\documentclass[11pt, reqno, a4paper]{amsart}
\usepackage{amsmath,amssymb,amsthm}
\usepackage{color}
\usepackage{comment}
\usepackage[utf8]{inputenc}
%
\topmargin       -0.0 cm   
\textwidth       17.5 cm \textheight      23.0 cm
\oddsidemargin    -1 cm   
\evensidemargin   -1 cm   
%
\pagestyle{plain}
\theoremstyle{definition}
 \newtheorem{dfn}{Definition}
 \newtheorem{remark}[dfn]{Remark}

\theoremstyle{plain}
 \newtheorem{thm}[dfn]{Theorem}
 \newtheorem{prop}[dfn]{Proposition}
 \newtheorem{lem}[dfn]{Lemma}
 \newtheorem{cor}[dfn]{Corollary}
 \newtheorem{assump}[dfn]{Assumption}

\numberwithin{equation}{section}
\newcommand{\bn}{{\bold n}}

\newcommand{\bm}{{\bold m}}
\newcommand{\bu}{{\bold u}}
\newcommand{\bz}{{\bold z}}
\newcommand{\bv}{{\bold v}}
\newcommand{\bw}{{\bold w}}

\newcommand{\bff}{{\bold f}}

\newcommand{\bG}{{\bold G}}

\newcommand{\bI}{{\bold I}}

\newcommand{\dv}{{\rm div}\,}

\newcommand{\BA}{{\Bbb A}}

\newcommand{\BR}{{\Bbb R}}
\newcommand{\BC}{{\Bbb C}}
\newcommand{\BM}{{\Bbb M}}
\newcommand{\BN}{{\Bbb N}}

\newcommand{\BI}{{\Bbb I}}

\newcommand{\BK}{{\Bbb K}}

\newcommand{\BZ}{{\Bbb Z}}

\newcommand{\CA}{{\mathcal A}}
\newcommand{\CB}{{\mathcal B}}
\newcommand{\CC}{{\mathcal C}}
\newcommand{\CD}{{\mathcal D}}

\newcommand{\CF}{{\mathcal F}}

\newcommand{\CL}{{\mathcal L}}
\newcommand{\CM}{{\mathcal M}}
\newcommand{\CN}{{\mathcal N}}

\newcommand{\CR}{{\mathcal R}}
\newcommand{\CS}{{\mathcal S}}
\newcommand{\CT}{{\mathcal T}}
\newcommand{\CH}{{\mathcal H}}

\newcommand{\CP}{{\mathcal P}}

\newcommand{\CU}{{\mathcal U}}

\newcommand{\bg}{{\bold g}}
\newcommand{\bh}{{\bold h}}
\newcommand{\pd}{\partial}

\newcommand{\supp}{{\rm supp\,}}
\newcommand{\HS}{\BR^N_+}

\renewcommand{\d}{{\rm d}}

\begin{document}
\title[]{$L_1$ approach to  the compressible viscous fluid flows 
in general domains}

\author[]{Jou-Chun Kuo}
\address{(J.-C. Kuo) School of Science and Engineering,
Waseda University \\
3-4-5 Ohkubo Shinjuku-ku, Tokyo, 169-8555, Japan }		
\email{kuojouchun@asagi.waseda.jp} 
\author[]{Yoshihiro Shibata}
\address{(Y. Shibata) 
{ Professor Emeritus of Waseda University \\
3-4-5 Ohkubo Shinjuku-ku, Tokyo, 169-8555, Japan. \\
Adjunct faculty member in the Department of Mechanical Engineering 
and	Materials Science, University of Pittsburgh, USA
}}
\email{yshibata325@gmail.com}

\thanks{{MSC Classification of 2020: Primary: 76N10; Secondary: 35Q30. }
\\
\quad \enskip
The first author is supported by JST SPRING, 
Grant Number JPMJSP2128.  The second author is  partially supported by JSPS KAKENHI 
Grant Number 22H01134}

\keywords{Navier--Stokes equations,  
maximal $L_1$-regularity, general domains.}

\date{\today}   

\maketitle


\begin{abstract}
This paper is concerned with the $L_1$ in time $B^s_{q,1}$ in space maximal regularity
for the Stokes equations obtained by linearization procedure of the Navier-Stokes
equations describing the viscous compressible fluid motion. Our main tool of deriving
this maximal regularity is based on the spectral analysis of the corresponding resolvent problem
for the Stokes operators.  An applications of our theorem is to prove the
local well-posedness of the Navier-Stokes equations with non-slip boundary 
conditions in uniform $C^3$ domains, whose boundary is compact. This is an 
extension of results due to Danchin-Tolksdorf \cite{DT22}, where  
the boundedness of the domain is assumed.  
In this paper, we assume that the boundary of the domain is compact, namely,
not only bounded domains but also exterior domains are considered.  Our approach 
of this paper is based  on the spectral analysis of Lam\'e equations, while 
 the method in \cite{DT22} is an extension of 
a result due to Da Prato-Grisvard \cite{DG75}. Our method developed in this
paper has applications to extensive system of parabolic and hyperbolic-parabolic 
equations with non-homogeneous boundary conditions. 
\end{abstract}
\tableofcontents
\section{Introduction} \label{sec:1}

Let $\Omega$ be a domain in the $N$ dimensional Euclidean space $\BR^N$,
whose boundary $\pd\Omega$ is a $C^3$ compact hypersurface.  In partucular,
$\Omega$ is a bounded domain or an exterior domain. 
In this paper, we consider the Navier-Stokes equations describing the 
viscous compressible fluid motion with homogeneous Dirichlet boundary conditions, 
which read as 
\begin{equation}
\label{Eq:Nonlinear}
\left\{\begin{aligned}
\pd_t\varrho + \dv(\varrho\bv)= 0&&\quad&\text{in $\Omega\times(0, T)$}, \\
\varrho(\pd_t\bv+(\bv \cdot \nabla)\bv)-
\mu\Delta\bv-(\mu+\nu)\nabla\dv \bv+\nabla P(\varrho)=0
&&\quad&\text{in $\Omega\times(0, T)$},\\
\bv=0&&\quad&\text{on $\pd\Omega\times(0, T)$},\\
(\varrho,\bv)(0,x)=(\varrho_0,\bv_0)&&\quad&\text{in $\Omega$},
\end{aligned}\right.
\end{equation} 
and its 
linearized system  called here
 the generalized Stokes equations, which reads as 
\begin{equation}
\label{Eq:Linear}
\left\{\begin{aligned}
\pd_t \rho + \eta_0\dv\bv  = F& &\quad &\text{in $\Omega\times(0, T)$}, \\
\eta_0\pd_t\bv - \alpha\Delta\bv - \beta\nabla\dv\bv + \nabla(P'(\eta_0)\rho)
= \bG &&\quad &\text{in $\Omega\times(0, T)$},\\
\bv  = 0 &&\quad &\text{on $\pd\Omega\times(0, \infty)$},\\
(\rho,\bv)(0,x)  = (\rho_0,\bv_0) &&\quad &\text{in $\Omega$}.
\end{aligned}\right.
\end{equation} 
Here, $\rho$ and $\bv=(v_1,\cdots,v_N)$ are unkown functions, 
while the initial datum $(\rho_0, \bu_0)$ is assumed to be given.
In \eqref{Eq:Linear}, the right member $F$ and $\bG$ are also given functions. 
The coefficients $\mu$ and $\nu$ in \eqref{Eq:Nonlinear} are assumed to satisfy
 the ellipticity conditions $\mu>0$ and $\mu+\nu>0$. 
The coefficients $\alpha$ and $\beta$  in \eqref{Eq:Linear} 
are also  assumed to be 
constants such that $\alpha > 0$ and $\alpha+\beta>0$. 
As discussed in \cite[Sec.8]{ES13}, the coefficients $\alpha$ and 
 $\beta$  are 
defined by $\alpha=\mu/\rho_*$ and $\beta=\nu/\rho_*$,  respectively. 
Here, the $\rho_*$ is a positive constant describing the mass density of
the reference body.
In \eqref{Eq:Linear}, the coefficient  $\eta_0$ 
is a given function of the form: $\eta_0(x)=\rho_* + \tilde\eta_0(x)$, 
which appears in the linearized procedure at the initial data $\rho_0(x)$ 
which is very close to $\eta_0$. The reason why we call equations \eqref{Eq:Linear}
generlaized is that the coefficient $\eta_0$ depends on $x\in \Omega$. 
 The pressure of the fluid is given by a smooth function 
$P=P(\rho)$ defined for $\rho \in (0, \infty)$ such that $P'(\rho) > 0$.
Throughout the paper, we assume that there exist two positive numbers
$\rho_1 < \rho_2$ such that there hold
\begin{equation}\label{july.2}
\rho_1 < \rho_*  < \rho_2, \quad \rho_1 < \eta_0(x) < \rho_2, \quad 
\rho_1 < P'(\rho_*) < \rho_2, \quad \rho_1 < P'(\eta_0(x))< \rho_2
\end{equation}
for $x \in \Omega$. 
\subsection{$L_1$ maximal regularity for generalized Stokes equations.}
Our main result for the  linear problem \eqref{Eq:Linear} is the following theorem. 
\begin{thm}\label{thm:main.1} 
\thetag1 If $\eta_0=\rho_*$, then $1 < q < \infty$ and $-1+1/q < s < 1/q$. 
\thetag2 If $\tilde\eta_0\not\equiv0$ and $\tilde\eta_0 \in B^{s+1}_{q,1}(\Omega)$,
 then $N-1 < q < 2N$ and $-1+N/q \leq s < 1/q$.
Assume that the conditions \eqref{july.2} holds. 
Let $T > 0$. Then, there exists a positive constant $\gamma_0$ such that 
for any initial data $(\rho_0, \bu_0) \in \CH^s_{q,1}(\Omega)$  and 
right members $F\in L_1((0, T), B^{s+1}_{q,1}(\Omega))$ and 
$\bG \in L_1((0, T), B^s_{q,1}(\Omega)^N)$, problem \eqref{Eq:Linear} admits
unique solutions $\rho$ and $\bu$ with 
$$\rho \in W^1_1((0, T), B^{s+1}_{q,1}(\Omega)), \quad
\bu \in L_1((0, T), B^{s+2}_{q,1}(\Omega)^N) \cap W^1_1((0, T), B^s_{q,1}(\Omega)^N)
$$
possessing the estimate:
\begin{align*}
&\|(\pd_t\rho, \rho)\|_{L_1((0, T), B^{s+1}_{q,1}(\Omega))}
+ \|\pd_t\bu\|_{L_1((0, T), B^s_{q,1}(\Omega))} 
+ \|\bu\|_{L_1((0, T), B^{s+2}_{q,1}(\Omega))} \\
&\quad \leq e^{\gamma T}(C\|(\rho_0, \bu_0)\|_{\CH^s_{q,1}(\Omega)} 
+ C(\rho_*, \|\tilde\eta_0\|_{B^{s+1}_{q,1}(\Omega)})
\|(F, \bG)\|_{L_1((0, T), \CH^s_{q,1}(\Omega)}).
\end{align*}
for any $\gamma \geq \gamma_0$.  Here, constants $C$ and 
$C(\rho_*, \|\tilde\eta_0\|_{B^{s+1}_{q,1}(\Omega)})$ are independent of $\gamma$
but depending on $\gamma_0$. 
\end{thm}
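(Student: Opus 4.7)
The plan is to first establish the theorem for the model problems in the whole space $\BR^N$ and the half space $\HS$ with the constant coefficient $\eta_0\equiv\rho_*$ (part (1)), then to promote to the variable-coefficient case on a general domain $\Omega$ (part (2)) by a perturbation argument combined with a finite partition of unity adapted to the $C^3$ compact boundary $\pd\Omega$. The core tool is a sharp analysis of the resolvent problem
\begin{equation*}
\lambda\rho + \rho_*\dv\bv = f,\qquad \rho_*\lambda\bv - \alpha\Delta\bv - \beta\nabla\dv\bv + P'(\rho_*)\nabla\rho = \bg,\qquad \bv|_{\pd\Omega}=0,
\end{equation*}
obtained by formally applying the Laplace transform in time. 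After eliminating $\rho = \lambda^{-1}(f-\rho_*\dv\bv)$ the system reduces to a perturbed Lam\'e resolvent equation for $\bv$ augmented by a non-local zeroth-order term proportional to $\lambda^{-1}\nabla\dv\bv$. In the whole space this is solved via the Fourier transform; in the half space the partial Fourier transform in the tangential variables, together with an explicit solution formula in the normal direction and a boundary reduction \`a la Solonnikov, provides the resolvent as a concrete family of parameter-dependent singular integral operators.

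\textbf{From resolvent bounds to $L_1$-in-time maximal regularity.} This is the step where the Besov endpoint index $1$ is essential and where my approach diverges from the Da Prato--Grisvard interpolation used in \cite{DT22}. Having established, for $\lambda\in\Sigma_{\theta,\gamma_0}=\{\lambda\in\BC : |\arg\lambda|<\pi-\theta,\ |\lambda|\geq\gamma_0\}$, resolvent estimates of parabolic type
\begin{equation*}
|\lambda|\,\|\bv\|_{B^{s}_{q,1}} + |\lambda|^{1/2}\|\nabla\bv\|_{B^{s}_{q,1}} + \|\nabla^2\bv\|_{B^{s}_{q,1}} + |\lambda|\,\|\rho\|_{B^{s+1}_{q,1}} \leq C\bigl(\|f\|_{B^{s+1}_{q,1}}+\|\bg\|_{B^{s}_{q,1}}\bigr),
\end{equation*}
I would invert the Laplace transform along a suitable sectorial contour to represent the semigroup $e^{-tA}$ acting on each Littlewood--Paley block $\Delta_j$. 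For each dyadic frequency $2^j$ the resolvent estimate localizes into an exponential decay at rate $\sim 2^{2j}$ for the parabolic component, giving $\int_0^\infty 2^{j(s+2)}\|\Delta_j e^{-tA}\bu_0\|_{L_q}\,dt \lesssim 2^{js}\|\Delta_j\bu_0\|_{L_q}$, and summing with the $\ell_1(\BZ)$ weight built into $B^{s+2}_{q,1}$ yields $\bu\in L_1((0,T),B^{s+2}_{q,1})$. Duhamel's formula converts an $L_1$-in-time forcing $\bG\in L_1((0,T),B^s_{q,1})$ into the same maximal regularity norm by the identical dyadic mechanism; the density $\rho$ is handled in parallel, gaining one derivative rather than two, reflecting the hyperbolic character of its equation.

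\textbf{Perturbation, localization, and main obstacle.} For part (2) I would write $\eta_0=\rho_*+\tilde\eta_0$ and move the terms $\tilde\eta_0\pd_t\bv$, $\tilde\eta_0\dv\bv$, and $\nabla((P'(\eta_0)-P'(\rho_*))\rho)$ to the right-hand side; the restriction $N-1<q<2N$ and $-1+N/q\leq s<1/q$ is precisely what turns $B^{s+1}_{q,1}(\Omega)$ into a Banach algebra acting as pointwise multipliers on both $B^s_{q,1}$ and $B^{s+1}_{q,1}$, so that the perturbation is controlled by $\|\tilde\eta_0\|_{B^{s+1}_{q,1}}$ and absorbed for $\gamma\geq\gamma_0$ large enough thanks to the prefactor $e^{\gamma T}$ coming from the shift $\lambda\mapsto\lambda+\gamma$ in the resolvent. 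Compactness of $\pd\Omega$ permits a finite cover by coordinate charts in which the boundary is straightened by $C^3$ diffeomorphisms, reducing the problem to the two model settings plus curvature-induced lower-order remainders handled within the same multiplier framework. I expect the main obstacle to lie in the spectral analysis near the boundary and near the imaginary axis: the compressible Lam\'e symbol splits into a parabolic and a (nearly) oscillatory branch, and tracking both branches uniformly through the dyadic summation while respecting the non-slip condition (where exact Fourier computation is unavailable and one must invert a Lopatinskii-type determinant in the Besov scale with explicit $\lambda$-dependence) is where the bulk of the technical labour will reside.
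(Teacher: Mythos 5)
There is a genuine gap at the heart of your argument, namely the passage from resolvent bounds to $L_1$-in-time regularity. You claim that the single-scale resolvent estimate $|\lambda|\|\bv\|_{B^s_{q,1}}+\dots\leq C(\|f\|_{B^{s+1}_{q,1}}+\|\bg\|_{B^s_{q,1}})$, combined with letting the semigroup act "on each Littlewood--Paley block $\Delta_j$" and using exponential decay at rate $2^{2j}$, yields the $L_1$ bound after summing in $j$. In a domain with the non-slip condition the semigroup does not commute with the Littlewood--Paley projections, so no frequency-localized decay $\|\Delta_j e^{-tA}\bu_0\|_{L_q}\lesssim e^{-c2^{2j}t}\|\Delta_j\bu_0\|_{L_q}$ is available; this mechanism only works in $\BR^N$ (or settings where boundary corrections can be explicitly frequency-localized), and it is precisely why Danchin--Tolksdorf resorted to Da Prato--Grisvard theory. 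Moreover, a resolvent estimate at a single regularity index never implies $L_1$ maximal regularity for an analytic semigroup on a fixed space. The paper's substitute is the family of ``$(s,\sigma,q,1)$ properties'': resolvent estimates at the shifted regularities $s\pm\sigma$ with gains $|\lambda|^{-\sigma/2}$ and $|\lambda|^{-(1-\sigma/2)}$, \emph{together with} estimates on $\pd_\lambda$ of the solution operator. These give, after deforming the Laplace inversion contour and integrating by parts in $\lambda$, the two pointwise-in-time decays $t^{-(1-\sigma/2)}\|g\|_{B^{s+\sigma}_{q,1}}$ and $t^{-(1+\sigma/2)}\|g\|_{B^{s-\sigma}_{q,1}}$, and the $L_1$ bound then follows from a dyadic decomposition \emph{in time} plus the real interpolations $\ell^1_1=(\ell^{1-\sigma/2}_\infty,\ell^{1+\sigma/2}_\infty)_{1/2,1}$ and $(B^{s+\sigma}_{q,1},B^{s-\sigma}_{q,1})_{1/2,1}=B^s_{q,1}$ (Propositions \ref{prop:L1} and \ref{prop:L2}). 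Your sketch contains neither the shifted-regularity estimates nor the $\pd_\lambda$ estimates, so the crucial step is missing.

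A secondary but also real problem is your treatment of the variable coefficient: in the resolvent formulation the term $\tilde\eta_0\pd_t\bv$ becomes $\tilde\eta_0\lambda\bv$, which is of the same order in $\lambda$ as the principal term, so it cannot be absorbed by taking $\gamma\geq\gamma_0$ large; condition \eqref{july.2} gives no smallness of $\tilde\eta_0$. The paper instead freezes the coefficient at points $x_i$ (and at infinity via $\psi_R$), exploits the smallness of $\|\tilde\varphi_{x_i,d_{x_i}}(\eta_0-\eta_0(x_i))\|_{B^{N/q}_{q,1}}$ on small balls and of $\|\psi_R\tilde\eta_0\|_{B^{N/q}_{q,1}}$ for large $R$, and inverts by Neumann series before patching with the finite partition of unity; some analogue of this localization-with-frozen-coefficients step is needed in your part (2) as well.
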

\begin{remark} In the theorem, 
$B^\nu_{q,p}(\Omega)$ denotes standard Besov spaces, whose definition
will be given in Subsection 2.2 below and $\CH^s_{q,1}(\Omega)
= B^{s+1}_{q,1}(\Omega)\times B^s_{q,1}(\Omega)$. 
Moreover,  $L_1((0, T), X)$ and $W^1_1((0, T), X)$ denote
the standard $X$-valued $L_1$ and $W^1_1$ spaces. 
\end{remark}
\subsection{The local well-posedness of the Navier-Stokes equations.}
\label{subsec.1.2}
To treat equations \eqref{Eq:Nonlinear}, according to Str\"ohmer \cite{St90}, we introduce
Lagrange transformation.  Let $\bv(x, t)$ be the velocity field in 
Eulaer coordinates $x=(x_1, \ldots x_N) \in \Omega$ and 
$x(y, t)$ be a solution of the Caucy problem:
$$\frac{dx}{dt} = \bv(x, t) \quad(t > 0),  \quad x|_{t=0} = y=(y_1, \ldots, y_N).$$
We go over Euler coordinates $x$ to Lagrange coordinates $y$, and then the connection 
between Euler coordinate and Lagrange coordinates can be written as 
\begin{equation}\label{lag.1}x = y + \int^t_0\bu(y, \tau)\,d\tau = X_{\bu}(y, t).
\end{equation}
We see that $\bu(y, t) = \bv(x, t) = \bv(X_{\bu}(y, t), t)$ and $(\pd_t + \bv\cdot\nabla)
\rho(x, t) = \pd_t\eta(y, t)$ with 
$\eta(y, t) = \rho(X_{\bu}(y, t), t)$. \par 
If we find a solution $\bu$ in $L_1((0, T), B^{s+2}_{q,1}(\Omega)) \cap 
W^1_1((0, T), B^s_{q,1}(\Omega))$ with $-1+N/q \leq s < 1/q$ and $N-1<q  < 2N$,
then the map $x= X_{\bu}(y, t)$ is $C^{1+\sigma}$diffeomorphism with some small $\sigma>0$.
Moreover, since the Jacobian matrix of transformation \eqref{lag.1}  is given by
\begin{equation*}\label{lag,.2}
\nabla_yX_{\bu}(y, t) = \BI + \int^t_0\nabla_y\bu(y, \tau)\,d\tau.
\end{equation*}
Thus, if  $\bu$ satisfies 
\begin{equation}
\label{lag.3}
\bigg\lVert\int_0^t\nabla\bu(\tau,\xi)\,d\tau\bigg\rVert_{L_\infty}\le c,
\end{equation} 
for some small constant $c>0$, 
then transformation \eqref{lag.1} 
 gives a $C^{1}$ one to one map. Moreover, using an idea due to
Str\"ohmer \cite{St89, St90},
we see that this map is a bijection from $\Omega$ onto $\Omega$ if 
$\bu|_{\pd\Omega}=0$. \par
Let 
$$\BA_{\bu}(y, t) = (\nabla_yX_{\bu}(y, t))^{-1}
= \sum_{\ell=0}^\infty\Bigl(-\int^t_0\nabla_y\bu(y. \tau)\,d\tau\Bigr)^\ell,
$$ 
and then $\nabla_x = \BA_\bu^\top \nabla_y$, where $A^\top$ denotes the 
transposed $A$.  From this formula, equations \eqref{Eq:Nonlinear} are transformed
into the following system of equations: 
\begin{equation}\label{ns:2}\left\{\begin{aligned}
\pd_t\eta+\eta\dv\bu = F(\eta, \bu)& 
&\quad&\text{in $\Omega\times(0, T)$}, \\
\eta\pd_t\bu - \alpha\Delta \bu  -\beta\nabla\dv\bu 
+  \nabla P(\eta)
 = 
 \bG(\eta, \bu)& &\quad&\text{in $\Omega\times(0, T)$}, \\
\bu|_{\pd\Omega} =0, \quad (\eta, \bu)|_{t=0} = (\rho_0, \bu_0)
& &\quad&\text{in $\Omega$}.
\end{aligned}\right.\end{equation}
Here, we have set 
\begin{align*}
F(\eta, \bu) & =  \rho((\BI-\BA_\bu):\nabla\bu) 
\\
\bG(\eta,  \bu)& = 
(\BI-(\BA_\bu^\top)^{-1})(\rho\pd_t\bu -\alpha\Delta\bu)  
 +\alpha(\BA_\bu^\top)^{-1}\dv((\BA_\bu\BA_\bu^\top-\BI):\nabla\bu)\\
&+\beta\nabla((\BA_\bu^\top-\BI):\nabla\bu).
\end{align*}
By Theorem \ref{thm:main.1},  we have the following local well-posedness of
 equations \eqref{ns:2}. 
\begin{thm}\label{thm:main.2} Let $N-1 < q < \infty$ and $-1+N/q \leq s < 1/q$. 
Let $\rho_*$, $\tilde\eta_0(x)$, and $\eta_0(x)$ be the same as in
Theorem \ref{thm:main.1}. 
Then, there exist constants ${\sigma_0}>0$ and $T>0$ such that 
for any initial data $\rho_0 \in B^{s+1}_{q,1}(\Omega)$
and $\bu_0 \in B^s_{q,1}(\Omega)^N$,   problem \eqref{ns:2} admits 
 unique solutions $\rho$ and $\bu$ satisfying
the regularity conditions:
\begin{equation}\label{solclass}
\eta-\rho_0 \in W^1_1((0, T), B^{s+1}_{q,1}(\Omega)), 
\quad 
\bu \in L_1((0, T), B^{s+2}_{q,1}(\Omega)^N) \cap 
W^1_1((0, T), B^s_{q,1}(\Omega)^N)
\end{equation}
provided that  $\|\rho_0 - \eta_0\|_{B^{s+1}_{q,1}(\Omega)}\leq {\sigma_0}$.
\end{thm}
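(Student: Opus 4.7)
The plan is to run a Banach fixed-point argument in the solution class \eqref{solclass}, using Theorem \ref{thm:main.1} as the linear building block and the smallness of $\rho_0-\eta_0$ together with a short time $T$ to absorb the nonlinear remainder.

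First I would center the density at $\eta_0$ by setting $\rho := \eta - \eta_0$ and expanding $P(\eta_0+\rho) = P(\eta_0) + P'(\eta_0)\rho + P_2(\eta_0,\rho)$ with $P_2$ quadratic in $\rho$. After rearrangement, \eqref{ns:2} takes the form of the linear system \eqref{Eq:Linear} for $(\rho,\bu)$ with source terms
\begin{align*}
\widetilde F(\rho,\bu) &= F(\eta_0+\rho,\bu) - \rho\,\dv\bu,\\
\widetilde{\bG}(\rho,\bu) &= \bG(\eta_0+\rho,\bu) - \rho\,\pd_t\bu - \nabla P_2(\eta_0,\rho) - \nabla P(\eta_0),
\end{align*}
and initial data $(\rho_0-\eta_0,\bu_0)\in\CH^s_{q,1}(\Omega)$ with $\|\rho_0-\eta_0\|_{B^{s+1}_{q,1}}\le\sigma_0$. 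For $T,R>0$ to be chosen, let $\CE_T$ be the affine space \eqref{solclass} (with $\rho$ in place of $\eta-\rho_0$) endowed with its natural norm $\|\cdot\|_T$, and let $\CB_R\subset\CE_T$ be the closed ball of radius $R$ consisting of pairs with the prescribed initial data. Given $(\bar\rho,\bar\bu)\in\CB_R$, Theorem \ref{thm:main.1} produces a unique $(\rho,\bu)=\Phi(\bar\rho,\bar\bu)$ in $\CE_T$ solving \eqref{Eq:Linear} with right-hand sides $\widetilde F(\bar\rho,\bar\bu)$ and $\widetilde{\bG}(\bar\rho,\bar\bu)$.

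The core task is to establish, for $(\bar\rho,\bar\bu)\in\CB_R$ with $R$ small,
\begin{equation*}
\|(\widetilde F,\widetilde{\bG})(\bar\rho,\bar\bu)\|_{L_1((0,T),\CH^s_{q,1})} \le CT\|\nabla P(\eta_0)\|_{B^s_{q,1}} + \omega(T,R)\,\|(\bar\rho,\bar\bu)\|_T,
\end{equation*}
with $\omega(T,R)\to 0$ as $(T,R)\to 0$, together with an analogous Lipschitz estimate for differences. The key ingredients are: the algebra and product rules in $B^{s+1}_{q,1}(\Omega)$, valid because $s+1\ge N/q$ and $q<\infty$; composition bounds for the smooth function $P$ and for the Neumann series $\BA_{\bar\bu} = \sum_{\ell\ge 0}\bigl(-\int_0^t\nabla\bar\bu\,d\tau\bigr)^\ell$, whose convergence and the smallness of $\BI-\BA_{\bar\bu}$ follow from the embedding $W^1_1((0,T),B^s_{q,1})\cap L_1((0,T),B^{s+2}_{q,1}) \hookrightarrow C([0,T],B^{s+1}_{q,1}) \hookrightarrow C([0,T],W^1_\infty)$ and from \eqref{lag.3}; and the trace inequality $\|\bar\rho\|_{L_\infty((0,T),B^{s+1}_{q,1})}\le \|\rho_0-\eta_0\|_{B^{s+1}_{q,1}} + \|\pd_t\bar\rho\|_{L_1((0,T),B^{s+1}_{q,1})}$, used to handle cubic-looking products like $\bar\rho\,\pd_t\bar\bu$ via an $L_\infty\cdot L_1$ pairing. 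Inserting this estimate into Theorem \ref{thm:main.1} with $\gamma=\gamma_0$ and choosing first $R$ of order $\sigma_0 + \|\bu_0\|_{B^s_{q,1}} + 1$ and then $T$ small enough that $e^{\gamma_0 T}\le 2$, $CT\|\nabla P(\eta_0)\|_{B^s_{q,1}}\le R/4$, and $C\omega(T,R)\le 1/4$ yields both the self-mapping $\Phi(\CB_R)\subset\CB_R$ and a $\tfrac12$-contraction property. Banach's theorem then delivers the unique fixed point, and uniqueness in the full class \eqref{solclass} follows from the same Lipschitz bound applied to any two solutions.

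The main obstacle is the nonlinear estimate at the critical endpoint $s=-1+N/q$, where $B^{s+1}_{q,1}=B^{N/q}_{q,1}$ is only borderline an algebra. Every product and composition has to be carried out at this endpoint, and the small factor must come either from $R$ or from extracting a $T$ through the time integral defining $\BA_{\bar\bu}$; in particular, controlling $(\BI-\BA_{\bar\bu}^\top)\alpha\Delta\bar\bu$ in $L_1((0,T),B^s_{q,1})$ requires pairing a $C([0,T],B^{s+1}_{q,1})$ factor---whose smallness reflects the time integral---with the $L_1((0,T),B^{s+2}_{q,1})$ factor, and this is precisely where the parameter ranges inherited from Theorem \ref{thm:main.1} enter.
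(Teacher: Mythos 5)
Your overall strategy (rewrite \eqref{ns:2} as the linear system \eqref{Eq:Linear} with $\rho=\eta-\eta_0$, put $\rho\,\pd_t\bu$, $\rho\dv\bu$, $\nabla P_2$, $\nabla P(\eta_0)$ and the $\BA_{\bu}$-terms on the right, and contract in the maximal-$L_1$ class) is the expected one; the paper itself gives no details and simply defers to the half-space argument of \cite{KSprep}. However, as written your fixed-point scheme has a genuine gap at the step where you close the self-map and contraction for \emph{large} $\bu_0$. You take the ball $\CB_R$ centered (in effect) at the initial data with $R\sim \sigma_0+\|\bu_0\|_{B^s_{q,1}}+1$ and claim a bound $\omega(T,R)\|(\bar\rho,\bar\bu)\|_T$ with $\omega(T,R)\to0$; but for the genuinely quadratic terms, e.g. $\bar\rho\dv\bar\bu$, $\bar\rho\,\pd_t\bar\bu$, and $(\BI-\BA^\top_{\bar\bu})\Delta\bar\bu$, the only available bound is of the form $\bigl(\sigma_0+\|\pd_t\bar\rho\|_{L_1B^{s+1}_{q,1}}\ \text{or}\ \|\bar\bu\|_{L_1B^{s+2}_{q,1}}\bigr)\cdot R\lesssim (\sigma_0+R)R$, and none of these factors tends to $0$ as $T\to0$ \emph{uniformly over the ball}: a member of $\CB_R$ may concentrate all of its $L_1$-in-time mass near $t=0$, so no factor of $T$ can be extracted. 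Hence $C\omega(T,R)\le 1/4$ forces $R\lesssim 1$, i.e. smallness of $\bu_0$, contradicting the statement, and the same issue already obstructs the verification of \eqref{lag.3} (convergence of the Neumann series for $\BA_{\bar\bu}$) on the ball. The missing idea is the standard one in this $L_1$ critical framework (cf. \cite{DM15,DT22,KSprep}): first solve the linear problem \eqref{Eq:Linear} with data $(\rho_0-\eta_0,\bu_0)$ (and the time-independent force $-\nabla P(\eta_0)$), obtaining a \emph{fixed} function $(\rho_L,\bu_L)$ for which $\|\bu_L\|_{L_1((0,T),B^{s+2}_{q,1})}$, $\|\pd_t\rho_L\|_{L_1((0,T),B^{s+1}_{q,1})}\to0$ as $T\to0$ by absolute continuity of the integral, and then run the contraction for the \emph{perturbation} in a small ball centered at $(\rho_L,\bu_L)$; only then are the quadratic terms uniformly small on the ball for small $T$ and small $\sigma_0$, and the argument closes for arbitrary $\bu_0$.

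Two smaller corrections: the embedding you invoke, $W^1_1((0,T),B^s_{q,1})\cap L_1((0,T),B^{s+2}_{q,1})\hookrightarrow C([0,T],B^{s+1}_{q,1})\hookrightarrow C([0,T],W^1_\infty(\Omega))$, is false (the $p=1$ maximal regularity class only embeds into $C([0,T],B^s_{q,1})$, and $B^{s+1}_{q,1}\not\hookrightarrow W^1_\infty$ since $s<1/q\le N/q$); what you actually need is the elementary bound $\bigl\|\int_0^t\nabla\bar\bu\,d\tau\bigr\|_{L_\infty((0,T),B^{s+1}_{q,1})}\le \|\bar\bu\|_{L_1((0,T),B^{s+2}_{q,1})}$ combined with $B^{s+1}_{q,1}\hookrightarrow L_\infty$ (using $s+1\ge N/q$), which suffices for $\BA_{\bar\bu}$ and for the product estimates via Lemma \ref{lem:prod}. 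Also note that Theorem \ref{thm:main.1} in the nonconstant-coefficient case requires $N-1<q<2N$, so the range $q<\infty$ in the statement must in practice be used together with that restriction when you invoke the linear theorem.
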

\begin{proof} We can prove the theorem employing the same 
argument as in Kuo-Shibata \cite{KSprep} 
replacing the half space with $\Omega$, and so we may omit the proof.
\end{proof}
\begin{cor}\label{cor:main.2} Let $N-1 < q < \infty$ and $-1+N/q \leq s < 1/q$. 
Let $\rho_*$, $\tilde\eta_0(x)$, and $\eta_0(x)$ be the same as in
Theorem \ref{thm:main.1}. 
Then, there exist constants ${\sigma_0}>0$ and $T>0$ such that 
for any initial data $\rho_0 \in B^{s+1}_{q,1}(\Omega)$
and $\bu_0 \in B^s_{q,1}(\Omega)^N$,   problem \eqref{Eq:Nonlinear} admits 
 unique solutions $\rho$ and $\bu$ satisfying
the regularity conditions:
\begin{equation}\label{solclass*}\begin{aligned}
\rho-\rho_0 &\in W^1_1((0, T), B^s_{q,1}(\Omega)) \cap L_1((0, T), B^{s+1}_{q,1}(\Omega)),
\\
\bv &\in  L_1((0, T), B^{s+2}_{q,1}(\Omega)^N) \cap 
W^1_1((0, T), B^s_{q,1}(\Omega)^N)
\end{aligned}\end{equation}
provided that $\|\eta_0-\rho_0\|_{B^{s+1}_{q,1}(\Omega)} \leq \sigma_0$.
\end{cor}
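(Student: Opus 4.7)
The plan is to deduce the Eulerian statement directly from Theorem~\ref{thm:main.2} by inverting the Lagrangian transformation \eqref{lag.1}. I would first apply Theorem~\ref{thm:main.2} to the given initial data $(\rho_0,\bu_0)$ to produce a unique pair $(\eta,\bu)$ in the regularity class \eqref{solclass} solving \eqref{ns:2} on some interval $(0,T)$. By shrinking $T$ (and possibly $\sigma_0$) and using that, for $-1+N/q\le s<1/q$ and $N-1<q$, the embedding $B^{s+1}_{q,1}(\Omega)\hookrightarrow L_\infty(\Omega)$ and $B^{s+2}_{q,1}(\Omega)\hookrightarrow W^1_\infty(\Omega)$ hold, I would show that the quantity $\int_0^t\|\nabla\bu(\tau,\cdot)\|_{L_\infty}\,d\tau$ is as small as we wish, so that the smallness condition \eqref{lag.3} is met. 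This makes $X_{\bu}(\cdot,t)$ a $C^1$ diffeomorphism of $\BR^N$; together with the no-slip condition $\bu|_{\pd\Omega}=0$ and Str\"ohmer's argument \cite{St89,St90}, the image of $\Omega$ under $X_{\bu}(\cdot,t)$ equals $\Omega$ for every $t\in[0,T]$.

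Next I would set
$$\bv(x,t):=\bu(X_{\bu}^{-1}(x,t),t),\qquad
\varrho(x,t):=\eta(X_{\bu}^{-1}(x,t),t),$$
and verify, by a direct computation using $\nabla_x=\BA_{\bu}^\top\nabla_y$ and the definition of $F(\eta,\bu)$, $\bG(\eta,\bu)$, that $(\varrho,\bv)$ satisfies \eqref{Eq:Nonlinear} classically in the sense of distributions, with $\bv|_{\pd\Omega}=0$ and $(\varrho,\bv)|_{t=0}=(\rho_0,\bu_0)$. For the regularity class \eqref{solclass*}, the task is to transfer the estimates for $(\eta,\bu)$ in $B^{s+k}_{q,1}(\Omega)$-norms through the change of variables $y\mapsto X_{\bu}^{-1}(x,t)$. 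The main obstacle here is exactly this: composition with a $C^{1+\sigma}$ (but not better) diffeomorphism need not preserve $B^{s+2}_{q,1}$ for arbitrary $s$. I would handle it by writing $X_{\bu}^{-1}=\mathrm{id}+\bY(\cdot,t)$ where $\bY$ inherits the regularity of $\int_0^t\bu\,d\tau$, and then use paramultiplication and the composition/substitution estimates in Besov spaces in the regime $-1+N/q\le s<1/q$ (so that $B^{s+1}_{q,1}$ is a multiplication algebra) to estimate $\bv$ and $\varrho$. From $\varrho-\rho_0=(\eta-\rho_0)\circ X_{\bu}^{-1}+(\rho_0\circ X_{\bu}^{-1}-\rho_0)$, the first summand gives the $W^1_1((0,T);B^s_{q,1})\cap L_1((0,T);B^{s+1}_{q,1})$ regularity from \eqref{solclass} plus the composition estimate, and the second is controlled via the fundamental theorem of calculus along the flow.

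Finally, uniqueness of $(\varrho,\bv)$ follows by running the same transformation in reverse: any solution in the class \eqref{solclass*} that is close to the equilibrium yields, via its own flow, a solution of \eqref{ns:2} in the class \eqref{solclass}, to which the uniqueness part of Theorem~\ref{thm:main.2} applies. I expect the composition-in-Besov-spaces step to be the only real technical point, but since the threshold $s\ge -1+N/q$ places us in the algebra regime and the flow is only an $O(T)$-perturbation of the identity, the required bounds reduce to standard tame estimates.
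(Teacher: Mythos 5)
Your proposal is correct and follows essentially the same route as the paper: apply Theorem \ref{thm:main.2} in Lagrangian coordinates, use the smallness of $\int_0^t\|\nabla\bu\|_{L_\infty}\,d\tau$ and Str\"ohmer's argument to make $X_{\bu}(\cdot,t)$ a $C^1$ diffeomorphism of $\Omega$ onto itself, and transfer the Besov regularity back to Eulerian variables via a composition estimate of the form $\|F\circ X_{\bu}^{-1}\|_{B^s_{q,1}(\Omega)}\le C\|F\|_{B^s_{q,1}(\Omega)}$ (which the paper obtains by citing the discussion in \cite{DHMT}, Section 8.3, rather than re-deriving it by paraproducts). The extra details you supply (the decomposition $X_{\bu}^{-1}=\mathrm{id}+\bY$ and the reverse-transformation uniqueness argument) are consistent refinements of the same proof.
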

\begin{proof}  From \eqref{solclass}, we see that
$\bu \in L_1 ((0, T), \mathrm{BC}^1 (\overline{\HS})^d)$,
because $B^{N/q}_{q,1}(\Omega)$ is continuously imbedded into $L_\infty(\Omega)$.
As already mentioned, using a similar argument as in \cite{St89, St90}, we see that
$x=X_{\bu} (y, , t)$ is a $C^1$-diffeomorphism from $\Omega$ onto $\Omega$ 
for every $t \in [0, T)$ if \eqref{lag.3} holds.   
\par
For any function $F \in B^s_{q, 1} (\Omega)$, $1 < q < \infty$, 
$- \min (d \slash q, d \slash q') < s \le d \slash q$,
it follows from the chain rule (and the transformation 
rule for integrals) that
\begin{equation*}
\label{norm-equivalence}
\lVert F \circ X_{\bu}^{- 1} \rVert_{\CB^s_{q, 1} (\Omega)}
\le C \lVert F \rVert_{\CB^s_{q, 1} (\Omega)}
\end{equation*}
with a constant $C > 0$. This fact may be proved along the
same way as in the discussion given in Section 8.3 in \cite{DHMT}.
Thus, using Theorem \ref{thm:main.2}, we see that the original equations \eqref{Eq:Nonlinear}
admit solutions $\rho$ and $\bv$ possessing the estimate \eqref{solclass*}.
\end{proof}
\begin{remark}
R. Danchin and R. Tolksdorf \cite{DT22} proved the local and global well-posedness 
of equations \eqref{Eq:Nonlinear} 
in the $L_1$ in time and $B^{N/q}_{q,1} \times B^{N/q -1}_{q,1}$ in space 
maximal regularity framework for some $q \in (2, min(4, 2N/(N-2))$  
under the assumption that  the fluid domain $\Omega$ is bounded.  This assumption
is necessary to use 
 the Da Prato - Grisvard theory \cite{DG75}. Moreover,   
 they consider only the case where $s=N/q-1$ for their local well-posedness. 
Thus, Corollary \eqref{cor:main.2} is an extension of the result of the local
wellposedness by Danchin and Tolksdorf \cite{DT22}. \par
Our method to obtain the $L_1$ maximal regularity is completely different from
\cite{DG75, DT22}.  What is necessary for us  to obtain $L_1$ integrability
is spectral analysis.   It can be seen from 
Propositions \ref{prop:L1} and \ref{prop:L2}
in Sect. 3 below. Thus, 
the spectral properties of solutions to
equatons \eqref{Eq:Linear} play essential role and are  derived from the spectral properties 
of solutions to the Lam\'e equations, which read as
 \begin{equation}\label{Eq:Lame}\begin{aligned}
\eta_0\lambda\bv - \alpha\Delta \bv - \beta\nabla\dv\bv = f&&\quad\text{in 
$\Omega$}, \quad \bv|_{\pd\Omega}=0.
\end{aligned}\end{equation}
 Sect. 4  is devoted to driving the spectral properties 
of solutions to \eqref{Eq:Lame} . 
  \par
Since the global well-posedness for small initial data has been proved by \cite{DT22}
in the bounded domain case, we do not study the same problem in this paper. 
Concerning the global well-posedness for small initial data in
exterior domains,  we are interested in extending the result due to 
the second author \cite{S22}  in the $L_p$-$L_q$ framework ($1 < p, q < \infty$) 
to the $L_1$ in time maximal regularity framework. But, this is a future work. 
\end{remark}
\begin{remark}
Our essential assumption for domains 
is that the boundary is compact.  If we can prove that 
$$\sum_{\ell \in\BN} \|\varphi_j \bu\|_{B^s_{q,1}(\Omega)}^q 
\leq C\|\bu\|_{B^s_{q,1}(\Omega)}^q
$$
for some partition of unity $\{\varphi_j\}_{j\in \BN}$ in $\Omega$, we can treat the 
case where the boundary is non-compact.  This inequality holds for 
$L_q(\Omega)$. 
\end{remark}
\subsection{Short History}
Mathematical studies on the compressible Navier-Stokes equations started with 
the uniqueness results in a bounded domain by Graffi \cite{G53}, whose result is extended by Serrin \cite{S59}
in the sense that there is no assumption on the equation of state of the fluid. 
In the studies \cite{G53} and \cite{S59}, the fluid occupies a bounded domain surrounded by a smooth boundary.
A local in time existence theorem in H\"older continuous spaces was first proved by
Nash \cite{N62}, Itaya~\cite{I71,I76a}, and Vol'pert and Hudjaev \cite{VH}
 independently, for the whole space case.
As for the boundary value problem case, Tani~\cite{T77} proved a local in time existence theorem 
in a similar setting provided that a (bounded or unbounded) domain $\Omega$ has a smooth boundary.
In Sobolev-Slobodetskii spaces, the local existence was shown by Solonnikov \cite{Sol},
see also the work due to Danchin \cite{D00, D10} for an improvement of Solonnikov's result.
Matsumura and Nishida \cite{MN80, MN83} made a breakthrough
in proving the global well-posedness for small initial data using the energy method. 
This result was extended to the optimal resularity of initial data in the $L_2$ space 
 by Kawashita \cite{K}.  Kobayashi and Shibata \cite{KS1} improved the decay properties
of solutions in the exterior domains combining the energy method and $L_p$-$L_q$ decay 
properties of solutions to the linearized equations, where the condition:
$1 < p \leq 2 \leq q \leq \infty$ is assumed.  In the  no restrictions of
exponents case, 
so called the diffusion wave properties has been studied by Hopf-Zumbrun \cite{HZ}
and Liu and Wang \cite{LW}.  Kobayashi and Shibata \cite{KS1, KS2}  improved results
due to  \cite{HZ, LW}.  
On the basis of a different approach, Mucha and Zajaczkowski \cite{MZ} applied 
$L_p$-energy estimates to show the global existence theorem in the $L_p$ framework.

In the half space case, the decay properties were studied by Kagei-Kobayashi \cite{KK02, 
KK05}. 
The global well-posedness results were extensively studied  in the energy spaces of exterior
domains by \cite{SE18, ST04, VZ86, WT} 
and in the critical space of the whole space by \cite{AP07, CMZ,  CD10, DX, H11, H, LZ, O}. 
Valli \cite{Vali83} and Tsuda \cite{Tsuda16}
studied time periodic solutions in the $L_2$ framework for the bounded domains 
and for $\BR^N$, respectively.


Str\"ohmer \cite{St90} introduced  Lagrangian coordinates to rewrite the system
of equations \eqref{Eq:Nonlinear}. 
Thanks to this reformulation (see Subsec. 1.3), the convection term in the density equation, 
namely $\varrho\cdot\nabla\bv$, 
may be dropped off, so that the transformed system 
becomes the evolution equation of parabolic type, so called the Stokes system, 
and he used the semigroup theory to treat the Stokes system
in the $L_2$ framework.   Developing this research, 
the second author and Enomoto \cite{SE18} 
and the second author \cite{S22} used the $L_p$-$L_q$ maximal regularity for 
the Stokes system and they proved local well-posedness for any initial 
data and the global well-posedness for small initial data, where 
the class of initial data are $(\varrho_0-\rho_*,\bv) \in
B^{2(1-1/p)+1}_{q,p}\times B^{2(1-1/p)}_{q,p}$. \par

The $L_1$ in time maximal regularity approach to the Navier-Stokes 
equations was started by Danchin and Mucha \cite{DM15} 
  for the incompressible 
viscous fluid flows with non-slip conditions.
  Recently,  the global wellposedness
for the small initial data for 
the free boundary problem of the Navier-Stokes equations for the viscous
incompressible
fluid flow was investigated
by \cite{DHMT}, \cite{OS}, and \cite{SW1} in the half-space by using
the $L_1$ in time and $\dot B^s_{q,1}$ in space maximal regularity. 
As we already mentioned, 
for the Navier-Stokes equations describing the viscous compressible fluid motion 
\eqref{Eq:Nonlinear},
the $L_1$ in time  and $B^s_{q,1}$ in space maximal regularity
approach was first  investigated by Danchin-Torksdorf \cite{DT22} 
under the assumption that 
 the fluid domain is bounded, 
which is required to prove
their extension version of Da Prato-Grisvard theory \cite{DG75}. 
In this paper, we establish the  $L_1$ in time and 
$B^s_{q,1}$ space maximal regularity theorems for  equations
\eqref{Eq:Linear}, cf. Theorem \ref{thm:main.1},  and the local well-posedness of 
non-linear problem \eqref{Eq:Nonlinear} in exterior domains, cf.. Theorem \ref{thm:main.2}. 
Our method to prove $L_1$ integrability is given in Section 3,  which has been  
 investigated by \cite{S23} based on the spectral analysis.
 Our method can be used widely to 
show the $L_1$ maximal regularity for parabolic or hyperbolic-parabolic systme of 
equations with non-homogeneous boundar conditions. For example, the second author and 
Watanabe \cite{SW1} proved 
the $L_1$ maximal regularity for the Stokes equations with free boundary conditions
 by using the spectral analysis of solutions to the generalized resolvent problem and 
Proposition \ref{prop:L1} in Sect. 3 below. \par
\subsection{Why is the $L_1$ approach important ?}  If we use  Lagrange transformation
following  Str\"ohmer \cite{St89, St90}, then we have to require that 
the Jacobian of Lagrange transformation 
$I + \int_0^t\nabla\bu(y, \tau)\,d\tau$ is invertible, 
where $\bu(y, \tau)$ stands for the velocity field of a fluid particle at time $t$
which was located in $y$ at initial time $t=0$. 
Hence,  it is always crucial to get a control of 
$\int_0^t \nabla\bu(y, \tau)\, d\tau$ in a suitable norm.  
 In particular, it is necessary to 
find a small constant $c>0$ such that \eqref{lag.3} holds, 
which ensures that Lagrangian transformation is invertible. 

Moreover, in view of time trace,  if
the velocity field $\bu$ belongs to the maximal regularity class
$L_p((0, T), W^2_q(\Omega)^N) \cap W^1_p((0, T), L_q(\Omega)^N)$, then 
$\bu|_{t=0} \in B^{2(1-1/p)}_{q,p}(\Omega)^N$.  Thus, $p=1$ gives the minimal
regularity for the initial data.
From these points of view, it is worth while investigating
 the $L_1$ maximal regularity theorem
with $B^s_{q,1}(\Omega)$ in space with $-1+N/q \leq s < 1/q$ and 
$N-1 < q < 2N$.  These constraints for $q$ and $s$ are unavoidable 
and essentially depends on estimates of the product of 
functions using Besov norms obtained by Abidi and Paicu \cite{AP07}. In fact, 
if $\eta_0$ is a constant, then we can relax the condition that 
$1 < q < \infty$ and $-1+1/q < s < 1/q$ for the linear theory. 
\par 
\section{Preparations for latter sections}

\subsection{Symbols used throughout the paper}\label{sec:2}
Let us fix the symbols used in this paper. Let $\BR$, $\BN$, and $\BC$ be the set of all real, natural, 
complex numbers, respectively, while let $\BZ$ be the set of all integers. Moreover, 
$\mathbb K$ stands for either $\BR$ or $\BC$. 
Set $\BN_0 := \BN \cup \{0\}$. For multi-index $\kappa=(\kappa_1, \ldots, \kappa_n)
\in \BN_0^n$ 
and $x=(x_1, \ldots, x_n) \in \BR^n$, $\pd^\alpha = \pd_x^\alpha = 
\pd^{|\alpha|}/\pd x_1^{\alpha_1}\cdots\pd x_N^{\alpha_N}$ stands for 
standard partial derivatives of order $\alpha$, 
where $|\alpha| = \alpha_1+\cdots+\alpha_N$.  For the dual variable 
$\xi=(\xi_1, \ldots, \xi_n)\in \BR^n$, $D^\kappa_\xi = \pd^{|\kappa|}/\pd^{\kappa_1}\xi_1
\cdots \pd^{\kappa_n}\xi_n$. For differentiations, we also use symbols
$\nabla f= \{\pd^\kappa f \mid |\kappa|=1\}, \bar\nabla f = \{\pd^\kappa f \mid
|\kappa| \leq 1\}$, $\nabla^2f = \{\pd^\kappa f \mid |\kappa|=2\}$, 
$\bar\nabla^2f = \{\pd^\kappa f \mid |\kappa| \leq 2\}$. \par 
For $\epsilon \in (0, \pi/2)$ and $\lambda_0>0$, we define parabolic
sectors $\Sigma_\epsilon$ and $\Sigma_{\epsilon, \lambda_0}$ by
\begin{equation*}\label{sector}
\Sigma_\epsilon = \{\lambda \in \BC\setminus\{0\} \mid |\arg\lambda| 
\leq \pi-\epsilon\}, \quad
\Sigma_{\epsilon, \lambda_0} = \{\lambda \in \Sigma_\epsilon \mid 
|\lambda| \geq \lambda_0\}.
\end{equation*}
Let $\HS$ and $\pd\HS$ denote the half space and its boundary defined by
$$\HS = \{x=(x_1, \ldots, x_N) \in \BR^N \mid x_N > 0\},\quad
\pd\HS = \{x=(x_1, \ldots, x_N) \in \BR^N \mid x_N=0\}.$$
For $N\in \BN$ and a Banach space $X$ on $\BK$, let $\CS(\BR^N;X)$ be the Schwartz class of 
$X$-valued rapidly decreasing functions on $\BR^N$. We denote $\mathcal{S'}(\BR^N;X)$ by the space of $X$-valued tempered distributions, which means the set of all 
continuous linear mappings from $\CS(\BR^N)$ to  $X$.  For $N\in \BN$, 
we define the Fourier transform $f \mapsto \CF [f]$ from $\CS(\BR^N;X)$ onto itself and its inverse  as
\begin{equation*}
\CF [f](\xi):=\int_{\BR^{N}}f(x) e^{-ix\cdot\xi }\,dx, \qquad 
\CF_{\xi}^{-1} [g] (x) :=\frac{1}{(2\pi)^N}\int_{\BR^{N}}g(\xi) e^{ix\cdot\xi }\,d\xi,
\end{equation*}
respectively. In addition, we define the partial Fourier transform
$\CF'[f(\,\cdot\,, x_N)] = \hat f(\xi', x_N)$ 
and partial inverse Fourier transform $\CF^{-1}_{\xi'}$ by
\begin{align*}
\CF'[f(\,\cdot\,, x_N)] (\xi') &:= \hat f(\xi', x_N) = \int_{\BR^{N-1}} f(x', x_N) e^{-ix'\cdot\xi'} \,dx',\\
\CF^{-1}_{\xi'}[g (\,\cdot\,, x_N)](x') &:= \frac{1}{(2\pi)^{N-1}}\int_{\BR^{N-1}}
g(\xi', x_N) e^{ix'\cdot\xi'}\,d\xi',
\end{align*}
where we have set $x'=(x_1,\cdots,x_{N-1}) \in \BR^{N-1}$ and $\xi' = (\xi_1, \cdots, \xi_{N-1})\in\BR^{N-1}$, and the Laplace transform $\CL[f](\lambda)$ and inverse
Laplace transform $\CL^{-1}[g](t)$ by
$$\CL[f](\lambda) = \int_{\BR} e^{-\lambda t}f(\cdot, t)\,dt, \quad
\CL^{-1}[g](t) = \frac{1}{2\pi i}\int_{\BR} e^{\lambda t} g(\lambda)\,d\tau
\quad (\lambda = \gamma + i\tau).
$$
\par
For a  domain $D$ and a Banach space $X$ on $\BK$,  $L_p(D, X)$ and $W^m_p(D, X)$ 
stand for respective standard $X$valued Lebesgue spaces and  
 Sobolev spaces, while $\|\cdot\|_{L_p(D,X)}$ and $\|\cdot\|_{W^m_p(D,X)}$
denote their norms. When $X = \BR^N$, we omit $X=\BR^N$, namely, we write
$L_p(D)$, $W^m_p(D)$, $\|\cdot\|_{L_p(D)}$ and $\|\cdot\|_{W^m_p(D)}$. 
For a domain $D$ in $\BR^N$ and 
$N \ge 2$, we set $(\bff, \bg)_D = \int_D \bff (x) \cdot \bg (x) \,dx$ 
for $N$-vector functions $\bff$ and $\bg$ on $D$, 
where we will write $(\bff, \bg) = (\bff, \bg)_{D}$ for short if there is no confusion.
\par
For Banach spaces  $X$ and $Y$  on $\BK$,  $\CL(X, Y)$ denotes the set of all bounded
linear operators from $X$ into $Y$, and we write $\CL(X)=\CL(X,X)$.  
Let $X\times Y$ denotes the product of $X$ and $Y$,
that is $X\times Y = \{(x, y) \mid x \in X, \enskip y \in Y\}$, 
while  $\|(x, y)\|_{X\times Y}=\|x\|_X + \|y\|_Y$ denotes its norm,
where $\|\cdot\|_Z$ denotes the norm of $Z$ ( $Z \in \{X, Y\}$).
To denote $n$ product space of $X$,  we write $X^n
=\{x =(x_1, \ldots, x_n) \mid x_i \in X\, (i=1, \ldots, n)\}$, while its norm is denoted by 
$\|x\|_X= \sum_{i=1}^n\|x_i\|_X$. 
Let ${\rm Hol}\, (U, X)$ denote the set of 
all $X$ valued holomorphic functions defined on  a complex domain $U$. 
$X \hookrightarrow Y$ means that $X$ is continuously imbedded into $Y$,
 that is $X \subset Y$ and $\|x\|_Y \leq C\|x\|_X$ with some constant $C$. 
\par
For any interpolation couple $(X, Y)$ of Banach spaces $X$ and $Y$ on $\BK$, 
the operations $(X, Y) \to (X, Y)_{\theta, p}$ and 
$(X, Y) \to (X, Y)_{[\theta]}$ are 
called the real interpolation functor for each $\theta \in (0, 1)$ 
and $p \in [1, \infty]$ and the complex interpolation functor 
for each $\theta \in (0,1)$, respectively.
By $C > 0$ we will often denote a generic constant 
that does not depend on the quantities at stake.  And, by $C_{a, b, \cdots}$ we denote 
generic constants depending on the quantities $a$, $b$, $c, \cdots$.  $C$ and 
$C_{a, b, c, \cdots}$ may change from line to line. 

\subsection{Definition of Besov spaces and some properties} \label{subsec:2.1}

To define  Besov space $B^s_{q,r}$,  we  introduce Littlewood-Paley decomposition.
Let $\phi \in \CS (\BR^N)$ with $\supp \phi = \{ \xi \in \BR^N \mid 1 
\slash 2 \le \lvert \xi \rvert \le 2 \}$ such that
$\sum_{k \in \BZ} \phi (2^{- k} \xi) = 1$ for all $\xi \in \BR^N \setminus \{0\}$. 
Then, define
\begin{equation}\label{little:1}
\phi_k := \CF^{- 1}_\xi [\phi (2^{- k} \xi)] \quad (k \in \BZ), \qquad \psi =
1 - \sum_{k \in \BN} \phi (2^{- k} \xi).
\end{equation}
For $1 \le p, q \le \infty$ and $s \in \BR$ we denote
\begin{equation}\label{pld:2}
\lVert f \rVert_{B^s_{p, q} (\BR^N)} := 
\left\{\begin{aligned}
& \lVert \psi * f \rVert_{L_p (\BR^N)} + \bigg(\sum_{k \in \BN} \Big(2^{s k} \lVert \phi_k * f \rVert_{L_p (\BR^N)} \Big)^q \bigg)^{1 \slash q}
& \quad & \text{if $1\le q < \infty$}, \\
& \lVert \psi * f \rVert_{L_p (\BR^N)} + \sup_{k \in \BN} \Big(2^{s k} \lVert \phi_k * f \rVert_{L_p (\BR^N)} \Big)
& \quad & \text{if $q=\infty$}.
\end{aligned}\right.
\end{equation}
Here, $f \, * \, g$ means the convolution between $f$ and $g$.
Then Besov spaces $B^s_{p, q} (\BR^N)$ are defined as the sets of all 
$f \in \CS' (\BR^N)$ such that $\lVert f \rVert_{B^s_{p, q} (\BR^N)} < \infty$. 
In particular, 
$$B^s_{q, \infty-}(\BR^N) = \{g \in B^s_{q, \infty}(\BR^N) \mid 
\lim_{k\to\infty} 2^{sk}\|\phi_k*f\|_{L_q(\BR^N)} = 0\}.
$$
When $1 \leq r \leq \infty-$, we define $r'$ by $1' = \infty-$, ${\infty-}' = 1$ and $r'
= r/(r-1)$ for $1 < r < \infty$. 
\par
For any domain $D$ in $\BR^N$,  $B^s_{q,r}(D)$ is defined by the restriction of 
$B^s_{q,r}(\BR^N)$, that is 
\begin{gather*}B^s_{q,r}(D) = \{f \in \CD'(D) \mid \text{there exists a $g\in B^s_{q,r}(\BR^N)$
such that $g|_{D} = f$}\}, \\
\|f\|_{B^s_{q,r}(D)} = \inf \{\|g\|_{B^s_{q,r}(\BR^N)} \mid 
g \in B^s_{q,r}(\BR^N), \enskip g|_{D} = f\}.
\end{gather*}
Here, $\CD'(\Omega)$ denotes the set of all distributions on $D$ and $g|_{D}$ denotes the 
restriction of $g$ to $D$.  
\par 
It is well-known that $B^s_{p,q}(D)$ may be \textit{characterized} by means of real interpolation.
In fact, for $-\infty<s_0<s_1<\infty$, $1<p<\infty$, $1\le q\le\infty$, and $0<\theta<1$, it follows that
\begin{equation*}
\label{interpolation}
B^{\theta s_0+(1-\theta)s_1}_{p,q}(D)=\left(H^{s_0}_p(D),H^{s_1}_p(D)\right)_{\theta,q},
\end{equation*}
cf. \cite[Theorem 8]{M74},  \cite[Theorem 2.4.2]{Tbook78}. Here, the real interpolation functors
are denoted by $(\cdot, \cdot)_{\theta, q}$.

\subsection{Estimates of  products  and 
 composite  functions using Besov norms} \label{subsec:2.2}
We use the following lemma concerning the estimate of product 
using the Besov norms. 
\begin{lem}\label{lem:prod} Let $D$ be a uniform $C^3$ domain whose boundary is a
compact hypersurce. 
Let  $N-1 < q  <2N$, $1 \leq r \leq \infty$ and $-1+N/q \leq  s < 1/q$.  Then, 
for any $u \in B^s_{q,r}(D)$ and $v \in B^{N/q}_{q,\infty}(D) \cap L_\infty(D))$, there holds
\begin{equation}\label{prod.1}
\|uv\|_{B^s_{q,r}(D)} \leq C_{D,s,q,r}\|u\|_{B^s_{q,r}(D)}\|v\|_{B^{N/q}_{q, \infty} \cap L_\infty(D)}.
\end{equation}
\end{lem}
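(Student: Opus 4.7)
The plan is to reduce \eqref{prod.1} to the analogous bilinear estimate on $\BR^N$ via a bounded extension operator, and then to prove the whole-space bound using Bony's paraproduct decomposition. Since $D$ is a uniform $C^3$ domain with compact boundary, standard constructions yield bounded extensions $E_1\colon B^s_{q,r}(D)\to B^s_{q,r}(\BR^N)$ and $E_2\colon B^{N/q}_{q,\infty}(D)\cap L_\infty(D)\to B^{N/q}_{q,\infty}(\BR^N)\cap L_\infty(\BR^N)$ (the upper endpoint $s<1/q$ is what prevents any trace obstruction when constructing $E_1$). Setting $\tu=E_1u$ and $\tv=E_2v$ and using the infimum definition of the norm on $B^s_{q,r}(D)$, it suffices to prove
$$\|\tu\,\tv\|_{B^s_{q,r}(\BR^N)} \le C\|\tu\|_{B^s_{q,r}(\BR^N)}\|\tv\|_{B^{N/q}_{q,\infty}(\BR^N)\cap L_\infty(\BR^N)}.$$

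On $\BR^N$ I would invoke Bony's decomposition $\tu\,\tv=T_{\tu}\tv+T_{\tv}\tu+R(\tu,\tv)$, where $T_fg:=\sum_k(S_{k-2}f)(\phi_k*g)$ and $R(f,g):=\sum_k(\phi_k*f)\,\tilde\Delta_kg$ are the usual paraproduct and remainder, with $\tilde\Delta_k=\phi_{k-1}*\cdot+\phi_k*\cdot+\phi_{k+1}*\cdot$ and $S_{k-2}$ a low-frequency cut-off. The piece $T_{\tv}\tu$ relies solely on $\tv\in L_\infty$: each summand is frequency-localized in an annulus of size $2^k$, so $\|T_{\tv}\tu\|_{B^s_{q,r}(\BR^N)}\lesssim\|\tv\|_{L_\infty}\|\tu\|_{B^s_{q,r}(\BR^N)}$ follows by H\"older. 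For $T_{\tu}\tv$, Bernstein's inequality yields $\|S_{k-2}\tu\|_{L_\infty}\lesssim\sum_{j\le k-3}2^{(N/q-s)j}a_j$ with $(a_j)\in\ell^r$ and $\|a\|_{\ell^r}\sim\|\tu\|_{B^s_{q,r}(\BR^N)}$; pairing with $\|\phi_k*\tv\|_{L_q}\lesssim 2^{-Nk/q}\|\tv\|_{B^{N/q}_{q,\infty}(\BR^N)}$ and weighting by $2^{sk}$ reduces the estimate to a discrete convolution with an $\ell^1$-summable kernel, thanks to $s<N/q$ (which follows from $s<1/q$).

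The remainder $R(\tu,\tv)$ is the real obstacle: its dyadic blocks have Fourier support in a ball of radius $\lesssim 2^k$ rather than an annulus, so a naive weighted $\ell^r$-summation of $L_q$-norms would demand $s>0$ and fail in the negative-$s$ range covered by the statement. The remedy is the standard bilinear remainder estimate (cf. Abidi--Paicu \cite{AP07}),
$$\|R(\tu,\tv)\|_{B^{s+N/q}_{q/2,r}(\BR^N)}\le C\|\tu\|_{B^s_{q,r}(\BR^N)}\|\tv\|_{B^{N/q}_{q,\infty}(\BR^N)},$$
which holds as soon as $s+N/q>0$; this positivity is guaranteed by combining $s\ge -1+N/q$ with $q<2N$, giving $s+N/q\ge -1+2N/q>0$. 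The Besov analogue of the Sobolev embedding $B^{s+N/q}_{q/2,r}(\BR^N)\hookrightarrow B^s_{q,r}(\BR^N)$ (the regularity drop $N/q$ matches the change of integrability exponent) then converts this into the desired $B^s_{q,r}$-bound. Summing the three contributions and restricting to $D$ completes the argument; the principal technical point is selecting the precise bilinear target space $B^{s+N/q}_{q/2,r}$ whose Sobolev embedding lands exactly in $B^s_{q,r}$, and verifying that the positivity $s+N/q>0$ is strict at the borderline $s=-1+N/q$ via the strict hypothesis $q<2N$.
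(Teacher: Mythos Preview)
Your proof is correct and follows essentially the same underlying approach as the paper. The paper's proof simply cites the Abidi--Paicu \cite{AP07} and Haspot \cite{H11} product estimates as black boxes and then checks that the hypotheses $N-1<q<2N$ and $-1+N/q\le s<1/q$ place the parameters in the admissible range ($-N/q<s<N/q$ for $q\ge2$, respectively $-N/q'<s<N/q$ for $q<2$); you instead unpack the mechanism behind those estimates via Bony's paraproduct decomposition, arriving at the same constraints ($s<N/q$ for the paraproduct $T_{\tu}\tv$ and $s+N/q>0$ for the remainder). The only cosmetic difference is that the paper treats the range $1\le q<2$ separately (using the dual exponent $q'$), whereas your remainder argument routes through $B^{s+N/q}_{q/2,r}$ and requires the quasi-Banach Besov theory when $q<2$; this is standard and does not affect correctness.
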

\begin{proof} By the Abidi-Paicu estimate \cite{AP07} and the Haspot estimate
\cite{H11}, when $2< q < \infty$ and 
$-N/q < s < N/q$ or when $1 \leq q < 2$ and $-N/q' < s < N/q$,  
the estimate \eqref{prod.1}
holds. When $2 \leq q < \infty$, the condition: $-N/q < -1+N/q$ implies that 
$q <2N$.  When $1 \leq q < 2$, the condition: $-N/q' \leq -1 + N/q$ implies that 
$N \geq 1$.  The condition: $N-1 < q$ follows from the condition:
$-1 + N/q < 1/q$. The proof is completed. 
\end{proof}
The following lemma is concerned with the estimate of composite functions using
 Besov norms , cf. 
\cite[Proposition 2.4]{H11} and \cite[Theorem 2.87]{BCD}.
\begin{lem}\label{lem:Hasp} Let $1 < q < \infty$. 
Let $I$ be an open interval of $\BR$.  Let $\omega>0$ and let $\tilde\omega$ be the smallest
integer such that $\tilde\omega \geq \omega$. Let $F:I \to \BR$ satisfy $F(0) = 0$ and 
$F' \in BC^{\tilde\omega}(I, \BR)$.  Assume that $v\in B^\omega_{q,r}$
has valued in $J \subset\subset I$.  Then, 
$F(v) \in B^\omega_{q,1}$ and there exists a constant $C$ depending only on 
$\nu$, $I$, $J$, and $N$, such that 
$$\|F(v)\|_{B^\omega_{q,1}} \leq C(1 + \|v\|_{L_\infty})^{\tilde\omega}
\|F'\|_{BC^{\tilde\omega}(I,\BR)}
\|v\|_{B^\omega_{q,1}}.$$
\end{lem}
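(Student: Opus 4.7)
The statement is the standard Meyer--Bony composition estimate in Besov spaces, so the plan is to reproduce the proof from \cite[Theorem 2.87]{BCD} (which in turn is the one Haspot \cite{H11} adapts), and then invoke the definition of $B^\omega_{q,1}(D)$ via restriction. Concretely, I would first extend $v$ from $D$ to $\BR^N$ preserving its range in $J$ (using a standard Stein-type extension composed with a smooth truncation that equals the identity on $J$ and stays in $I$), reducing everything to the case $D = \BR^N$. Since $F(0) = 0$, the composition $F \circ v$ is well defined on all of $\BR^N$ and vanishes where $v$ does, so no issue arises when passing back to $D$ via restriction.

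For the whole-space estimate, introduce the Littlewood--Paley projections $\Delta_k v := \phi_k * v$ and the partial sums $S_k v := \psi * v + \sum_{j = 0}^{k-1} \phi_j * v$, which converge to $v$ uniformly (thanks to $v \in L_\infty$). Then write the telescoping identity
\[
F(v) = F(S_0 v) + \sum_{k = 0}^\infty \bigl(F(S_{k+1} v) - F(S_k v)\bigr)
     = F(S_0 v) + \sum_{k=0}^\infty m_k\,\Delta_k v,
\]
with $m_k := \int_0^1 F'\bigl(S_k v + \tau\,\Delta_k v\bigr)\,d\tau$. The task is to bound the $B^\omega_{q,1}$-norm of each piece: the low-frequency term $F(S_0 v)$ is Schwartz-localized in frequency, so $\|F(S_0 v)\|_{B^\omega_{q,1}} \lesssim \|F'\|_{L_\infty} \|v\|_{L_q} \lesssim \|F'\|_{L_\infty}\|v\|_{B^\omega_{q,1}}$, while for the sum one needs to control $2^{\omega j}\|\Delta_j(m_k \Delta_k v)\|_{L_q}$ summed in $\ell^1$ over $j$ and $k$.

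Split this double sum into the diagonal/high-frequency regime $k \geq j$ and the off-diagonal regime $k < j$. In the first regime, $\|m_k\|_{L_\infty} \le \|F'\|_{L_\infty}$ together with near-orthogonality of $\Delta_j$ and $\Delta_k$ gives a bound by $\|F'\|_{L_\infty}\|v\|_{B^\omega_{q,1}}$ after summation in $j \le k$. The delicate regime is $k < j$: here one expands $m_k$ through a Fa\`a di Bruno formula, so that $\nabla^\ell m_k$ is a polynomial of degree $\le \ell$ in $\nabla S_k v,\dots,\nabla^\ell S_k v$ with coefficients given by derivatives of $F'$ up to order $\ell - 1 \le \tilde\omega$; each derivative of $S_k v$ costs at most $2^{k}$ in frequency by Bernstein, while the projector $\Delta_j$ with $j > k$ delivers a gain $2^{-j\tilde\omega}$. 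These combine to a factor $2^{(k - j)\tilde\omega}$ that is summable in $k < j$, and the polynomial structure of Fa\`a di Bruno produces the prefactor $(1 + \|v\|_{L_\infty})^{\tilde\omega}\|F'\|_{BC^{\tilde\omega}(I,\BR)}$.

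The main obstacle is precisely this off-diagonal bookkeeping: accounting honestly for how many derivatives of $F'$ and how many copies of $\|v\|_{L_\infty}$ appear at each order, so that the resulting geometric series in $k - j$ converges uniformly and the final constant is the one stated. Because the calculation is a textbook paraproduct argument already written out in full in \cite[Theorem 2.87]{BCD} and \cite[Proposition 2.4]{H11}, I would in practice simply cite those references rather than repeat the computation, noting only the reduction from $D$ to $\BR^N$ described above.
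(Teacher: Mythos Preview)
Your proposal is correct and in fact goes further than the paper itself: the paper does not prove this lemma at all but simply states it with a citation to \cite[Proposition 2.4]{H11} and \cite[Theorem 2.87]{BCD}, which are exactly the references you identify and whose argument you sketch. Your outline of the Meyer--Bony telescoping decomposition with the diagonal/off-diagonal split and Fa\`a di Bruno bookkeeping is the standard proof from those sources, so your approach coincides with what the paper implicitly relies on.
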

\subsection{Fourier multiplier theorems in $\BR^N$} \label{sec:2.3}
To estimate solution formulas in $\BR^N$, we use the following 
Fourier multiplier theorem of Mihlin - H\"ormander type \cite{Mih, Hor}.
  Let 
$m(\xi)$ be a $C^\infty(\BR^N)$ function such that for any multi-index $\kappa
\in \BN_0^{N}$ there exists a constant $C_\alpha$ such that 
$$|D_\xi^\kappa m(\xi)| \leq C_\alpha|\xi|^{-|\kappa|}.$$
We call $m$ a multiplier symbol  of order $0$.  Set 
$[m] = \max_{|\kappa| \leq N}C_{\kappa}$.  For any multiplier  of order $0$,
we define an operator $T_m$ by 
$$T_mf = \CF^{-1}[m \CF[f]].$$  
We call $T_m$ the Fourier multiplier with symbol $m$.  We know the following
Fourier multiplier theorem of 
Mihlin-H\"ormander type.
\begin{prop}\label{FMT} Let $1<q<\infty$, $1 \leq r \leq \infty$ and $s\in\BR$. 
Let $T_m$ be a Fourier multiplier with symbol $m$.  Then, for any 
$f \in B^s_{q,r}(\BR^n)$, there holds
$$\|T_mf\|_{B^s_{q,r}(\BR^N)} \leq C_q[m]\|f\|_{B^s_{q,r}(\BR^N)}$$
with some constant $C_q$ depending solely on $q$.
\end{prop}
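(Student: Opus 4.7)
The plan is to reduce the Besov estimate to the classical scalar Mihlin--H\"ormander theorem on $L_q(\BR^N)$ via the Littlewood--Paley decomposition. The hypothesis on $m$ (control of $D^\kappa_\xi m$ up to order $N$ by $|\xi|^{-|\kappa|}$) is the standard Mihlin condition, so the first step is to invoke the well-known fact that for $1<q<\infty$ one has
\begin{equation*}
\|T_m g\|_{L_q(\BR^N)} \leq C_q[m]\,\|g\|_{L_q(\BR^N)}
\end{equation*}
with a constant $C_q$ depending only on $q$ and $N$.

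Next I would exploit the fact that all the relevant operators are Fourier multipliers and therefore commute. Concretely, since $\phi_k$ and $\psi$ from \eqref{little:1} are Schwartz functions, for any tempered distribution $f$
\begin{equation*}
\phi_k * (T_m f) = \CF^{-1}\bigl[\phi(2^{-k}\xi)\,m(\xi)\,\CF[f](\xi)\bigr] = T_m(\phi_k * f),
\qquad \psi * (T_m f) = T_m(\psi * f).
\end{equation*}
Applying the $L_q$-bound from the first step to each Littlewood--Paley block gives
\begin{equation*}
\|\phi_k * T_m f\|_{L_q(\BR^N)} \leq C_q[m]\,\|\phi_k * f\|_{L_q(\BR^N)}, \qquad \|\psi * T_m f\|_{L_q(\BR^N)} \leq C_q[m]\,\|\psi * f\|_{L_q(\BR^N)}.
\end{equation*}

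Multiplying the first inequality by $2^{sk}$ and taking $\ell_r(\BN)$ norms in $k$, then summing with the low-frequency estimate, one obtains the asserted bound directly from definition \eqref{pld:2} of $\|\cdot\|_{B^s_{q,r}(\BR^N)}$, for every $s\in\BR$ and $1\leq r\leq\infty$ (the value of $s$ and $r$ only enters through the weighted $\ell_r$ summation, which commutes with the pointwise-in-$k$ estimates). There is really no obstacle here: the classical scalar Mihlin--H\"ormander theorem does all the analytic work, and the Littlewood--Paley characterization of Besov norms transfers it to $B^s_{q,r}$ for free. The only mild subtlety is to justify that $T_m f\in\CS'(\BR^N)$ is well-defined as a tempered distribution for $f\in B^s_{q,r}(\BR^N)$, which follows from the fact that $m$ is a slowly increasing smooth function (bounded, in fact) on $\BR^N\setminus\{0\}$ together with the frequency-localized representation $T_m f = \psi*T_m f + \sum_k \phi_k * T_m f$ established above.
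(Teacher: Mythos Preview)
Your proof is correct and follows essentially the same approach as the paper: invoke the classical Mihlin--H\"ormander $L_q$ bound, observe that $\phi_k*(T_mf)=T_m(\phi_k*f)$ (and similarly for $\psi$), apply the $L_q$ estimate block by block, and read off the Besov bound from \eqref{pld:2}. If anything, your commutation argument is written more carefully than the paper's, which somewhat loosely says that ``$\phi_k m$ and $\psi m$ are also multiplier symbols of order $0$'' before arriving at the same block-wise inequalities.
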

\begin{proof}
Let $\phi_k$ and $\psi$ be functions introduced in \eqref{little:1} to define the Littlewood-
Paley decomposition. 
Let $m(\xi)$ be a multiplier symbol of order $0$, and then 
$\phi_km$ and $\psi m$ are also multiplier symbols of order $0$.
By  the standard Fourier multiplier
theorem of Mihlin-H\"ormander type, we have
\begin{align*}
\|\phi_k*(T_mf) \|_{L_q(\BR^N)} &\leq C[m]\|\phi_k*f\|_{L_q(\BR^N)}, \\
\|\psi *( T_mf)\|_{L_p(\BR^N)} &\leq C[m]\|\psi*f\|_{L_q(\BR^N)}.
\end{align*}
Thus, by the definitions of the Besov norms \eqref{pld:2}, we have
$$\|T_mf\|_{B^s_{q,r}(\BR^N)} \leq C[m]\|f\|_{B^s_{q,r}(\BR^N)}.$$
This completes the proof of Proposition \ref{FMT}. 
\end{proof}
\subsection{Symbol classes and estimates of the integral operators in $\HS$} \label{subsec:2.4}

Let $\Sigma_{\epsilon, \lambda_0}$ be a sector defined by 
\begin{equation*}\label{sector}
\Sigma_{\epsilon, \lambda_0} = \{\lambda \in \BC\setminus\{0\} \mid 
|\arg\lambda| \leq \pi-\epsilon, \quad |\lambda| \geq \lambda_0\}
\end{equation*}
for $\epsilon \in (0, \pi/2)$ and $\lambda_0 > 0$, cf. Subsec. \ref{sec:2}.
We introduce symbol classes used to represent solution formulas
in $\HS$.  Let $m(\lambda, \xi')$ be a function defined on $\Lambda_{\epsilon, \lambda_0}\times
(\BR^{N-1}\setminus\{0\})$ such that for each $\xi' \in \BR^{N-1}$ $m(\lambda, \xi')$ is  
holomorphic with respect to $\lambda \in \Lambda_{\epsilon, \lambda_0}$ and for 
each $\lambda \in \Lambda_{\epsilon, \lambda_0}$ $m(\lambda, \xi')
\in C^\infty(\BR^{N-1}\setminus
\{0\})$.  Let $\ell \in \BZ$. We say that  $m(\lambda, \xi')$ is an order $\ell$  symbol 
if for any $\kappa' \in \BN_0^{N-1}$
and $\lambda \in \Lambda_{\epsilon}$ there exists a constant $C_{\kappa'}$ 
depending on $\kappa'$, $\epsilon$, $\lambda_0$ and $\ell$ such that 
\begin{equation*}\label{multi.1}
|D_{\xi}^{\kappa'} m(\lambda, \xi')| \leq C_{\kappa'}(|\lambda|^{1/2}+|\xi'|)^{\ell-|\kappa'|}.
\end{equation*}
Let 
$$\|m\| = \max_{|\kappa'| \leq N} C_{\kappa'}.$$
We can show the following two propositions using the same argument as in the proof of
Lemma 4.4 in Enomoto-Shibata \cite{ES13}.
\begin{prop} \label{prop:2}
Let $1 < q < \infty$, $\epsilon \in (0, \pi/2)$, $\lambda_0>0$, and 
$\lambda\in\Lambda_{\epsilon, \lambda_0}$. Let  $m_0(\lambda, \xi') \in \BM_0$. Set 
$$M(x_N) = \frac{e^{-Bx_N}-e^{-Ax_N}}{B-A}.$$
Define the integral operators $L_i$, $i=1, 2$, by the formula:
\allowdisplaybreaks
\begin{align*}
L_1(\lambda)f &= \int^\infty_0\CF^{-1}_{\xi'}\left[m_0(\lambda, \xi')
Be^{-B(x_N+y_N)}\CF'[f](\xi', y_N)\right](x')\,dy_N, \\
L_2(\lambda)f &= \int^\infty_0\CF^{-1}_{\xi'}\left[m_0(\lambda, \xi')
B^2M(x_N + y_N)\CF'[f](\xi', y_N)\right](x')\,dy_N, \\
L_3(\lambda)f &= \int^\infty_0\CF^{-1}_{\xi'}\left[m_0(\lambda, \xi')
B^2\pd_\lambda(Be^{-B(x_N+y_N)})\CF'[f](\xi', y_N)\right](x')\,dy_N, \\
L_4(\lambda)f &= \int^\infty_0\CF^{-1}_{\xi'}\left[m_0(\lambda, \xi')
B^2\pd_\lambda(B^2M(x_N + y_N))\CF'[f](\xi', y_N)\right](x')\,dy_N,
\end{align*}
respectively. 
Then for every $f\in L_q(\HS)$, it holds
\begin{equation*}
\|L_i(\lambda)f\|_{L_q(\HS)} \leq C_q\|m_0\|\|f\|_{L_q(\HS)}\quad (i=1,2,3,4).
\end{equation*}
\end{prop}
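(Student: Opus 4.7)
The plan is to follow the strategy of Enomoto--Shibata \cite{ES13}, Lemma 4.4: decompose each operator $L_i$ as the composition of a tangential Fourier multiplier in $x'$ with a one-dimensional integral operator in the normal variable $x_N$, and then conclude via a Hilbert-type kernel estimate on $L_q(\BR_+)$.

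First I would establish the symbol estimates. For $\lambda \in \Sigma_{\epsilon,\lambda_0}$, the characteristic roots $A, B$ arising from the Lam\'e resolvent satisfy $|A|, |B| \sim (|\lambda|^{1/2}+|\xi'|)$ and $\mathrm{Re}\,A, \mathrm{Re}\,B \geq c_0(|\lambda|^{1/2}+|\xi'|)$. Writing $t = x_N+y_N > 0$ and $\mu_1 = Be^{-Bt}$, $\mu_2 = B^2 M(t)$, $\mu_3 = B^2\partial_\lambda(Be^{-Bt})$, $\mu_4 = B^2\partial_\lambda(B^2M(t))$, and combining $|D^{\kappa'}_{\xi'}m_0| \leq \|m_0\|(|\lambda|^{1/2}+|\xi'|)^{-|\kappa'|}$ with direct differentiation of each $\mu_i$, the target bound is
\begin{equation*}
|\xi'|^{|\kappa'|}\bigl|D^{\kappa'}_{\xi'}(m_0\mu_i)\bigr| \leq C_{\kappa'}\|m_0\|(|\lambda|^{1/2}+|\xi'|)\,e^{-(c_0/2)t(|\lambda|^{1/2}+|\xi'|)}, \qquad |\kappa'| \leq N.
\end{equation*}
The apparent singularity $1/(B-A)$ in $M(t)$ is resolved by the integral representation $M(t) = -t\int_0^1 e^{-(sB+(1-s)A)t}\,ds$, which is manifestly nonsingular. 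For $L_3$ and $L_4$, the differentiation $\partial_\lambda$ produces factors $\partial_\lambda B = 1/(2\alpha B)$; these negative powers of $B$ are exactly compensated by the prefactor $B^2$, and the resulting polynomial factors in $t$ are absorbed by slightly shrinking the exponential decay rate $c_0$.

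Second, for each fixed $t > 0$, the symbol $m_0\mu_i(\lambda,\xi',t)$ satisfies the tangential Mikhlin--H\"ormander condition with constant
\begin{equation*}
C\|m_0\|\sup_{\xi'}(|\lambda|^{1/2}+|\xi'|)\,e^{-(c_0/2)t(|\lambda|^{1/2}+|\xi'|)} \leq \frac{C\|m_0\|}{t}.
\end{equation*}
The tangential version of Proposition \ref{FMT} applied in $\BR^{N-1}$ then yields, for each $(x_N,y_N)$,
\begin{equation*}
\bigl\|\CF^{-1}_{\xi'}[m_0\mu_i\,\CF'[f(\cdot,y_N)]]\bigr\|_{L_q(\BR^{N-1})} \leq \frac{C_q\|m_0\|}{x_N+y_N}\|f(\cdot,y_N)\|_{L_q(\BR^{N-1})}.
\end{equation*}
Minkowski's integral inequality in $y_N$ followed by the classical Hilbert inequality (the kernel $(x+y)^{-1}$ defines a bounded operator on $L_q(0,\infty)$ for $1<q<\infty$, with norm $\pi/\sin(\pi/q)$) applied to $g(y) = \|f(\cdot,y)\|_{L_q(\BR^{N-1})}$, and finally taking the $L_q$-norm in $x_N$, delivers the desired bound
\begin{equation*}
\|L_i(\lambda)f\|_{L_q(\HS)} \leq C_q\|m_0\|\,\|f\|_{L_q(\HS)}.
\end{equation*}

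The main technical obstacle is the symbol bookkeeping for $L_3$ and $L_4$: the $\partial_\lambda$ differentiation acting on $Be^{-Bt}$ and $B^2M(t)$ generates several terms with polynomial growth in $t$ together with negative powers of $B$, and verifying term-by-term that the full expression satisfies the order-one Mikhlin bound with exponential decay rate $c_0/2$ requires careful algebraic accounting (in particular a Leibniz expansion for $\partial_\lambda(B^2M(t))$, where one must differentiate both the prefactor $B^2$ and the integrand in the representation of $M(t)$). Once these estimates are in place, the reduction to the Hilbert kernel and the final conclusion are identical for all four operators.
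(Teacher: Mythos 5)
Your proposal is correct and follows essentially the same route the paper relies on: the paper gives no independent proof but cites the argument of Lemma 4.4 in Enomoto--Shibata \cite{ES13}, which is exactly your scheme of order-one symbol bounds with exponential decay (handling $M$, $\partial_\lambda A$, $\partial_\lambda B$ as you do), the tangential Mikhlin--H\"ormander theorem in $\BR^{N-1}$ giving the kernel bound $C\|m_0\|(x_N+y_N)^{-1}$, and then Hilbert's inequality on $L_q(0,\infty)$. Your bookkeeping for $L_3$, $L_4$ (the factor $B^2$ compensating $\partial_\lambda B=1/(2\alpha B)$, polynomial factors in $t$ absorbed into the exponential) is the correct and standard way to reduce all four operators to the same kernel estimate.
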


\section{$L_1$ integrability of Laplace inverse transformation}\label{sec.3}

In this section, we consider the $L_1$ integrability of solutions to equations
\eqref{Eq:Linear}, which is treated as a perturbation 
of Lam\'e equations with Dirichlet conditions.   The solution to the 
time dependent problem is represented by the Laplace transform of
the solutions to the corresponding resolvent problem.  
Thus, in this section, we consider the Laplace inverse transform of 
operators holomorphically depending on the spectral parameter 
$\lambda \in \Sigma_{\epsilon, \lambda_0}$ with 
$0 < \epsilon < \pi/2$ and $\lambda_0>0$, and we shall give spectral 
properties which guarantees the $L_1$ integrability of the Laplace inverse transform.
\begin{dfn}\label{dfn:L1int} Let $D$ be a domain in $\BR^N$ 
Let $\lambda_0>0$ and $0 < \epsilon < \pi/2$.
Let $1 < q < \infty$, $1 \leq r \leq \infty-$,  and $-1+1/q < s < 1/q$.  Let $\sigma > 0$ 
be a small number such that $-1+1/q < s-\sigma < s+\sigma < 1/q$.
Let $\nu \in \{s-\sigma, s, s+\sigma\}$.
Let $\CN \in {\rm Hol}\,(\Sigma_{\epsilon, \lambda_0}, 
\CL(B^\nu_{q,r}(D), B^{\nu+2}_{q,r}(D))$.   We say that
$\CN$ has  $(s, \sigma, q, r)$  properties in $D$ if for any
$\lambda \in \Sigma_{\epsilon, \lambda_0}$ there hold
\begin{equation}\label{L1prop}\begin{aligned}
\|(\lambda, \lambda^{1/2}\bar\nabla,
\bar\nabla^2)\pd_\lambda^\ell \CN(\lambda)g\|_{B^\nu_{q,r}(D)}
& \leq C|\lambda|^{-\ell}\|g\|_{B^\nu_{q,r}(D)} \quad(\ell=0,1), \\
\|(\lambda^{1/2}\bar\nabla, \bar\nabla^2)\CN(\lambda)g
\|_{B^s_{q,r}(D)}& \leq C|\lambda|^{-\frac{\sigma}{2}}
\|g\|_{B^{s+\sigma}_{q,r}(D)} \\
\|(1, \lambda^{-1/2}\bar\nabla)\CN(\lambda)g
\|_{B^s_{q,r}(D)} & \leq C|\lambda|^{-(1-\frac{\sigma}{2})}
\|g\|_{B^{s-\sigma}_{q,r}(D)}, \\
\|(\lambda, \lambda^{1/2}\bar\nabla, \bar\nabla^2)\pd_\lambda\CN(\lambda)g
\|_{B^s_{q,r}(D)}& \leq C|\lambda|^{-(1-\frac{\sigma}{2})}
\|g\|_{B^{s-\sigma}_{q,r}(D)}
\end{aligned}\end{equation}
provided that $g\in B^{s+\sigma}_{q,r}(D)$. 
\end{dfn}
\begin{remark} \thetag1 Since $s-\sigma < s < s+\sigma$, that 
$g\in B^{s+\sigma}_{q,1}(\Omega)$ implies that 
$g\in B^{\nu}_{q,1}(\Omega)$ for $\nu=s$ and $\nu=s-\sigma$.\\
\thetag2 To prove the $L_1$ integrability of the Laplace inverse transform of
$\CN(\lambda)$, it is enough to consider the $r=1$ case. But, 
as spectral properties of operators, we consider the case where $1 \leq r \leq \infty-$. The reason
why we use $\infty-$ instead of $\infty$ is that the density argument does not
hold in case $r=\infty$. 
\end{remark}
We consider the $L_1$ integrability of the Laplace transform of $\CN$.  Let 
\begin{equation*}\label{laplace:Lame}
N(t)g = \CL^{-1}[\CN(\lambda)g](t).
\end{equation*}
\begin{prop}\label{prop:L1}Let $\epsilon \in (0, \pi/2)$ and $D$ be a domain 
in $\BR^N$.  Let 
$1 < q < \infty$, $-1+1/q < s < 1/q$, and $\lambda_0>0$. 
 Let $\sigma>0$ be a number such that 
$-1+1/q < s-\sigma < s+\sigma < 1/q$.  
Assume that $C^\infty_0(D)$ is dense in $B^\nu_{q,1}(D)$
for $\nu \in \{s-\sigma, s, s+\sigma\}$. Let 
 $\CN(\lambda)
\in {\rm Hol}\,(\Sigma_{\epsilon, \lambda_0}, \CL(B^s_{q,1}(D), 
B^{s+2}_{q,1}(D))$ be an operator having $(s, \sigma, q, 1)$ properties in $D$.
Then,  
$N(t)g=0$ for $t < 0$ and 
$e^{-\gamma t}N(t)g \in L_1(\BR, B^{s+2}_{q,1}(D))$ possessing the estimate
\begin{equation}\label{L1}
\int^\infty_0e^{-\gamma t}\|N(t)g\|_{B^{s+2}_{q,1}(D)}\,dt 
\leq C\|g\|_{B^s_{q,1}(D)}
\end{equation}
for any $g \in B^s_{q,1}(D)$ and $\gamma \geq \lambda_0$.  Here, the constant $C$
depends on $\lambda_0$ but is independent of $\gamma\geq\lambda_0$.
\end{prop}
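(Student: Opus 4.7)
The plan is to combine a sectorial Laplace inversion with a real-interpolation splitting of $g$ that reconciles the four decay bounds listed in \eqref{L1prop}. Fix $\phi \in (\pi/2, \pi - \epsilon)$ and replace the vertical-line contour in Definition~\ref{dfn:L1int} by the sectorial path $\Gamma = \Gamma_+ \cup \Gamma_0 \cup \Gamma_-$ with rays $\Gamma_\pm = \{re^{\pm i\phi} : r \geq \lambda_0\}$ and arc $\Gamma_0 = \{\lambda_0 e^{i\theta} : |\theta| \leq \phi\}$, all inside $\Sigma_{\epsilon, \lambda_0}$. The first line of \eqref{L1prop} ensures that the Bochner integral
\[
N(t)g = \frac{1}{2\pi i}\int_\Gamma e^{\lambda t}\CN(\lambda)g\,d\lambda
\]
converges absolutely in $B^{s+2}_{q,1}(D)$ for $t \neq 0$, and by Cauchy's theorem together with the decay of $\CN(\lambda)$ at infinity it agrees with $\CL^{-1}[\CN(\lambda)g](t)$ of Definition~\ref{dfn:L1int}. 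Closing the original contour to the right half-plane gives $N(t)g = 0$ for $t < 0$.

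For $t > 0$, using $\|\cdot\|_{B^{s+2}_{q,1}} \lesssim \|\cdot\|_{B^s_{q,1}} + \|\bar\nabla^2(\cdot)\|_{B^s_{q,1}}$, it suffices to bound $B_\gamma(g) := \int_0^\infty e^{-\gamma t}\|\bar\nabla^2 N(t)g\|_{B^s_{q,1}}\,dt$ by $C\|g\|_{B^s_{q,1}}$; the zeroth-order piece is treated analogously using the first and third lines of \eqref{L1prop}. I would split the $t$-integration at a scale $T_0 > 0$. On $(0, T_0)$, moving the norm inside the $\Gamma$-integral and applying \eqref{L1prop}(2) (using $\int_{\lambda_0}^\infty e^{-crt} r^{-\sigma/2}\,dr \leq C t^{\sigma/2 - 1}$ with $c = |\cos\phi|$) yields the pointwise bound $\|\bar\nabla^2 N(t)g\|_{B^s_{q,1}} \leq C t^{\sigma/2 - 1}\|g\|_{B^{s+\sigma}_{q,1}}$, integrating to $C T_0^{\sigma/2}\|g\|_{B^{s+\sigma}_{q,1}}$. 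On $(T_0, \infty)$, one integration by parts in $r$ along each ray of $\Gamma_\pm$ combined with \eqref{L1prop}(4) yields a main term of size $C t^{-1 - \sigma/2}\|g\|_{B^{s-\sigma}_{q,1}}$, integrating to $C T_0^{-\sigma/2}\|g\|_{B^{s-\sigma}_{q,1}}$, plus exponentially decaying boundary and arc contributions controlled by $C\|g\|_{B^s_{q,1}}$ via the first line of \eqref{L1prop}.

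For an arbitrary splitting $g = u + v$ with $u \in B^{s-\sigma}_{q,1}(D)$ and $v \in B^{s+\sigma}_{q,1}(D)$, the combined estimate reads
\[
B_\gamma(g) \leq C\bigl(T_0^{\sigma/2}\|v\|_{B^{s+\sigma}_{q,1}} + T_0^{-\sigma/2}\|u\|_{B^{s-\sigma}_{q,1}}\bigr) + C\|g\|_{B^s_{q,1}}.
\]
Taking the infimum first over $T_0$ and then over splittings, and substituting $\tau = T_0^\sigma$, the first term becomes $C \inf_{\tau > 0} \tau^{-1/2} K(\tau, g; B^{s-\sigma}_{q,1}, B^{s+\sigma}_{q,1})$. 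Bounding $\inf$ by $\sup$ and using the embedding $(B^{s-\sigma}_{q,1}, B^{s+\sigma}_{q,1})_{1/2, 1} \hookrightarrow (B^{s-\sigma}_{q,1}, B^{s+\sigma}_{q,1})_{1/2, \infty}$ together with the reiteration identity $B^s_{q,1} = (B^{s-\sigma}_{q,1}, B^{s+\sigma}_{q,1})_{1/2, 1}$ from Subsection~\ref{subsec:2.1}, this is dominated by $C\|g\|_{B^s_{q,1}}$, uniformly in $\gamma \geq \lambda_0$.

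The main technical obstacle is keeping the constants genuinely uniform in $\gamma \geq \lambda_0$. Near the borderline $\gamma = \lambda_0$ the arc $\Gamma_0$ generates a factor $e^{\lambda_0 t}$ that, paired with $e^{-\gamma t}$, is only conditionally integrable; this forces an additional integration by parts along the arc to extract a $t^{-1}$ decay for large $t$, and a careful bookkeeping of where the $\gamma_0$ in the statement of Theorem~\ref{thm:main.1} actually arises. The second delicate point is that the interpolation argument is applied to the sublinear operator $g \mapsto B_\gamma(g)$ rather than to a linear one; justifying this requires reducing to $C_0^\infty$-data via the density assumption, writing $B_\gamma$ as an infimum of bounded linear functionals, and then invoking real interpolation for the underlying linear problem. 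Checking these points constitutes the technical heart of the proof.
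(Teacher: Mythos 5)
Your contour representation, the vanishing of $N(t)g$ for $t<0$, and the two pointwise-in-time decay bounds
$e^{-\gamma t}\|\bar\nabla^2N(t)g\|_{B^s_{q,1}}\leq Ct^{-1+\frac{\sigma}{2}}\|g\|_{B^{s+\sigma}_{q,1}}$ and
$e^{-\gamma t}\|\bar\nabla^2N(t)g\|_{B^s_{q,1}}\leq Ct^{-1-\frac{\sigma}{2}}\|g\|_{B^{s-\sigma}_{q,1}}$
are exactly the paper's \eqref{L1.3}--\eqref{L1.4} and are fine. The gap is in the interpolation step. Your ``combined estimate''
$B_\gamma(g)\leq C\bigl(T_0^{\sigma/2}\|v\|_{B^{s+\sigma}_{q,1}}+T_0^{-\sigma/2}\|u\|_{B^{s-\sigma}_{q,1}}\bigr)+C\|g\|_{B^s_{q,1}}$ for an \emph{arbitrary} splitting $g=u+v$ and a \emph{single} threshold $T_0$ does not follow from what you proved: on $(0,T_0)$ you must still handle $N(t)u$, whose only available bound $t^{-1-\sigma/2}\|u\|_{B^{s-\sigma}_{q,1}}$ is not integrable at $t=0$ (and \eqref{L1prop} gives no short-time bound for $u\in B^{s-\sigma}_{q,1}$, nor even for $u\in B^s_{q,1}$, better than a borderline $t^{-1}$ up to a logarithm), while on $(T_0,\infty)$ you must handle $N(t)v$, whose bound $t^{-1+\sigma/2}\|v\|_{B^{s+\sigma}_{q,1}}$ is not integrable at infinity. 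These cross terms are precisely the divergent pieces and are not absorbed by the $C\|g\|_{B^s_{q,1}}$ arc/boundary term. Consequently the reduction to $\inf_{\tau>0}\tau^{-1/2}K(\tau,g)$, and then to $\sup_{\tau>0}\tau^{-1/2}K(\tau,g)=\|g\|_{(B^{s-\sigma}_{q,1},B^{s+\sigma}_{q,1})_{1/2,\infty}}$, is unjustified; it is also too weak in principle, since a bound of $B_\gamma$ by the $(1/2,\infty)$-norm would amount to $L_1$-in-time control in terms of essentially $B^s_{q,\infty}$ data, whereas the borderline $t^{-1}$ behavior is integrable only when the \emph{integrated} K-functional $\int_0^\infty\tau^{-1/2}K(\tau,g)\,\frac{d\tau}{\tau}$, i.e.\ the $(1/2,1)$-norm, is finite — this is exactly why the third index $1$ is required.

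The repair is to let the splitting depend on $t$ (equivalently, on the dyadic time scale), which is what the paper does: set $a_j=\sup_{t\in(2^j,2^{j+1})}e^{-\gamma t}\|\bar\nabla^2N(t)g\|_{B^s_{q,1}}$, note $\int_0^\infty e^{-\gamma t}\|\bar\nabla^2N(t)g\|_{B^s_{q,1}}\,dt\leq 2\sum_{j\in\BZ}2^ja_j$, observe from \eqref{L1.3}--\eqref{L1.4} that $\|(a_j)\|_{\ell^{1-\sigma/2}_\infty}\leq C\|g\|_{B^{s+\sigma}_{q,1}}$ and $\|(a_j)\|_{\ell^{1+\sigma/2}_\infty}\leq C\|g\|_{B^{s-\sigma}_{q,1}}$, and then use $\ell^1_1=(\ell^{1-\sigma/2}_\infty,\ell^{1+\sigma/2}_\infty)_{1/2,1}$ together with $(B^{s+\sigma}_{q,1},B^{s-\sigma}_{q,1})_{1/2,1}=B^s_{q,1}$; alternatively, bound $e^{-\gamma t}\|\bar\nabla^2N(t)g\|_{B^s_{q,1}}\leq Ct^{-1-\frac{\sigma}{2}}K(t^\sigma,g;B^{s-\sigma}_{q,1},B^{s+\sigma}_{q,1})$ pointwise in $t$ and integrate. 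Your closing remarks (uniformity in $\gamma$, sublinearity of $B_\gamma$) are side issues compared with this; the $t$-dependent splitting is the missing idea.
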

\begin{remark} The condition that $C^\infty_0(D)$ is dense in $B^\nu_{q,r}(D)$
 holds for $\nu \in \{s-\sigma, s, s+\sigma\}$, $-1 +1/q < \nu <1/q $ and $1 \leq  r \leq \infty-$
at least 
in case of  $\BR^N$, $\HS$,  bent half-spaces
and $C^2$ domains. 
\end{remark}
\begin{proof}
Since $C^\infty_0(D)$ is dense in $B^{s+\sigma}_{q,1}(D)$ and 
$B^s_{q,1}(D)$, we may assume that $g \in C^\infty_0(D)^N$ 
below. 
First, we shall show that 
\begin{equation}\label{L1.1}
N(t)g = 0 \quad\text{for $t < 0$}.
\end{equation}
To prove \eqref{L1.1}, we  represent $N(t)$ 
by using the contour integral in the complex plane
$\BC$. Let $C_R$ be a path deifined by 
\begin{align*}
C_R = \{\lambda \in \BC \mid \lambda =Re^{i\theta}, 
\quad -\frac{\pi}{2} \leq \theta \leq \frac{\pi}{2}\}.
\end{align*} 
Let $\gamma > \lambda_0$. 
By  Cauchy theorem in the theory of one complex variable, we have
\begin{equation}\label{L1.2}
0 = \int_{-R}^{R} e^{(\gamma + i\tau)t}
\CN(\gamma+i\tau )\bg\,d\tau
+ \int_{C_R+\gamma} e^{\lambda t}\CN(\lambda)\bg\,d\lambda.
\end{equation}
Using \eqref{L1prop}, we know that 
$$\|\CN(\lambda)g\|_{B^s_{q,1}(D)}
\leq C|\lambda|^{-1}\|g\|_{B^s_{q,1}(D)}.
$$
Thus, for $t < 0$ we have
\begin{align*}
&\Bigl\|\int_{C_R+\gamma} e^{\lambda t}\CN(\lambda)g
\,d\lambda\Bigr\|_{B^s_{q,1}(D)} \\
&\quad \leq Ce^{\gamma t}\int^{\pi/2}_{-\pi/2}e^{tR\cos\theta}
|\gamma+Re^{i\theta}|^{-1}\, Rd\theta 
\|g\|_{B^{s}_{q,1}(D)}\, d\theta \\
&\quad \leq Ce^{\gamma t}\int^{\pi/2}_0 e^{-|t|R\cos\theta}\,d\theta
\|g\|_{B^{s}_{q,1}(D)}.
\end{align*}
Since $|^{-|t|R\cos\theta}| \leq 1$, by Lebesgue's dominated convergence theorem, we have
$$\lim_{R\to\infty}\int^{\pi/2}_0 e^{-|t|R\cos\theta}\,d\theta
= \int^{\pi/2}_0\lim_{R\to\infty} e^{-|t|R\cos\theta}\,d\theta = 0.
$$
Therefore, letting $R\to\infty$ in \eqref{L1.2}, we have
$$0 = \int_{\BR}e^{(\gamma + i\tau)t}\CN(\gamma+i\tau )g
\,d\tau
= N(t)g,
$$
which proves \eqref{L1.1}. \par 
We next consider the case where $t>0$.  Let $\Gamma_\pm$ be the contours defined by
$$\Gamma_\pm = \{\lambda = re^{\pm \pi(\pi-\epsilon)} \mid r \in (0, \infty)\}.$$
We shall show that 
\begin{align}\label{L1.3}
\|\bar\nabla^2N(t)g\|_{B^s_{q,1}(D)} &\leq Ce^{\gamma t}
t^{-(1-\frac{\sigma}{2})}
\|g\|_{B^{s+\sigma}_{q,1}(D)}, \\
\label{L1.4}
\|\bar\nabla^2N(t)g\|_{B^s_{q,1}(D)} &\leq Ce^{\gamma t}
t^{-(1+\frac{\sigma}{2})}
\|g\|_{B^{s-\sigma}_{q,1}(D)}.
\end{align}
Noticing that $|e^{\lambda t}| = e^{t{\rm Re}\,\lambda} = e^{tr\cos(\pi-\epsilon)}
= e^{-tr\cos\epsilon}$ and $|\gamma + re^{\pm(\pi-\epsilon}|
= (\gamma^2-2\gamma\cos\epsilon r+ r^2)^{1/2} \geq (1-\cos\epsilon)^{1/2}r$
for $\lambda \in \Gamma_+\cup \Gamma_-+\gamma$
and using \eqref{L1prop}, we have 
\begin{align*}
\|\bar\nabla^2N(t)g\|_{B^s_{q,1}(D)} 
&\leq \Bigl\|\frac{1}{2\pi}\int_{\Gamma_+\cup\Gamma_-+\gamma}
e^{\lambda t}\,\bar\nabla^2 \CN(\lambda)g\,d\lambda\Bigr\|_{B^s_{q,1}}\\
&\leq Ce^{\gamma t}\int^\infty_0 e^{-tr\cos\epsilon}
((1-\cos\epsilon)^{1/2}r)^{-\frac{\sigma}{2}}\,dr
\|g\|_{B^{s+\sigma}_{q,1}(D)} \\
&=Ce^{\gamma t}t^{-1+\frac{\sigma}{2}}
\int^\infty_0 e^{-\tau\cos\epsilon}((1-\cos\epsilon)^{1/2}\tau)^{-\frac{\sigma}{2}}\,d\tau
\|g\|_{B^{s+\sigma}_{q,1}(D)}.
\end{align*}
Here, we have used the change of variable: $tr = \tau$. \par
We use integration by parts to represent 
$\bar\nabla^2N(t)g$
by
$$\bar\nabla^2N(t)g = \frac{-1}{2\pi it}
\int_{\Gamma_+\cup \Gamma_-+\gamma}e^{\lambda t}\pd_\lambda (\bar\nabla^2 \CN(\lambda)
g)\,d\lambda
$$
and applying \eqref{L1prop}
we have 
\begin{align*}
\|\bar\nabla^2N(t)g\|_{B^s_{q,1}(D)} 
&\leq \Bigl\|\frac{1}{2\pi i t}\int_{\Gamma_+\cup\Gamma_-+\gamma}e^{\lambda t}
\pd_\lambda(\bar\nabla^2 \CN(\lambda)g)\,d\lambda\Bigr\|_{B^s_{q,1}} \\
&\leq Ce^{\gamma t}
t^{-1}\int^\infty_0 e^{-tr\cos\epsilon}((1-\cos\epsilon)^{1/2}r)^{-(1-\frac{\sigma}{2})},dr
\|g\|_{B^{s-\sigma}_{q,1}(D)} \\
&=Ce^{\gamma t}
t^{-1-\frac{\sigma}{2}}
\int^\infty_0 e^{-\tau\cos\epsilon}
((1-\cos\epsilon)^{1/2}\tau)^{-(1-\frac{\sigma}{2})},d\tau
\|g\|_{B^{s-\sigma}_{q,1}(D)}.
\end{align*}
Therefore, we have \eqref{L1.3} and \eqref{L1.4}. \par
We shall prove \eqref{L1} by using \eqref{L1.3} and \eqref{L1.4}. 
We write
\begin{align*}
\int^\infty_0 e^{-\gamma t}\|\bar\nabla^2 N(t)g
\|_{B^{s}_{q,1}(D)}\,\d t 
&= \sum_{j \in \BZ} \int^{2^{(j+1)}}_{2^j}e^{-\gamma t}
\|\bar\nabla^2N(t)g\|_{B^{s}_{q,1}(D)}\, \d t\\
&\leq \sum_{j \in \BZ} \int^{2^{(j+1)}}_{2^j}
\sup_{t \in (2^j, 2^{j+1})}(e^{-\gamma t}
\|\bar\nabla^2 N(t)g\|_{B^{s}_{q,1}(D)})\,\d t
\\
& \quad =\sum_{j \in \BZ} 2^j
\sup_{t \in (2^j, 2^{j+1})}(e^{-\gamma t}\|\bar\nabla^2N(t)g
\|_{B^{s}_{q,1}(D)}).
\end{align*}
Setting $a_j = \sup_{t \in (2^j, 2^{j+1})}e^{-\gamma t}
\|\bar\nabla^2N(t)g\|_{B^{s}_{q,1}(D)}$, 
we have
$$
\int^\infty_0 e^{-\gamma t}\|\bar\nabla^2N(t)g\|_{B^{s}_{q,1}(D)}\,\d t
\leq 2((2^ja_j))_{\ell_1} = 2((a_j)_{j \in \BZ})_{\ell^1_1}. 
$$
Here and in the following, 
$\ell^s_q$ denotes the set of all sequences $(2^{js}a_j)_{j \in \BZ}$ such that 
\begin{align*}
\|((a_j)_{j \in \BZ})\|_{\ell^s_q} &= \Bigl\{\sum_{j \in \BZ} 
(2^{js}|a_j|)^q \Bigr\}^{1/q} < \infty \quad\text{for $1 \leq q < \infty$},  \\
\|((a_j)_{j \in \BZ})\|_{\ell^s_\infty} &= \sup_{j \in \BZ} 
2^{js}|a_j| < \infty \quad \text{for  $q = \infty$}.
\end{align*}
By \eqref{L1.3} and \eqref{L1.4},  we have
$$\sup_{j \in \BZ} 2^{j(1-\frac{\sigma}{2})}a_j \leq C\|g\|_{B^{s+\sigma}_{q,1}(D)}, 
\quad\sup_{j \in \BZ} 2^{j(1+\frac{\sigma}{2})}a_j \leq C\|g\|_{B^{s-\sigma}_{q,1}(D)}
$$
Namely, we have
$$\|(a_j)\|_{\ell_\infty^{1-\frac{\sigma}{2}}} \leq C\|g\|_{B^{s+\sigma}_{q,1}(D)},
\quad \|(a_j)\|_{\ell_\infty^{1+ \frac{\sigma}{2}}} \leq C\|g\|_{B^{s-\sigma}_{q,1}(D)}.
$$
According to \cite[5.6.1.Theorem]{BL},  
we know that $\ell^1_1 = (\ell^{1-\frac{\sigma}{2}}_\infty,
\ell^{1+\frac{\sigma}{2}}_\infty)_{1/2, 1}$, 
where
$(\cdot, \cdot)_{\theta, q}$ denotes the real interpolation functor, 
and therefore we have 
$$
\int^\infty_0 e^{-\gamma t}\|N(t)g\|_{B^{s+2}_{q,1}(D)}\,\d t
\leq C
\|g\|_{(B^{s+\sigma}_{q,1}(D), B^{s-\sigma}_{q,1}(D))_{1/2, 1}} 
=C
\|g\|_{B^{s}_{q,1}(D)}
$$
for any $g \in C^\infty_0(D)$. But, $C^\infty_0(D)$ is dense in
$B^s_{q,1}(D)$, so the estimate \eqref{L1} holds for any 
$g\in B^s_{q,1}(D)$.  This  completes the proof of 
Proposition \ref{prop:L1}.
\end{proof}
To treat the perturbation term, we introduce one more definition.
\begin{dfn}\label{dfn.2}
 Let $1 < q < \infty$, $\lambda_0>0$, and $0 < \epsilon < \pi/2$. 
Let $X$ and $Y$ be two Banach spaces and $\CM(\lambda) \in {\rm Hom}\,(\Sigma_{\epsilon,
\lambda_0}, \CL(X, Y))$.  We say that $\CM(\lambda)$ has a generalized
resolvent properties for $(X, Y)$ if there hold
\begin{equation*}\label{L1. 5}
\|\pd_\lambda^\ell \CM(\lambda)f\|_Y \leq C|\lambda|^{-\ell-1}
\|f\|_X
\quad\text{for $f \in X$ and $\ell=0,1$}.
\end{equation*}
\end{dfn}
\begin{remark} If $\CM(\lambda)$ is a usual resolvent operator 
$\CM(\lambda) = (\lambda \bI - A)^{-1}$ of closed linear opreator $A$ defined in
dense subspace $D(A)$ of $X$ for $\lambda \in \Sigma_{\epsilon, \lambda_0}$,
then $\pd_\lambda (\lambda\bI-A)^{-1} = -(\lambda\bI-A)^{-2}$, and so
$(\lambda \bI-A)^{-1}$ has generalized resolvent properties for $(X, X)$.
\end{remark}
Let $M(t)$ be the Laplace invese transform of $\CM(\lambda)$ defined by
$$M(t)f = \CL^{-1}[\CM(\lambda)f] = \int_{\BR}e^{(\gamma  + i\tau)t}
\CM(\gamma + i\tau)f\,d\tau.
$$
Then, we have the following proposition about the $L_1$ integrability of 
$M(t)$.
\begin{prop} \label{prop:L2}
Let $1 < q < \infty$, $\lambda_0>0$, and $0 < \epsilon < \pi/2$. 
Let $X$ and $Y$  be two Banach spaces and $\CM(\lambda) \in {\rm Hom}\,(\Sigma_{\epsilon,
\lambda_0}, \CL(X, Y))$. If $\CM(\lambda)$ has  generalized resolvent properties
for $(X, Y)$, 
then, for $f \in X$ and $\gamma > \lambda_0$, it holds that 
$$\int_{\BR}e^{-\gamma t}\|M(t)f\|_Y \, dt \leq C\|f\|_X.$$
\end{prop}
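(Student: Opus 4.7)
The plan is to mimic the strategy of Proposition~\ref{prop:L1}: first verify that $M(t)f$ vanishes for $t<0$, then represent $M(t)f$ for $t>0$ via a deformed contour, bound it in two complementary ways (directly, and after one integration by parts in $\lambda$), and combine those bounds so that the integral in $t$ converges. Because we have only the two resolvent estimates at $\ell=0,1$ rather than the four distinct Besov bounds used for the Lam\'e semigroup, the analysis is considerably simpler and no genuine real interpolation is required at the end.

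First, to show $M(t)f=0$ for $t<0$, I will close the vertical contour $\{\gamma+i\tau:\tau\in\BR\}$ by a right semicircle $C_R=\{Re^{i\theta}:\theta\in[-\pi/2,\pi/2]\}$ and apply Cauchy's theorem in the right half-plane, which is contained in $\Sigma_{\epsilon,\lambda_0}$ since $\gamma>\lambda_0$. The bound $\|\CM(\lambda)f\|_Y\le C|\lambda|^{-1}\|f\|_X$ combined with the decay of $e^{\lambda t}$ on $C_R$ for $t<0$ makes the contribution from the arc vanish as $R\to\infty$, exactly as in Proposition~\ref{prop:L1}.

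For $t>0$ I deform the integration contour to $\Gamma_\pm+\gamma$, where $\Gamma_\pm=\{re^{\pm i(\pi-\epsilon)}:r>0\}$; this is legitimate once $\gamma$ is chosen so that the contour lies inside $\Sigma_{\epsilon,\lambda_0}$, which can be arranged by enlarging $\lambda_0$ by a fixed factor depending on $\epsilon$. An elementary computation yields $|\gamma+re^{\pm i(\pi-\epsilon)}|\ge c_\epsilon(\gamma+r)$ for some $c_\epsilon>0$. Using $\|\CM(\lambda)f\|_Y\le C|\lambda|^{-1}\|f\|_X$ and substituting $u=tr$, I obtain
\[
e^{-\gamma t}\|M(t)f\|_Y \le C\,F_1(\gamma t)\,\|f\|_X, \qquad F_1(a):=\int_0^\infty\frac{e^{-u\cos\epsilon}}{a+u}\,du.
\]
An integration by parts in $\lambda$ combined with the $\ell=1$ estimate $\|\pd_\lambda\CM(\lambda)f\|_Y\le C|\lambda|^{-2}\|f\|_X$ produces the alternative pointwise bound
\[
e^{-\gamma t}\|M(t)f\|_Y \le C\,F_2(\gamma t)\,\|f\|_X, \qquad F_2(a):=\int_0^\infty\frac{e^{-u\cos\epsilon}}{(a+u)^2}\,du.
\]

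Finally, I take the minimum of the two bounds and change variables $a=\gamma t$ to obtain
\[
\int_0^\infty e^{-\gamma t}\|M(t)f\|_Y\,dt \le \frac{C}{\gamma}\int_0^\infty\min\bigl(F_1(a),F_2(a)\bigr)\,da\cdot\|f\|_X.
\]
The asymptotics $F_1(a)=O(\log(1/a))$ as $a\to 0^+$, $F_1(a)=O(1/a)$ as $a\to\infty$, and $F_2(a)=O(1/a)$ as $a\to 0^+$, $F_2(a)=O(1/a^2)$ as $a\to\infty$ show that the $a$-integral is finite: the direct estimate $F_1$ handles the region $a\le 1$, and the integration-by-parts estimate $F_2$ handles $a\ge 1$. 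Combined with $\gamma\ge\lambda_0$, this yields the asserted bound with a constant depending only on $\lambda_0$ and $\epsilon$. The delicate point is precisely this last step: neither bound alone is integrable on $(0,\infty)$ (the direct estimate fails at infinity, the integration-by-parts estimate at the origin), and only the combination via the minimum, equivalently splitting the $t$-integral at $t=1/\gamma$, delivers $L_1$ integrability.
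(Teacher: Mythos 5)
Your proof is correct, and it reaches the conclusion by a slightly different (and in fact more elementary) final step than the paper. The paper proves Proposition \ref{prop:L2} by forcing the two hypotheses into the shape of Definition \ref{dfn:L1int}: since $|\lambda|\geq\lambda_0$, it writes $\|\CM(\lambda)f\|_Y\leq C\lambda_0^{-(1-\sigma/2)}|\lambda|^{-\sigma/2}\|f\|_X$ and $\|\pd_\lambda\CM(\lambda)f\|_Y\leq C\lambda_0^{-(1+\sigma/2)}|\lambda|^{-(1-\sigma/2)}\|f\|_X$, and then repeats verbatim the argument of Proposition \ref{prop:L1}: contour deformation giving the bounds $t^{-(1\mp\sigma/2)}$, the dyadic decomposition of $(0,\infty)$, and the real-interpolation identity $\ell^1_1=(\ell^{1-\sigma/2}_\infty,\ell^{1+\sigma/2}_\infty)_{1/2,1}$. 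You use the same two contour estimates (direct, and after one integration by parts), but you keep the natural decay $|\lambda|^{-1}$, $|\lambda|^{-2}$, obtaining the sharper pointwise bounds $F_1(\gamma t)$ (logarithmic as $\gamma t\to0$, $O((\gamma t)^{-1})$ at infinity) and $F_2(\gamma t)$ ($O((\gamma t)^{-1})$ near zero, $O((\gamma t)^{-2})$ at infinity), and you conclude simply by splitting the time integral at $t=1/\gamma$. This is legitimate precisely because, unlike in Proposition \ref{prop:L1}, both bounds carry the same data norm $\|f\|_X$, so no interpolation between two different source spaces is needed; your argument is therefore self-contained and even yields a constant of order $\gamma^{-1}$, uniform in $\gamma\geq\lambda_0$. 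One small caveat: the shifted rays $\Gamma_\pm+\gamma$ contain points of modulus as small as $\gamma\sin\epsilon$, so for $\lambda_0<\gamma<\lambda_0/\sin\epsilon$ the contour may leave $\Sigma_{\epsilon,\lambda_0}$, where the hypotheses on $\CM$ are stated; you flag this and enlarge $\lambda_0$ by a factor depending on $\epsilon$, which is acceptable here (the paper takes the same liberty silently in the proof of Proposition \ref{prop:L1}, and in all applications $\lambda_0$ is anyway taken large), though one could instead deform only to the boundary of a slightly larger sector or restrict to $\gamma\geq\lambda_0/\sin\epsilon$ and treat the remaining range by the trivial resolvent bound on the vertical line after one integration by parts.
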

\begin{proof} For $\lambda \in \Sigma_{\epsilon, \lambda_0}$, we have
\begin{align*}
\|\CM(\lambda)f\|_Y \leq C|\lambda|^{-1}\|f\|_X
&\leq C\lambda_0^{-(1-\frac{\sigma}{2})}|\lambda|^{-\frac{\sigma}{2}}\|f\|_X,
\\
\|\pd_\lambda \CM(\lambda)f\|_Y \leq C|\lambda|^{-2}\|f\|_X
&\leq C\lambda_0^{-(1+\frac{\sigma}{2})}|\lambda|^{-(1-\frac{\sigma}{2})}\|f\|_X
\end{align*}
for any $\lambda \in \Sigma_{\epsilon, \lambda_0}$.  Thus, employing the 
same argument as in the proof of Proposition \ref{prop:L1}, we can prove
Proposition \ref{prop:L2}.  This completes the proof. 
\end{proof}
In view of Propositions \ref{prop:L1} and \ref{prop:L2}, to prove the $L_1$ integrability
of solutions to the evolution equations, it is a key to prove the existence of 
solutions operators having  $(s, \sigma, q, 1)$ properties and generalized resolvent 
properties to the corresponding  resolvent problems. Thus, the main parts of this paper are devoted
to driving solution operators having such properties.

Now, we shall give a theorem used to prove that an operator has 
$(s, \sigma, q,r)$ properties. For this purpose, 
we consider two operator valued holomorphic 
functions $Q_i(\lambda)$ ($i=1,2$) defined on $\Sigma_\epsilon$ acting on
$f \in C^\infty_0(\HS)$. We denote the dual operator of $Q_i(\lambda)$ by $Q_i(\lambda)^*$
which satisfies the equality:
$$|(Q_i(\lambda)f, \varphi)_D| = |(f, Q_i(\lambda)^*\varphi)_D| \quad(i=1,2)
$$
for any $f$ and $\varphi \in C^\infty_0(D)$.  Here, $(f, g) = \int_D f(x)g(x)\,dx$.  
And, we assume that $C^\infty_0(D)$ is dense in $B^s_{q,r}(D)$.
Let $Q_i(\lambda)$ satisfy the following assumptions.
\begin{assump}\label{assump.1} Let $1 < q < \infty$ and $q' = q/(q-1)$.  \\
 For any $f \in C^\infty_0(D)$ and 
$\lambda\in \Lambda_{\epsilon, \lambda_0}$, the following estimates hold:
\allowdisplaybreaks
\begin{align}
\|Q_1(\lambda)f\|_{W^i_q(D)}&\leq C\|f\|_{W^i_q(D)}, \label{as.1}\\
\|Q_1(\lambda)f\|_{L_q(D)}&\leq C|\lambda|^{-1/2}\|f\|_{W^1_q(D)}, \label{as.3}\\
\|Q_1(\lambda)^*f\|_{W^i_{q'}(D)}&\leq C\|f\|_{W^i_{q'}(D)}, \label{as.5} \\
\|Q_1(\lambda)^*f\|_{L_{q'}(D)}&\leq C|\lambda|^{-1/2}\|f\|_{W^1_{q'}(D)}, \label{as.6} \\
\|Q_2(\lambda)f\|_{W^i_q(D)}&\leq C|\lambda|^{-1}\|f\|_{W^i_q(D)}, \label{as.7} \\
\|Q_2(\lambda)f\|_{W^1_q(D)}&\leq C|\lambda|^{-1/2}\|f\|_{L_q(D)}, \label{as.9} \\
\|Q_2(\lambda)^*f\|_{W^i_{q'}(D)}&\leq C|\lambda|^{-1}
\|f\|_{W^i_{q'}(D)}, \label{as.11}\\
\|Q_2(\lambda)^*f\|_{W^1_{q'}(D)}&\leq C|\lambda|^{-1/2}\|f\|_{L_{q'}(D)}. \label{as.12} 
\end{align}
for $i=0,1$, where we have written $W^0_r(D) = L_r(D)$ for simplicity. 
\end{assump}
The following theorem will be used to prove that solution operators of 
Lam\'e equations have $(s, \sigma, q, r)$ properties, which has
 been proved in  \cite{S23}, \cite{SW1}, and \cite{Kuo23}. 
\begin{thm}\label{thm:5.2} Let $1 < q < \infty$, $1 \leq r \leq \infty$, 
$-1+1/q < s < 1/q$. Let $\sigma > 0$ be a number such that 
$-1+1/q < s-\sigma < s+\sigma < 1/q$. Let $Q_i(\lambda)$ $(i=1,2)$ be operator valued holomorphic 
functions defined on $\Lambda_{\epsilon, \lambda_0}$ acting on $C^\infty_0(D)$ functions.
 Then, for any $\lambda \in \Lambda_{\epsilon, \lambda_0}$ and $f \in C^\infty_0(D)$, 
the following assertions hold.\\
\thetag1 If $Q_1(\lambda)$ satisfies \eqref{as.1} and \eqref{as.5},
then there holds
$$\|Q_1(\lambda)f \|_{B^s_{q,r}(D)}\leq C\|f\|_{B^s_{q,r}(D)}.$$
If $Q_1(\lambda)$ satisfies \eqref{as.3} and 
\eqref{as.6} in addition, then there holds
$$\|Q_1(\lambda)f\|_{B^s_{q,r}(D)}  
\leq C|\lambda|^{-\frac{\sigma}{2}}\|f\|_{B^{s+\sigma}_{q.r}(D)}.
$$
\thetag2 If $Q_2(\lambda)$ satisfies \eqref{as.7} and \eqref{as.11},  
then there holds
$$\|Q_2(\lambda)f\|_{B^s_{q,r}(D)} \leq C|\lambda|^{-1}\|f\|_{B^s_{q,r}(D)}.$$
If $Q_2(\lambda)$ satisfies \eqref{as.9} and \eqref{as.12} in addition, then there holds
$$\|Q_2(\lambda)f\|_{B^s_{q,r}(D)}  \leq C|\lambda|^{-(1-\frac{\sigma}{2})}
\|f\|_{B^{s-\sigma}_{q,r}(D)}.$$
\end{thm}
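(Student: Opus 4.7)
The plan is to translate the Sobolev-space hypotheses of Assumption \ref{assump.1} into Besov-space estimates by combining duality with real interpolation, using $W^k_q(D) = H^k_q(D)$ for $k \in \BZ$ (valid since $1 < q < \infty$) and the identity $(H^{s_0}_q, H^{s_1}_q)_{\theta, r} = B^{(1 - \theta) s_0 + \theta s_1}_{q, r}$ recalled in Subsection 2.2.

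First I would use duality to convert the adjoint estimates \eqref{as.5}, \eqref{as.6}, \eqref{as.11}, \eqref{as.12} into bounds for $Q_i(\lambda)$ itself on negative Sobolev spaces. Since $W^{-k}_q(D)$ is naturally the dual of $W^k_{q'}(D)$ and $C^\infty_0(D)$ is dense in both, each bound of the form $\|Q_i(\lambda)^* g\|_{W^k_{q'}} \leq M \|g\|_{W^k_{q'}}$ transfers to $\|Q_i(\lambda) f\|_{W^{-k}_q} \leq M \|f\|_{W^{-k}_q}$, and similarly for the $|\lambda|^{-1/2}$- or $|\lambda|^{-1}$-weighted variants. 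Combined with \eqref{as.1}, \eqref{as.3}, \eqref{as.7}, \eqref{as.9}, this assembles a symmetric family of Sobolev-scale bounds for $Q_1$ and $Q_2$ at orders $k \in \{-1, 0, 1\}$. For the boundedness assertions, real interpolation between $W^{-1}_q$ and $W^1_q$ with parameter $\theta = (s + 1)/2$ gives $(W^{-1}_q, W^1_q)_{\theta, r} = B^s_{q, r}$ for every $s \in (-1, 1)$, which covers the admissible range $s \in (-1 + 1/q, 1/q)$. The interpolation theorem applied to $Q_1(\lambda)$, bounded at both endpoints by the preceding step, yields $\|Q_1(\lambda) f\|_{B^s_{q, r}(D)} \leq C \|f\|_{B^s_{q, r}(D)}$; the same argument for $Q_2$, whose endpoints carry an extra factor $|\lambda|^{-1}$, delivers $\|Q_2(\lambda) f\|_{B^s_{q, r}(D)} \leq C |\lambda|^{-1} \|f\|_{B^s_{q, r}(D)}$.

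The decay assertions follow from a further interpolation using \eqref{as.3}, \eqref{as.6} (and their $Q_2$ analogues). The two derivative-dropping endpoints $Q_1(\lambda) \colon W^1_q \to L_q$ and $Q_1(\lambda) \colon L_q \to W^{-1}_q$, both with operator norm $\leq C|\lambda|^{-1/2}$, interpolate simultaneously in source and target to give $Q_1(\lambda) \colon B^{1 - \theta}_{q, r} \to B^{-\theta}_{q, r}$ with operator norm $\leq C|\lambda|^{-1/2}$ for every $\theta \in (0, 1)$, so in particular $Q_1(\lambda) \colon B^{s+\sigma}_{q, r} \to B^{s+\sigma - 1}_{q, r}$ with the factor $|\lambda|^{-1/2}$. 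A second interpolation of this decaying bound against the uniform bound $Q_1(\lambda) \colon B^{s+\sigma}_{q, r} \to B^{s+\sigma}_{q, r}$ obtained in the boundedness step, performed in the target with weight $\sigma$, yields
\begin{equation*}
\|Q_1(\lambda) f\|_{B^s_{q, r}(D)} \leq C^{1-\sigma} (C |\lambda|^{-1/2})^{\sigma} \|f\|_{B^{s+\sigma}_{q, r}(D)} = C |\lambda|^{-\sigma / 2} \|f\|_{B^{s+\sigma}_{q, r}(D)}.
\end{equation*}
Running the same chain for $Q_2$, with the extra $|\lambda|^{-1/2}$ factor inherited from \eqref{as.11} and the $|\lambda|^{-1/2}$ gain from \eqref{as.9}, \eqref{as.12}, reproduces the $|\lambda|^{-(1 - \sigma/2)}$ decay.

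The main obstacle is to make the decay interpolation honest across the full admissible window $-1 + 1/q < s \pm \sigma < 1/q$, which reaches into negative values when $q > 1$. The bilinear interpolation of the two $|\lambda|^{-1/2}$ endpoints directly produces Besov bounds only when the source exponent lies in $(0, 1)$; to cover the tail $s + \sigma \in (-1 + 1/q, 0)$ one must pair each derivative-dropping endpoint with the matching uniform endpoint ($W^{\pm 1}_q \to W^{\pm 1}_q$) in an auxiliary interpolation, shifting the available endpoint set until Besov exponents throughout the strip $(-1 + 1/q, 1/q)$ become reachable. Once the correct set of endpoints is in place the estimates themselves are a direct application of the real interpolation inequality $\|T\|_{(X_0, X_1)_{\theta, r} \to (Y_0, Y_1)_{\theta, r}} \leq \|T\|_{X_0 \to Y_0}^{1-\theta} \|T\|_{X_1 \to Y_1}^{\theta}$; the bookkeeping is routine but care is needed to track which weighted norm appears at each step.
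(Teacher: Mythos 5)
You have the right family of tools in mind (the real-interpolation characterization of Besov spaces plus duality to exploit the adjoint hypotheses), which is indeed the strategy behind this theorem (the paper itself defers the proof to \cite{S23, SW1, Kuo23}). However, the step on which your whole argument rests is not valid as stated: you transfer \eqref{as.5}, \eqref{as.11} into boundedness of $Q_i(\lambda)$ on $W^{-1}_q(D)$ ``since $W^{-1}_q(D)$ is naturally the dual of $W^1_{q'}(D)$ and $C^\infty_0(D)$ is dense in both''. On a domain $C^\infty_0(D)$ is \emph{not} dense in $W^1_{q'}(D)$; the dual of $W^1_{q'}(D)$ is not the restriction space $H^{-1}_q(D)$ (that space is the dual of $\mathring W^1_{q'}(D)$, the closure of $C^\infty_0(D)$); and the pairing inequality you would need to close the estimate, $|(f,\psi)_D|\le C\|f\|_{H^{-1}_q(D)}\|\psi\|_{W^1_{q'}(D)}$ for $f\in C^\infty_0(D)$ and $\psi=Q_i(\lambda)^*\varphi$ (which does not vanish at $\partial D$), fails because extension by zero is bounded on $H^s_q(D)$ only for $-1+1/q<s<1/q$, not at $s=-1$. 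So the uniform endpoint $Q_1(\lambda):W^{-1}_q\to W^{-1}_q$, and with it your interpolation $(W^{-1}_q,W^1_q)_{(s+1)/2,r}=B^s_{q,r}$ for the case $s\le 0$, is not justified; note that the theorem itself only claims bounds for $s>-1+1/q=-1/q'$, which is exactly this obstruction. (By contrast, the decay endpoint $Q_1(\lambda):L_q(D)\to H^{-1}_q(D)$ with norm $C|\lambda|^{-1/2}$ that you extract from \eqref{as.6} is fine, because there the source pairing is an $L_q$--$L_{q'}$ pairing.) Your closing paragraph acknowledges trouble in the negative range for the decay estimates, but the proposed fix (``shifting the available endpoint set'') is not carried out, and as described it would again route through the unjustified $W^{-1}_q$ transfer.

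The way to repair this, and the way the cited proofs (and the paper's own proof of Lemma \ref{lem:dif1}) proceed, is to keep every duality inside the window $(-1+1/q,1/q)$: prove the assertions for $0<s<1/q$ by interpolating \eqref{as.1}, \eqref{as.3} (resp. \eqref{as.7}, \eqref{as.9}) directly on the couple $(L_q(D),W^1_q(D))$, exactly as in your positive-exponent bookkeeping; for $-1+1/q<s<0$ use duality at the Besov level, writing $\|Q_1(\lambda)f\|_{B^s_{q,r}(D)}\simeq\sup\{|(f,Q_1(\lambda)^*\varphi)_D| : \varphi\in C^\infty_0(D),\ \|\varphi\|_{B^{-s}_{q',r'}(D)}\le 1\}$, which is legitimate because $0<-s<1/q'$ (zero extension bounded, $C^\infty_0(D)$ dense), and then apply the already-proved positive-exponent estimates to $Q_1(\lambda)^*$, for which \eqref{as.5}, \eqref{as.6} (resp. \eqref{as.11}, \eqref{as.12}) play the role of \eqref{as.1}, \eqref{as.3}; finally recover $s=0$ and the mixed-sign source/target cases by one more real interpolation between exponents $\pm\epsilon$. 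Your exponent arithmetic for the $|\lambda|^{-\sigma/2}$ and $|\lambda|^{-(1-\sigma/2)}$ gains is correct, so once the endpoints are obtained this way the interpolation steps go through; but as written the proposal has a genuine gap at every place where a $W^{-1}_q(D)$ endpoint is invoked.
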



\section{On the spectral analysis of  Lam\'e equations}
We shall prove that a solution operator of
Lam\'e equations \eqref{Eq:Lame} has $(s, \sigma, q, r)$ properties.  Our proof is 
divided into the whole space case, the half-space case, the bent half space case,
and the general domain case, which is the standard procedure.   
We start with 
\subsection{The whole space case.}\label{sec:4.1}

In this subsection, we consider the Lam\'e equations: 
\begin{equation}\label{L.1}
\lambda \bu - \alpha\Delta \bu - \beta \nabla\dv\bu = \bg \quad\text{in $\BR^N$}
\end{equation}
for $\lambda \in \Sigma_\epsilon$ with $\epsilon \in (0, \pi/2)$. 
We shall prove 
\begin{thm}\label{thm:L.1} Let $1 < q < \infty$, $1 \leq r \leq \infty$, $-1+1/q < s < 1/q$,
$0 < \epsilon < \pi/2$, and $\lambda_0>0$.  Let $\sigma$ be a small positive number
such that $-1+1/q < s-\sigma < s+\sigma < 1/q$.  Let $\nu \in \{s-\sigma, s, s+\sigma\}$.
Then, there exists an operator $\CS(\lambda) \in {\rm Hol}\,(
\Sigma_\epsilon, \CL(B^\nu_{q,r}(\BR^N)^N, B^{\nu+2}_{q,r}(\BR^N)^N))$ having
$(s, \sigma, q, r)$ properties in $\BR^N$ such that 
for any $\bg \in B^s_{q,r}(\BR^N)^N$, $\bu = \CS(\lambda)\bg$ is a unique solution of
equations \eqref{L.1}.
\end{thm}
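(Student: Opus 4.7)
The plan is to solve \eqref{L.1} explicitly on the Fourier side. Since $\widehat{\Delta\bu}=-|\xi|^2\hat{\bu}$ and $\widehat{\nabla\dv\bu}=-\xi(\xi\cdot\hat{\bu})$, the equation becomes
$$\bigl((\lambda+\alpha|\xi|^2)\bI+\beta\,\xi\otimes\xi\bigr)\hat{\bu}(\xi)=\hat{\bg}(\xi).$$
Decomposing $\hat{\bu}(\xi)$ and $\hat{\bg}(\xi)$ into their components parallel and perpendicular to $\xi$---these are the eigenspaces of the symbol matrix, with eigenvalues $\lambda+(\alpha+\beta)|\xi|^2$ and $\lambda+\alpha|\xi|^2$ respectively, both uniformly nonzero on $\Sigma_\epsilon$ since $\alpha,\alpha+\beta>0$---we obtain the explicit formula
$$\hat{\bu}(\xi) = m_1(\lambda,\xi)\Bigl(\hat{\bg}(\xi)-\frac{\xi(\xi\cdot\hat{\bg}(\xi))}{|\xi|^2}\Bigr)+m_2(\lambda,\xi)\,\frac{\xi(\xi\cdot\hat{\bg}(\xi))}{|\xi|^2},$$
with $m_j(\lambda,\xi):=(\lambda+c_j|\xi|^2)^{-1}$, $c_1=\alpha$, $c_2=\alpha+\beta$. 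We would then set $\CS(\lambda)\bg:=\CF^{-1}[\hat{\bu}]$; uniqueness is immediate from pointwise invertibility of the symbol matrix on $\BR^N\setminus\{0\}$.

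The sector property yields the standard resolvent estimate $|\lambda+c_j|\xi|^2|\ge C_\epsilon(|\lambda|+|\xi|^2)$, hence the symbols $\lambda m_j$, $\lambda^{1/2}\xi_k m_j$, and $\xi_k\xi_\ell m_j$ are Mihlin multipliers of order zero with constants uniform in $\lambda$. Proposition \ref{FMT} then delivers the estimates of type (1) in \eqref{L1prop} for $\ell=0$ on every $B^\nu_{q,r}(\BR^N)$ with $\nu\in\{s-\sigma,\,s,\,s+\sigma\}$; for $\ell=1$ we use $\partial_\lambda m_j=-m_j^2$ together with $|m_j|\le C|\lambda|^{-1}$ to extract the additional $|\lambda|^{-1}$. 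For the gain estimates (2)--(4) we would invoke Theorem \ref{thm:5.2}: setting $Q_2(\lambda):=\CS(\lambda)$ and letting $Q_1(\lambda)$ denote any component of $\bar\nabla^2\CS(\lambda)$ or $\lambda^{1/2}\bar\nabla\CS(\lambda)$, Assumption \ref{assump.1} reduces to $L_q$ Mihlin estimates. For instance, $\|\bar\nabla^2\CS(\lambda)g\|_{L_q}\le C|\lambda|^{-1/2}\|\nabla g\|_{L_q}$ follows by commuting one derivative onto $g$ and bounding $|\xi_k m_j|\le|\xi|/(|\lambda|+|\xi|^2)\le(2\sqrt{|\lambda|})^{-1}$; the dual estimates are identical because the whole-space Lam\'e operator is formally self-adjoint. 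Theorem \ref{thm:5.2} then produces (2) and (3), and repeating the argument with $\partial_\lambda\CS(\lambda)$ yields (4).

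The main obstacle here is really only bookkeeping: each derivative combination in \eqref{L1prop} must be paired with a symbol whose Mihlin constant has the exact claimed $|\lambda|$-scaling. All such checks reduce to scalar inequalities of the form $|\xi|^a/(|\lambda|+|\xi|^2)^b\lesssim|\lambda|^{a/2-b}$, valid for $0\le a\le 2b$, so no conceptual difficulty arises in the whole-space case; the genuine obstructions of the paper are deferred to the half-space, bent half-space, and general-domain analyses of later sections, where explicit Fourier representations are no longer available.
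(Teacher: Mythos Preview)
Your argument is correct, but the route to the $(s,\sigma,q,r)$ properties differs from the paper's. The paper obtains the shifted estimates \eqref{L1prop} directly on the Fourier side: for instance, writing
\[
\lambda^{\sigma/2}\bar\nabla^2\CS(\lambda)\bg
=\CF^{-1}\Bigl[\frac{\lambda^{\sigma/2}\xi_k\xi_\ell}{(1+|\xi|^2)^{\sigma/2}(\lambda+c_j|\xi|^2)}\cdot(1+|\xi|^2)^{\sigma/2}\hat{\bg}\Bigr]
\]
and checking that the bracketed symbol is a Mihlin multiplier of order zero, one obtains the $B^{s+\sigma}\to B^s$ bound with decay $|\lambda|^{-\sigma/2}$ in one stroke via Proposition~\ref{FMT}; the $B^{s-\sigma}\to B^s$ estimate follows by the same trick with $(1+|\xi|^2)^{-\sigma/2}$. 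For the $\pd_\lambda$ estimates the paper uses the operator identity $\pd_\lambda\CS(\lambda)=-\CS(\lambda)\CS(\lambda)$ (a consequence of uniqueness) and composes the already-established bounds. Your path through Theorem~\ref{thm:5.2} is the interpolation-and-duality machinery that the paper reserves for the half-space in Subsection~\ref{sec.4.2}, where a direct Bessel-potential shift is no longer available; in $\BR^N$ it is correct but heavier than necessary. One cosmetic point: your eigenspace formula carries the Leray projector $\xi\otimes\xi/|\xi|^2$, which is singular at $\xi=0$; the paper's equivalent formula \eqref{sol.3.1} absorbs this by writing $m_2-m_1=-\beta|\xi|^2m_1m_2$, so that only smooth symbols appear. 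This is harmless for the estimates (the projector is itself a Mihlin multiplier), but worth noting if you want the operator to act on all of $B^\nu_{q,r}$ without a separate low-frequency argument.
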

\begin{proof}
Applying the divergence to equations \eqref{L.1} gives 
$$\lambda \dv \bu - (\alpha + \beta)\Delta \dv\bu = \dv\bg \quad\text{in $\BR^N$}.
$$
Using the Fourier transform $\CF$ and its inverse transform $\CF^{-1}$, we have
$$\dv\bu = \CF^{-1}\Bigl[\frac{i\xi\cdot\CF[\bg](\xi)}{\lambda+(\alpha+\beta)|\xi|^2}\Bigr].$$
Inserting this formula into \eqref{L.1}, we have
\begin{equation}\label{sol.3.1}\begin{aligned}
\CS(\lambda)\bg&= \CF^{-1}[\frac{\CF[\bg](\xi) + \beta \CF[\dv\bu](\xi)}{\lambda + \alpha|\xi|^2}\Bigr]\\
& = \CF^{-1}\Bigl[\frac{\CF[\bg](\xi)}{\lambda + \alpha|\xi|^2}\Bigr]
+ \beta\CF^{-1}\Bigl[\frac{i\xi i\xi\cdot\CF[\bg](\xi)}{(\lambda + \alpha|\xi|^2)
(\lambda + (\alpha+\beta)|\xi|^2)}\Bigr].
\end{aligned}\end{equation}
As we know well, there exist positive constants $c_1$ and $c_2$ depending on $\alpha$,
$\beta$ and $\epsilon$ such that for any $\lambda \in \Sigma_\epsilon$ there hold:
\begin{equation*}\label{L.2}\begin{aligned}
c_1(|\lambda|^{1/2}+|\xi|)^2  \leq {\rm Re}\, (\lambda 
+\alpha|\xi|^2) &\leq |\lambda + \alpha|\xi|^2| 
\leq c_2(|\lambda|^{1/2}+|\xi|)^2, \\
c_1(|\lambda|^{1/2}+|\xi|)^2 \leq
{\rm Re}\, (\lambda 
+(\alpha+\beta)|\xi|^2) &\leq |\lambda + (\alpha+\beta)|\xi|^2| 
\leq c_2(|\lambda|^{1/2}+|\xi|)^2.
\end{aligned}\end{equation*}
Thus, applying the Fourier multiplier theorem of Mikhlin-H\"ormander type, we have 
\begin{equation}\label{whole.est1}
\|(\lambda, \lambda^{1/2}\bar\nabla, \bar\nabla^2)\CS(\lambda)\bg\|_{B^\nu_{q,r}(\BR^B)}  
\leq C(1+\lambda_0^{-1/2} + \lambda_0^{-1})\|\bg\|_{B^\nu_{q,r}(\BR^N)}.
\end{equation}
\par
Let $0 < \sigma < 1$.  For $\bg \in B^{s+\sigma}_{q,r}(\BR^N)$, we write
\begin{align*}
\lambda^{1/2}\lambda^{\sigma/2}\bar\nabla \CS(\lambda)\bg
&= \CF^{-1}\Bigl[\frac{\lambda^{\frac12+\frac{\sigma}{2}}(1, i\xi)(1+|\xi|^2)^{\sigma/2}\CF[\bg](\xi)}
{(\lambda + \alpha|\xi|^2)(1+|\xi|^2)^{\sigma/2}}\Bigr] \\
&+ \beta\CF^{-1}\Bigl[\frac{\lambda^{\frac12+\frac{\sigma}{2}}(i\xi)i\xi i\xi\cdot
((1+|\xi|^2)^{\sigma/2}\CF[\bg](\xi))}{(\lambda + \alpha|\xi|^2)
(\lambda + (\alpha+\beta)|\xi|^2)(1+|\xi|^2)^{\sigma/2}}\Bigr], 
\end{align*}
Applying the Fourier multiplier theorem of Mikhilin-H\"ormander type, we have
$$\|\lambda^{1/2}\lambda^{\sigma/2}\nabla \CS(\lambda)\bg\|_{B^s_{q,r}(\BR^N)}
\leq C(1+\lambda_0^{-1/2})\|\bg\|_{B^{s+\sigma}_{q,r}(\BR^N)}.
$$
Analogously, we have
$$
\|\lambda^{\frac{\sigma}{2}}\bar\nabla^2\bu\|_{B^s_{q,r}(\BR^N)} \leq 
C(1 + \lambda_0^{-1/2} + \lambda_0^{-1})\|\bg\|_{B^{s+\sigma}_{q,r}(\BR^N)}.
$$
Moreover, since $\bg \in B^{s+\sigma}_{q,r}(\BR^N) \subset B^{s-\sigma}_{q,r}(\BR^N)$,
changing $(1+|\xi|^2)^{\sigma/2}$ by $(1+|\xi|^2)^{-\sigma/2}$, 
we have
\begin{equation}\label{whole.est2}\|(1, \lambda^{-1/2}\bar\nabla)\CS(\lambda)\bg\|_{B^s_{q,r}(\BR^N)}
\leq C(1+\lambda_0^{-1/2})|\lambda|^{-(1-\frac{\sigma}{2})}
\|\bg\|_{B^{s-\sigma}_{q,r}(\BR^N)}.
\end{equation}
Concerning $\pd_\lambda \CS(\lambda)\bg$, differentiating equations \eqref{L.1} 
and using the uniqueness of solutions, we see that 
$\pd_\lambda \CS(\lambda)\bg = -\CS(\lambda)\CS(\lambda)\bg$. Using \eqref{whole.est1}
and \eqref{whole.est2}, we immediately have
\begin{align*}
\|(\lambda, \lambda^{1/2}\bar\nabla, \bar\nabla^2)\pd_\lambda \CS(\lambda)\bg
\|_{B^s_{q,r}(\BR^N)} & \leq C|\lambda|^{-1}\|\bg\|_{B^s_{q,r}(\BR^N)}, \\
\|(\lambda, \lambda^{1/2}\bar\nabla, \bar\nabla^2)\pd_\lambda \CS(\lambda)\bg
\|_{B^s_{q,r}(\BR^N)} & \leq C|\lambda|^{-(1-\frac{\sigma}{2})
}\|\bg\|_{B^{s-\sigma}_{q,r}(\BR^N)}.
\end{align*}
This completes the proof of Theorem \ref{thm:L.1}.
\end{proof}

\subsection{The half-space case.}\label{sec.4.2}
In this section, we consider the Lam\'e equations in the half space, which read as 
\begin{equation}\label{hL.1}
\lambda \bu - \alpha \Delta\bu - \beta\nabla\dv\bu = \bg \quad\text{in $\HS$},
\quad \bu=0 \quad\text{on $\pd\HS$}.
\end{equation}
Notice that $\pd\HS=\{x=(x_1, \ldots, x_N) \in \BR^N \mid x_N=0\}$. 
We shall prove 
\begin{thm}\label{thm:hL.1} Let $1 < q < \infty$, $1 \leq r \leq \infty-$, 
 $-1+1/q < s < 1/q$, $\epsilon \in (0, \pi/2)$ and $\lambda_0>0$.
 Let $\sigma$ be a small positive number such that 
$-1+1/q < s-\sigma < s < s+\sigma < 1/q$.
Let $\nu \in \{s-\sigma, s, s+\sigma\}$. 
Then, there exists an  operator $\CS_h(\lambda) \in {\rm Hol}\,(
\Sigma_\epsilon, \CL(B^\nu_{q,r}(\HS)^N, B^{\nu+2}_{q,r}(\HS)^N))$ 
having $(s, \sigma, q, r)$ properties in $\HS$ such that 
for any $\lambda \in \Sigma_{\epsilon, \lambda_0}$ 
and $\bg \in B^\nu_{q,r}(\HS)^N$, $\bu = \CS_h(\lambda)\bg$  is a unique solution of
\eqref{hL.1}.
\end{thm}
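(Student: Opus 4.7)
The plan is to solve \eqref{hL.1} by the standard two-step reduction. First, I extend $\bg \in B^\nu_{q,r}(\HS)^N$ to $\tilde\bg \in B^\nu_{q,r}(\BR^N)^N$ via a bounded extension and set $\bu^0 := \CS(\lambda)\tilde\bg|_{\HS}$, where $\CS(\lambda)$ is the whole-space solution operator from Theorem \ref{thm:L.1}. Then $\bu^0$ already satisfies all the $(s,\sigma,q,r)$ bounds in $\HS$ since extension and restriction are bounded on Besov spaces in this regularity range, but it carries a nonzero Dirichlet trace $\bh := \bu^0|_{\pd\HS}$. What remains is to build a correction $\bv$ solving the homogeneous Lam\'e equation in $\HS$ with $\bv|_{\pd\HS}=-\bh$, and then to set $\CS_h(\lambda)\bg := \bu^0 + \bv$.

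To construct $\bv$, I apply the partial Fourier transform $\CF'$ in the tangential variables. The resulting second-order ODE system in $x_N$ has characteristic roots $A(\xi',\lambda)=\sqrt{|\xi'|^2+\lambda/\alpha}$ and $B(\xi',\lambda)=\sqrt{|\xi'|^2+\lambda/(\alpha+\beta)}$, both with positive real part for $\lambda\in\Sigma_{\epsilon,\lambda_0}$. The bounded decaying solutions are linear combinations of $e^{-Ax_N}$, $e^{-Bx_N}$, and $M(x_N)=(e^{-Bx_N}-e^{-Ax_N})/(B-A)$, and the Dirichlet condition $\hat\bv(\xi',0)=-\hat\bh(\xi')$ determines the coefficients uniquely, producing a matrix expression in $(A,B,i\xi',\lambda^{1/2})$ whose entries are order-$0$ multiplier symbols in the class of Subsection \ref{subsec:2.4}. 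To avoid working with trace spaces, I apply Volevich's device, writing $\hat\bh(\xi')=-\int_0^\infty\pd_{y_N}\hat\bu^0(\xi',y_N)\,dy_N$, and, after integration by parts in $y_N$, recast $\bv$ as a sum of integral operators of the type $L_i(\lambda)$ in Proposition \ref{prop:2}, with kernels $Be^{-B(x_N+y_N)}$ and $B^2M(x_N+y_N)$ multiplied by order-$0$ tangential symbols.

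Next, I verify the $(s,\sigma,q,r)$ properties. For $(\lambda,\lambda^{1/2}\bar\nabla,\bar\nabla^2)\bv$, each component is an $L_i(\lambda)$-operator with an order-$0$ multiplier symbol, so Proposition \ref{prop:2} gives uniform $L_q(\HS)$-boundedness; higher-order tangential and normal derivatives are handled in the same way using that differentiation in $x_N$ brings down a factor of $A$ or $B$ absorbed into the symbol class. The corresponding dual operators have kernels symmetric in $(x_N,y_N)$ and conjugate tangential multipliers, so they obey the analogous bounds. This yields a decomposition $\CS_h(\lambda)=Q_1(\lambda)+Q_2(\lambda)$ meeting Assumption \ref{assump.1}, and Theorem \ref{thm:5.2} then lifts the $W^i_q$-bounds to Besov estimates for each $\nu\in\{s-\sigma,s,s+\sigma\}$, producing simultaneously the $(\lambda,\lambda^{1/2}\bar\nabla,\bar\nabla^2)$ estimates and the two gain-of-regularity estimates in \eqref{L1prop}. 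The $\pd_\lambda$ estimates follow by differentiating the symbols, using $\pd_\lambda A,\pd_\lambda B = O(|\lambda|^{-1/2}(|\lambda|^{1/2}+|\xi'|)^{-1})$, which only lowers the symbol order; uniqueness then promotes these to the bounds on $\pd_\lambda\CS_h(\lambda)\bg$ as operators on Besov spaces.

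The main obstacle is organizational rather than conceptual: the matrix formula for $\bv$ has many summands, and after Volevich's trick each summand must be sorted into one of the $L_i$ templates of Proposition \ref{prop:2} and further into the $Q_1/Q_2$ splitting so that all four inequalities of Definition \ref{dfn:L1int} can be read off uniformly. In particular, tracking which pieces contribute $|\lambda|^{-\sigma/2}$ gain versus $|\lambda|^{-(1-\sigma/2)}$ gain, and checking the conjugate symbol bounds needed for the dual estimates feeding Theorem \ref{thm:5.2}, is where the bookkeeping becomes delicate and must be done with care.
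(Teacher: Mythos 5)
Your proposal is essentially the paper's own proof: extend $\bg$ to $\BR^N$ and use the whole-space operator $\CS(\lambda)$ of Theorem \ref{thm:L.1}, correct the boundary trace by the explicit partial-Fourier solution with the roots $A,B$ and Volevich's trick, estimate the resulting kernel operators and their duals in $L_q$ and $W^1_q$ via Proposition \ref{prop:2}, lift to Besov bounds (including the $|\lambda|^{-\sigma/2}$ and $|\lambda|^{-(1-\sigma/2)}$ gains) with Theorem \ref{thm:5.2}, and obtain the $\pd_\lambda$ bounds (the paper uses $\pd_\lambda\CS_h=-\CS_h\CS_h$; your direct symbol differentiation is also covered by the $L_3,L_4$ kernels of Proposition \ref{prop:2}). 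The only deviations are minor: the paper takes the tailored even/odd extension $\bg_e$ so that the normal trace vanishes and only the tangential compensation problem \eqref{july.1} needs an explicit formula (your generic extension forces you to solve the half-space Lam\'e system with full Dirichlet data, which works but adds bookkeeping), and in Theorem \ref{thm:5.2} the operators $Q_1,Q_2$ are not summands of $\CS_h(\lambda)$ but the separate derivative-weighted families $(\lambda,\lambda^{1/2}\bar\nabla,\bar\nabla^2)\CS_h(\lambda)$ and $(1,\lambda^{-1/2}\bar\nabla)\CS_h(\lambda)$ — the $L_q$/$W^1_q$ bounds you list for the operator and its dual are exactly the hypotheses needed, so this is a misdescription rather than a gap.
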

In what follows, we shall prove Theorem \ref{thm:hL.1}. 
Since we know  solution operators
in $\BR^N$, we  consider the compensation equations:
\begin{equation}\label{july.1}
\lambda \bu-\alpha\Delta \bu - \beta\nabla\dv\bu = 0\quad\text{in $\HS$}, 
\quad u_j|_{x_N=0} = h_j|_{x_N=0}, \quad u_N|_{x_N=0}=0
\end{equation}
for $j=1, \ldots, N-1$.  Let $\bh' = (h_1, \ldots, h_{N-1})$.
To obtain a solution formula of \eqref{july.1}, we apply the 
partial Fourier transform $\CF'$ with respect to the tangential variables 
$x'=(x_1, \ldots, x_{N-1})$ and its inverse transform $\CF^{-1}_{\xi'}$ with respect
to the dual variables $\xi' = ( \xi_1, \ldots, \xi_{N-1}) \in \BR^{N-1}$, and then we have
the system of ordinary differential equations:
\begin{alignat*}2
(\lambda +\alpha |\xi'|^2 -\alpha D_N^2)\CF'[u_j] -\beta i\xi_j(i\xi'\cdot\CF'[\bu'] + D_N\CF'[u_N]) = 0&&
\quad&(x_N>0), \\
(\lambda +\alpha |\xi'|^2 -\alpha D_N^2)\CF'[u_N] -\beta D_N(i\xi'\cdot\CF'[\bu'] + D_N\CF'[u_N]) = 0&&
\quad&(x_N>0), \\
\CF'[u_j]|_{x_N=0} = \CF'[h_j](\xi', 0), \quad \CF'[u_N]|_{x_N=0} = 0&
\end{alignat*}
for $j=1, \ldots, N-1$. Here, we have written 
$i\xi'\cdot \CF'[\bu'] = \sum_{j=1}^{N-1}i\xi_j\CF'[u_j]$. 
 Multiplying the first equation with $i\xi_j$, differentiating the second equation
and summing up the resultant equations, we have
$$ (\lambda + (\alpha+\beta)|\xi'|^2 - (\alpha+\beta)D_N^2)(i\xi'\cdot\CF'[\bu'] + D_N\CF'[u_N]) = 0.$$
Applying this formula to the equations above implies that 
$$(\lambda + \alpha|\xi'|^2- D_N^2)(\lambda + (\alpha+\beta)|\xi'|^2- D_N^2)\CF[u_j] = 0 
\quad(j=1, \ldots, N).
$$
Thus, $A = \sqrt{(\alpha+\beta)^{-1}\lambda + |\xi'|^2}$ and $B = \sqrt{\alpha^{-1}\lambda + 
|\xi'|^2}$ are two characteristic roots, where we choose the branches such that 
${\rm Re} A > 0$ and ${\rm Re}\, B > 0$. Set 
$$\CF'[u_j] = m_je^{-Bx_N} + n_j(e^{-Ax_N} - e^{-Bx_N}).$$
Substituting these formulas into the equations, we have
\begin{align*}
&\alpha(B^2-A^2)n_j - \beta i\xi_j(i\xi'\cdot \bn' - An_N) = 0, 
\quad \beta i\xi_j(i\xi'\cdot\bm'- i\xi'\cdot\bn'- m_NB+n_NB) = 0,  \\
&\alpha(B^2-A^2)n_N + \beta A(i\xi'\cdot \bn' - An_N) = 0, \quad
\beta B(i\xi'\cdot\bm'- i\xi'\cdot\bn'- m_NB+n_NB) = 0, \\
&m_i=\hat h_j,  \quad m_N=0,
\end{align*}
for $j=1, \ldots, N-1$, 
where we have set  $i\xi'\cdot\bm' = \sum_{j=1}^{N-1}i\xi_j m_j$, 
$i\xi'\cdot \bn' = \sum_{j=1}^{N-1}i\xi_jn_j$ and $\hat h_j = \CF'[h_j]|_{x_N=0}$.
Thus, we have
\begin{align*}
n_j = \frac{\beta i\xi_j}{\alpha(B^2-A^2)}(i\xi'\cdot\bn'-An_N), \quad 
n_N = -\frac{\beta A}{\alpha(B^2-A^2)}(i\xi'\cdot\bn'-An_N),  \quad
i\xi'\cdot\bn'-n_NB = i\xi'\cdot \hat \bh',
\end{align*}
where we have set $i\xi'\cdot \hat\bh' = \sum_{j=1}^{N-1}i\xi_j \hat h_j$. Moreover, 
we have
$\alpha(B^2-A^2)i\xi'\cdot\bn+ \beta|\xi'|^2(i\xi'\cdot \bn'-An_N) = 0$, which implies that 
$$i\xi'\cdot\bn' = \frac{\beta A|\xi'|^2}{\alpha(B^2-A^2)+\beta|\xi'|^2} n_N.$$
Thus, 
$$\Bigl(\frac{\beta A|\xi'|^2}{\alpha(B^2-A^2)+\beta|\xi'|^2}-B\Bigr)n_N = i\xi'\cdot\hat \bh',
$$
which implies that 
$$i\xi'\cdot\hat\bh' = \frac{-(\beta|\xi'|^2+\alpha B(A+B))(B-A)}
{(B-A)(\alpha(A+B)B+\beta|\xi'|^2)}n_N.$$
From this, we  have 
$$n_N = -\frac{\alpha(B^2-A^2) + \beta|\xi'|^2}{(B-A)
(\alpha B(A+B) + \beta|\xi'|^2)}i\xi'\cdot\hat \bh'.
$$
Also, 
$$i\xi'\cdot\bn' = \frac{\beta A|\xi|^2}{\alpha(B^2-A^2)+\beta|\xi'|^2}n_N
= -\frac{\beta A|\xi'|^2}{(B-A)(\alpha B(A+B) + \beta|\xi'|^2)}i\xi'\cdot\hat \bh'.$$
Thus, 
$$i\xi'\cdot\bn' - An_N = \frac{-\beta A|\xi'|^2 + A(\alpha(B^2-A^2)+\beta|\xi'|^2)}
{(B-A)(\alpha B(A+B) + \beta|\xi'|^2)}
= \frac{\alpha A(B^2-A^2)}{(B-A)(\alpha B(A+B) + \beta|\xi'|^2)}i\xi'\cdot\hat\bh'.$$
Using this formula, we have
$$
n_j = \frac{\beta i\xi_j A}{(B-A)(\alpha B(A+B) + \beta|\xi'|^2)}i\xi'\cdot\hat \bh', \quad
n_N = -\frac{\beta  A^2}{(B-A)(\alpha B(A+B) + \beta|\xi'|^2)}i\xi'\cdot\hat \bh'.
$$
To obtain
$$\alpha B(A+B) + \beta|\xi'|^2 = A((\alpha + \beta)A + \alpha B),$$
we use the formulas:
$$(\alpha+\beta)A^2= \lambda + (\alpha+\beta)|\xi'|^2, 
\quad \alpha B^2 = \lambda + \alpha |\xi'|^2.$$
Finally, we arrive at 
$$
n_j = \frac{\beta i\xi_j }{(B-A)((\alpha+\beta)A+\alpha B)}i\xi'\cdot\hat \bh', \quad
n_N = -\frac{\beta  A}{(B-A)((\alpha+\beta)A + \alpha B)}i\xi'\cdot\hat \bh'.
$$
Set 
$$M(x_N) = \frac{e^{-Bx_N}-e^{-Ax_N}}{B-A}, \quad L(\lambda, \xi') 
= (\alpha+\beta)A + \alpha B.$$
The $L(\lambda, \xi')$ is called the Lopatinski determinant of the system of equations \eqref{comp.1}.
We may have 
\begin{equation*}\label{hsol.1}\begin{aligned}
\CF'[u_j](\xi', x_N) &= \hat 
h_je^{-Bx_N} + M(x_N)\frac{\beta i\xi_j }{L(\lambda, \xi')}i\xi'\cdot\hat \bh',\\
\CF'[u_N](\xi', x_N) & = -M(x_N)\frac{\beta A}{L(\lambda, \xi')}i\xi'\cdot\hat \bh.
\end{aligned}\end{equation*}
Noting that $D_NM(x_N) = -e^{-Bx_N} - AM(x_N)$ and using Volevich's trick, we write $\CF'[u_j]$ and $\CF'[u_N]$ as follows:
\begin{align*}
\CF'[u_j](\xi', x_N)& = \int^\infty_0Be^{-B(x_N+y_N)}\CF'[h_j](\xi', y_N)\,dy_N
-\int^\infty_0 e^{-B(x_N+y_N)}\CF'[D_Nh_j](\xi', y_N)\,dy_N \\
&+ \int^\infty_0(e^{-B(x_N+y_N)} +AM(x_N+y_N))\frac{\beta i\xi_j}{L(\lambda, \xi')}
i\xi'\cdot\CF'[\bh'](\xi', y_N)\,dy_N \\
&-\int^\infty_0 M(x_N+y_N)\frac{\beta i\xi_j}{L(\lambda, \xi')}i\xi'\cdot\CF'[D_N\bh'](\xi', y_N)\,dy_N, \\
\CF'[u_N](\xi', x_N) & = 
- \int^\infty_0(e^{-B(x_N+y_N)} +AM(x_N+y_N))\frac{\beta A}{L(\lambda, \xi')}
i\xi'\cdot\CF'[\bh'](\xi', y_N)\,dy_N \\
&+\int^\infty_0 M(x_N+y_N)\frac{\beta A}{L(\lambda, \xi')}i\xi'\cdot\CF'[D_N\bh'](\xi', y_N)\,dy_N.
\end{align*}
Moreover, using the formula: $1 = (\alpha^{-1}\lambda + |\xi'|^2)B^{-2}$ and writing
$\Delta' h_j=\sum_{k=1}^{N-1}D_k^2 h_j$ and $\dv'\bh' = \sum_{k=1}^{N-1}D_kh_k$, 
 we rewrite the formulas above as follows:
\allowdisplaybreaks 
\begin{align*}
\CF'[u_j](\xi', x_N) & = \int^\infty_0Be^{-B(x_N+y_N)}\frac{1}{B^2}\CF'[(\lambda-\Delta')h_j](\xi', y_N)\,dy_N
\\ 
&-\int^\infty_0Be^{-B(x_N+y_N)}\frac{\alpha^{-1}\lambda^{1/2}}{B^3}\CF'[\lambda^{1/2}D_Nh_j](\xi', y_N)\,dy_N 
\\
& + \sum_{\ell=1}^{N-1}\int^\infty_0Be^{-B(x_N+y_N)}\frac{i\xi_\ell}{B^3}
\CF'[D_\ell D_Nh_j](\xi', y_N)\,dy_N, \\
&+ \int^\infty_0Be^{-B(x_N+y_N)}\frac{\alpha^{-1}\lambda^{1/2}}{B^3}
\frac{\beta i\xi_j}{L(\lambda, \xi')}\CF'[\lambda^{1/2}\dv'\bh'](\xi', y_N)\,dy_N\\
&-\sum_{\ell=1}^{N-1}\int^\infty_0Be^{-B(x_N+y_N)}\frac{i\xi_\ell}{B^3}\frac{\beta i\xi_j}{L(\lambda, \xi')}
\CF'[D_\ell\dv'\bh'](\xi', y_N)\,dy_N\\
& + \int^\infty_0 B^2M(x_N+y_N)\frac{\beta A}{B^2L(\lambda, \xi')}\CF'[D_j\dv\bh'](\xi', y_N)\,dy_N\\
& + \int^\infty_0 B^2M(x_N+y_N)\frac{\beta A}{B^2L(\lambda, \xi')}\CF'[D_j\dv'\bh'](\xi', y_N)\,dy_N
\\
& -\int^\infty_0B^2M(x_N+y_N)\frac{\beta i\xi_j}{B^2L(\lambda, \xi')}\CF'[D_N\dv'\bh'](\xi', y_N)\,dy_N;
\\
\CF'[u_N](\xi', y_N) & = 
- \int^\infty_0Be^{-B(x_N+y_N)}\frac{\alpha^{-1}\lambda^{1/2}}{B^3}
\frac{\beta A}{L(\lambda, \xi')}\CF'[\lambda^{1/2}\dv'\bh'](\xi', y_N)\,dy_N\\
&+\sum_{\ell=1}^{N-1}\int^\infty_0Be^{-B(x_N+y_N)}\frac{i\xi_\ell}{B^3}\frac{\beta A}{L(\lambda, \xi')}
\CF'[D_\ell\dv'\bh'](\xi', y_N)\,dy_N\\
& +\int^\infty_0 B^2M(x_N+y_N)\frac{\beta A^2\alpha^{-1/2}\lambda^{1/2}}{B^4L(\lambda, \xi')}\CF'[\lambda^{1/2}\dv\bh'](\xi', y_N)\,dy_N\\
& - \sum_{\ell=1}^{N-1}\int^\infty_0 B^2M(x_N+y_N)\frac{\beta A^2i\xi_\ell}{B^4L(\lambda, \xi')}\CF'[D_j\dv'\bh'](\xi', y_N)\,dy_N
\\
& +\int^\infty_0B^2M(x_N+y_N)\frac{\beta A}{B^2L(\lambda, \xi')}\CF'[D_N\dv'\bh'](\xi', y_N)\,dy_N.
\end{align*}
There exist two positive constants $c_1 < c_2$ such that 
\begin{equation*}\label{lop.1}
c_1(|\lambda|^{1/2}+|\xi'|) \leq {\rm Re}((\alpha + \beta)A + \alpha B)
\leq |L(\lambda, \xi')| \leq c_2(|\lambda|^{1/2}+|\xi'|).
\end{equation*}
In particular, $L(\lambda, \xi')^{-1}$ is the order $-1$ symbol. 
Let $\CD_\lambda \bh' = (\lambda\bh', \lambda^{1/2}\nabla\bh', \nabla^2\bh')$.  Then, there exist
two matrices of order $-2$ symbols  $\CM_1(\lambda, \xi')$ and $\CM_2(\lambda, \xi')$
such that $\bu(x)$ can be written as 
\begin{equation*}\label{hsol.2}\begin{aligned}
\bu(x) & =\int^\infty_0\CF^{-1}_{\xi'}[Be^{-B(x_N+y_N)} \CM_1(\lambda, \xi') \CF'[\CD_\lambda\bh'](\xi', y_N)]
\,dy_N \\
& + \int^\infty_0\CF^{-1}_{\xi'}[B^2M(x_N+y_N) \CM_2(\lambda, \xi') \CF'[\CD_\lambda\bh'](\xi', y_N)]
\,dy_N.
\end{aligned}\end{equation*}

Let $H_1 = (H_{11}, \ldots, H_{1N-1})$, $H_2 = (H_{2jk} \mid j =1\ldots, N, k=1, \ldots, N-1)$ and 
$H_3 = (H_{3jk\ell} \mid j, k=1, \ldots, N, \ell=1, \ldots, N-1)$ and $H_{1j}$, $H_{2jk}$ and $H_{3jk\ell}$
are  corresponding variables to $\lambda h_j$, $\lambda^{1/2}D_jh_k$ and $D_jD_kh_\ell$, respectively.
Set $H =(H_1, H_2, H_3)$, which is an $(N-1)(1+N+N^2)$ vector. 
 Define an operator $\CT_h(\lambda)$ by
\begin{align*}
\CT_h(\lambda)H
& = \int^\infty_0\CF^{-1}_{\xi'}[Be^{-B(x_N+y_N)} \CM_1(\lambda, \xi') \CF'[H](\xi', y_N)]
\,dy_N \\
& + \int^\infty_0\CF^{-1}_{\xi'}[B^2M(x_N+y_N) \CM_2(\lambda, \xi') \CF'[H](\xi', y_N)]
\,dy_N.
\end{align*}
Notice that 
\begin{equation*}\label{sol.3.1}
\CT_h(\lambda)\CD_\lambda\bh'= \bu
\end{equation*}
We shall show the following theorem concerning $\CT_h(\lambda)$.
\begin{thm}\label{thm:3.1} Let $1 < q < \infty$, $1 \leq r \leq  \infty-$ and $-1+1/q <s < 1/q$. 
Set $m(N) = (N-1)(1 + N + N^2)$.  Then, for any
$\lambda \in \Lambda_{\epsilon}$ and $H \in B^s_{q,r}(\HS)^{m(N)}$, 
there holds 
\begin{align}\label{5.1.1}
 \|(\lambda, \lambda^{1/2}\bar\nabla, \bar\nabla^2) \CT_h(\lambda)H\|_{B^s_{q,r}(\HS)}
 & \leq C\|H\|_{B^s_{q,r}(\HS)}.
\end{align}

Moreover, let $\sigma>0$ be a number such that 
$-1+1/q < s-\sigma < s+\sigma < 1/q$.  Then, for any
$\lambda \in \Lambda_{\epsilon}$ and $H \in C^\infty_0(\HS)^{m(N)}$, 
there hold 
\begin{align}\label{5.2.1}
\|(\lambda^{1/2}\bar\nabla, \bar\nabla^2) \CT_h(\lambda)H\|_{B^s_{q,r}(\HS)} 
& \leq C|\lambda|^{-\frac{\sigma}{2}}\|H\|_{B^{s+\sigma}_{q,r}(\HS)},
\\
\|(1, \lambda^{-1/2}\bar\nabla)\CT_h(\lambda)H\|_{B^s_{q,r}(\HS)}
& \leq 
C|\lambda|^{-(1-\frac{\sigma}{2})}\|H\|_{B^{s-\sigma}_{q,r}(\HS)}
\label{5.2.3} 
\end{align}
\end{thm}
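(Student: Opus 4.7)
The plan is to deduce each of \eqref{5.1.1}, \eqref{5.2.1}, and \eqref{5.2.3} from Theorem \ref{thm:5.2} by selecting appropriate operators $Q_1(\lambda)$ or $Q_2(\lambda)$ built from $\CT_h(\lambda)$ and verifying the corresponding conditions of Assumption \ref{assump.1}. The required $L_q$- and $W^1_q$-type bounds (and their duals) will come from Proposition \ref{prop:2} after rewriting each operator as a finite sum of $L_1(\lambda)$- and $L_2(\lambda)$-type integrals with multiplier symbols of the appropriate order.

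First I would observe, from the explicit formulas that precede the statement, that $\CT_h(\lambda)H$ is a finite sum of integrals of the $L_1(\lambda)$- and $L_2(\lambda)$-types with multiplier matrices $\CM_1(\lambda,\xi')$ and $\CM_2(\lambda,\xi')$ of order $-2$ (the factor $L(\lambda,\xi')^{-1}$ contributes order $-1$, the remaining algebraic factors order $+1$). Applying any of the multipliers $\lambda$, $\lambda^{1/2}i\xi_k$, or $i\xi_k i\xi_\ell$ raises the total order by $2$, while $\lambda^{1/2}\partial_N$ and $\partial_N^2$ act on the exponentials through $\partial_N e^{-B(x_N+y_N)}=-Be^{-B(x_N+y_N)}$ and $\partial_N M(x_N+y_N)=-e^{-B(x_N+y_N)}-AM(x_N+y_N)$. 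In every case the result is a sum of $L_1$ and $L_2$ integrals whose combined multiplier is of order $0$; where a rewritten kernel $m_0\,e^{-B(x_N+y_N)}$ appears, one absorbs it into the prescribed template via $m_0\,e^{-B(x_N+y_N)}=(m_0/B)\cdot Be^{-B(x_N+y_N)}$ (the factor $m_0/B$ being of order one less). Proposition \ref{prop:2} then gives the uniform-in-$\lambda$ $L_q(\HS)$-bound that is the $i=0$ case of \eqref{as.1} for $Q_1(\lambda)=(\lambda,\lambda^{1/2}\bar\nabla,\bar\nabla^2)\CT_h(\lambda)$.

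For the $i=1$ case of \eqref{as.1}, tangential derivatives commute with $\CT_h(\lambda)$ and simply relocate onto $H$, so the previous $L_q$-bound already yields $\|D_{x_j}Q_1(\lambda)H\|_{L_q(\HS)}\le C\|H\|_{W^1_q(\HS)}$ for $j<N$. The normal derivative $\partial_N Q_1(\lambda)H$ is handled by integrating by parts in $y_N$ via $\partial_{x_N}=\partial_{y_N}$ on $e^{-B(x_N+y_N)}$ and $M(x_N+y_N)$; this transfers one derivative onto $H$ and produces a Poisson-type boundary term at $y_N=0$. The volume contribution fits the $L_1, L_2$ templates with an order-$0$ symbol, while the boundary term is controlled by an $L_q(\HS)$-estimate for a Poisson integral combined with the trace inequality $\|H(\cdot,0)\|_{B^{1-1/q}_{q,q}(\BR^{N-1})}\le C\|H\|_{W^1_q(\HS)}$. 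By the symmetry of $Be^{-B(x_N+y_N)}$ and $B^2M(x_N+y_N)$ in $(x_N,y_N)$ and since transposition preserves the order of the matrix symbols, the dual $Q_1(\lambda)^*$ has the same structure and \eqref{as.5} follows. The gain inequalities \eqref{as.3}, \eqref{as.6}, \eqref{as.9}, and \eqref{as.12} come from exactly the same analysis but with one fewer order-raising factor: the combined symbol then lies at order $-1$, and Proposition \ref{prop:2} converts such an integral into the $|\lambda|^{-1/2}$-gain bound by trading an extra $|\lambda|^{1/2}$ against a factor of $B$ in the kernel.

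With Assumption \ref{assump.1} verified, Theorem \ref{thm:5.2}(1) applied to $Q_1=(\lambda,\lambda^{1/2}\bar\nabla,\bar\nabla^2)\CT_h(\lambda)$ yields \eqref{5.1.1}; applied to $Q_1=(\lambda^{1/2}\bar\nabla,\bar\nabla^2)\CT_h(\lambda)$ together with the gain estimates it yields \eqref{5.2.1}; and Theorem \ref{thm:5.2}(2) applied to $Q_2=(1,\lambda^{-1/2}\bar\nabla)\CT_h(\lambda)$ yields \eqref{5.2.3}. I expect the main obstacle to be the normal-derivative step in the $W^1_q$-verification: the Poisson-type boundary term produced by integration by parts in $y_N$ requires a separate half-space Poisson-kernel $L_q$-estimate together with a trace argument, and care is needed to keep the $\lambda$-dependence sharp so that the order-$0$ (resp. $-1$) character of the combined symbol is preserved. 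Once this is handled, all remaining pieces reduce to routine bookkeeping of symbol orders and applications of Proposition \ref{prop:2}.
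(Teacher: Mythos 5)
Your proposal follows essentially the same route as the paper: reduce everything to the templates of Proposition \ref{prop:2} by bookkeeping of symbol orders (the matrices $\CM_1,\CM_2$ being of order $-2$, so that $\lambda$, $\lambda^{1/2}\bar\nabla$, $\bar\nabla^2$ restore order $0$), gain the $W^1_q$ and $|\lambda|^{-1/2}$-type estimates by integrating by parts in $y_N$ so that one derivative lands on $H$, obtain the dual estimates from the $(x_N,y_N)$-symmetry of the kernels (the paper's $\CT_h^*(\lambda)$, defined by exchanging $\CF'$ and $\CF^{-1}_{\xi'}$), verify Assumption \ref{assump.1}, and conclude \eqref{5.1.1}, \eqref{5.2.1} from Theorem \ref{thm:5.2}\,\thetag1 and \eqref{5.2.3} from Theorem \ref{thm:5.2}\,\thetag2 with $Q_2=(1,\lambda^{-1/2}\bar\nabla)\CT_h(\lambda)$. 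The one point where you deviate is the step you single out as the main obstacle, and it is in fact a non-issue: Assumption \ref{assump.1} only has to be checked for $H\in C^\infty_0(\HS)^{m(N)}$ (this suffices because $C^\infty_0(\HS)$ is dense in $B^s_{q,r}(\HS)$ for $1\leq r\leq\infty-$ and $-1+1/q<s<1/q$, which is exactly how the paper reduces to smooth data), and such $H$ vanish identically near $x_N=0$, so the boundary term produced by the $y_N$-integration by parts is zero. No half-space Poisson-kernel estimate and no trace inequality are needed, and introducing them would add machinery not established in the paper (with the attendant risk of losing the sharp $\lambda$-dependence you rightly worry about); even if one insisted on keeping a nonzero trace, the boundary term $\CF^{-1}_{\xi'}[m(\lambda,\xi')e^{-Bx_N}\CF'[H](\xi',0)]$ could be folded back into the Proposition \ref{prop:2} templates by another application of Volevich's trick rather than by trace theory. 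With that detour removed, your verification of \eqref{as.1}, \eqref{as.3}, \eqref{as.5}--\eqref{as.12} and the subsequent application of Theorem \ref{thm:5.2} coincides with the paper's argument, and the order-counting you describe (an order $-1$ combined symbol yielding a $|\lambda|^{-1/2}$ gain because $|\lambda|^{1/2}\leq|\lambda|^{1/2}+|\xi'|$) is the correct mechanism behind the gain estimates.
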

\begin{proof}
In what follows, we shall estimate $\CT_h(\lambda)$ using 
Theorem \ref{thm:5.2} in Sect. \ref{sec.3}. 
Notice that 
$\|f\|_{H^1_q(\HS)} = \|\bar\nabla f\|_{L_q(\HS)}$ and $\|f\|_{H^2_q(\HS)} = 
\|\bar\nabla^2f\|_{L_q(\HS)}$.  In what follows, 
we may assume that $H \in C^\infty_0(\HS)^{m(N)}$, because 
$C^\infty_0(\HS)$ is dense in $B^s_{q,r}(\HS)$ for $1 < q < \infty$, $1 \leq r \leq \infty-$ and 
$-1+1/q < s < 1/q$ (cf. Proposition 2.24, Lemma 2.32, and
Corollaries 2.26 and 2.34 in \cite{Gaudin}). 
Using the formulas:
$$\pd_N^\ell M(x_N ) = (-1)^\ell(A^\ell M(x_N) + \frac{A^\ell-B^\ell}{A-B}e^{-Bx_N})\quad(\ell \geq 1),\\
$$
and setting 
\begin{align*}
\CM_1^{(0)}(\lambda) &= \CM_1(\lambda), \quad \CM_1^{(\ell)}(\lambda)
= (-B)^\ell \CM_1(\lambda) + (-1)^\ell \frac{A^\ell-B^{\ell}}{A-B} \CM_2(\lambda) \quad(\ell \geq 1), 
\\
\CM_2^{(0)}(\lambda) &= \CM_2(\lambda), \quad
\CM_2^{(\ell)}(\lambda) = (-1)^\ell A^\ell \CM_2(\lambda) \quad(\ell \geq 2).
\end{align*}
for the notational simplicity, 
we write
\begin{align}
\pd_N^\ell \CT_h(\lambda)H = 
\int^\infty_0\CF^{-1}_{\xi'}&\Bigl[(\CM_{1}^{(\ell)}(\lambda)\CF'[H](\xi', y_N)Be^{-B(x_N+y_N)} 
\nonumber \\
&+ 
\CM_{2}^{(\ell)}(\lambda)\CF'[H](\xi', y_N)B^2M(x_N+y_N)\Bigr](x')\,dy_N.
\label{dif.1}
\end{align}
Using these symbols, we can write
\begin{align*}
\lambda^{k}\pd_{x'}^{\kappa'}\pd_N^\ell \CT_h(\lambda)H = 
\int^\infty_0\CF^{-1}_{\xi'}\Bigl[(&\lambda^{k}(i\xi')^{\kappa'}\CM_{1}^{(\ell)}(\lambda)
\CF'[H](\xi', y_N)Be^{-B(x_N+y_N)} 
\\
+ 
&\lambda^{k}(i\xi')^{\kappa'}\CM_{2}^{(\ell)}(\lambda)\CF'[H](\xi', y_N)B^2M(x_N+y_N)\Bigr](x')\,dy_N.
\end{align*}
If $2k+|\kappa'| + \ell\leq 2$, then $\lambda^{k}(i\xi')^{\kappa'}\CM_{1}^{(\ell)}(\lambda)
\in \BM_0$ and $\lambda^{k}(i\xi')^{\kappa'}\CM_{2}^{(\ell)}(\lambda) \in \BM_0$.  Thus, 
by Proposition \ref{prop:2} we have
\begin{equation}\label{5.3}
\|(\lambda, \lambda^{1/2}\bar\nabla, \bar\nabla^2)\CT_h(\lambda)H\|_{L_q(\HS)} 
\leq C\|H\|_{L_q(\HS)}.
\end{equation}
To obtain the estimate in $W^1_q(\HS)$, noting that $H \in C^\infty_0(\HS)^{m(N)}$,
using the formulas:
$$
\pd_N (-B)^{-1} e^{-B(x_N+y_N)} =e^{-B(x_N+y_N)}, \quad 
\pd_N (A^{-1}M(x_N+y_N) - (AB)^{-1}e^{-B(x_N+y_N)}) = M(x_N+y_N) 
$$
and setting 
$$\tilde\CM^{(\ell-1)}_1(\lambda) = (B^{-1}\CM^{\ell}_1(\lambda)+ A^{-1}\CM^{\ell}_2(\lambda)),
\quad \tilde \CM^{(\ell-1)}_2(\lambda) = -A^{-1}\CM^{\ell}_2(\lambda),
$$
by integration
by parts, we have 
\begin{align*}
\pd_N^\ell \CT_h(\lambda)H = 
\int^\infty_0\CF^{-1}_{\xi'}\Bigl[
&\tilde\CM^{(\ell-1)}_1(\lambda)\CF'[\pd_NH](\xi', y_N)Be^{-B(x_N+y_N)} \nonumber \\
&+ \tilde\CM^{(\ell-1)}_2(\lambda)\CF'[\pd_NH](\xi', y_N)B^2M(x_N+y_N)\Bigr](x')\,dy_N.
\label{dif.3}
\end{align*}
Thus, we have
\begin{align*}
\lambda^{k}\pd_{x'}^{\kappa'}\pd_N^\ell \CT_h(\lambda)H = 
\int^\infty_0\CF^{-1}_{\xi'}\Bigl[
&\lambda^{k}(i\xi')^{\kappa'}(\tilde\CM^{(\ell-1)}_1(\lambda)
\CF'[\pd_NH](\xi', y_N)Be^{-B(x_N+y_N)} \\
&-\lambda^{k}(i\xi')^{\kappa'}\tilde\CM_2^{(\ell-1)}(\lambda)
\CF'[\pd_NH](\xi', y_N)B^2M(x_N+y_N)\Bigr](x')\,dy_N.
\end{align*}
If $2k+|\kappa'| + \ell \leq 3$, both  
$\lambda^{k}(i\xi')^{\kappa'}\tilde\CM^{(\ell-1)}_1(\lambda)$ and 
$\lambda^{k}(i\xi')^{\kappa'}\tilde\CM^{(\ell-1)}_2(\lambda)$ are 
order $0$ symbols, and so by Proposition \ref{prop:2}, we have
\begin{equation}\label{5.4}\begin{aligned}
\|(\lambda, \lambda^{1/2}\bar\nabla, \bar\nabla^2) \CT_h(\lambda)H\|_{W^1_q(\HS)} 
\leq C\|H\|_{W^1_q(\HS)}, \\
\|(\lambda^{1/2}\bar\nabla, \bar\nabla^2)\CT_h(\lambda)H\|_{L_q(\HS)}
 \leq C|\lambda|^{-1/2}\|H\|_{W^1_q(\HS)}.
\end{aligned}\end{equation}
\par
We next consider $\CT_h^*(\lambda)$, which is defined by exchanging $\CF'$ and $\CF^{-1}_{\xi'}$
in the formula of $\CT_h(\lambda)$.  Namely, 
\begin{align*}
\CT_h^*(\lambda)H = 
\int^\infty_0\CF'\Bigl[&\CM_{1}(\lambda)\CF^{-1}_{\xi'}[H](\xi', y_N)Be^{-B(x_N+y_N)} \\
&+ 
\CM_{2}(\lambda)\CF^{-1}_{\xi'}[H](\xi', y_N)B^2M(x_N+y_N)\Bigr](x')\,dy_N.
\end{align*}
Then, employing the same argument as in the proof of  \eqref{5.3} and \eqref{5.4}, we have
\begin{equation}\label{5.5}\begin{aligned}
\|\lambda, \lambda^{1/2}\bar\nabla, \bar\nabla^2) \CT_h^*(\lambda)H\|_{L_{q'}(\HS)} 
& \leq C\|H\|_{L_{q'}(\HS)}, \\
\|(\lambda, \lambda^{1/2}\bar\nabla, 
\bar\nabla^2)\CT_h^*(\lambda)H\|_{W^1_{q'}(\HS)} & \leq C\|H\|_{W^1_{q'}(\HS)}, \\
\|(\lambda^{1/2}\bar\nabla, \bar\nabla^2)
 \CT_h^*(\lambda)H\|_{L_{q'}(\HS)} &\leq C|\lambda|^{-1/2}\|H\|_{W^1_{q'}(\HS)}.
\end{aligned}\end{equation}
Since $H \in C^\infty_0(\HS)$, we see that 
$(\lambda, \lambda^{1/2}\bar\nabla, \bar\nabla^2)\CT_h(\lambda)H 
= (\CT_1(\lambda)(\lambda, \lambda^{1/2}\bar\nabla, \bar\nabla^2)H)$,
which implies that $((\lambda, \lambda^{1/2}\bar\nabla, \bar\nabla^2)\CT_1(\lambda))^{*}
=(\lambda, \lambda^{1/2}\bar\nabla, \bar\nabla^2)\CT_h(\lambda)^*$. 
In view of \eqref{5.3}, \eqref{5.4}, and \eqref{5.5},  the assertion \thetag1 of Theorem \ref{thm:5.2} 
implies that  \eqref{5.1.1} and \eqref{5.2.1} hold. \par
Let $X_r \in \{L_r(\HS), W^1_r(\HS)\}$ for $r = q, q'$.  
To prove \eqref{5.2.3}, from \eqref{5.3}, \eqref{5.4}, and \eqref{5.5}, we observe that there hold:
\begin{align*}
\|T_h(\lambda)H\|_{X_q} + |\lambda^{-1/2}\bar\nabla T_h(\lambda)H\|_{X_q}
&\leq C\|\lambda|^{-1}\|H\|_{X_q}, \\
\|T_h(\lambda)H\|_{W^1_q(\HS)} + \|\lambda^{-1/2}\bar\nabla T_h(\lambda)H\|_{W^1_q(\HS)}
&\leq C|\lambda|^{-1/2}\|H\|_{L_q(\HS)}, \\
\|T_h^*(\lambda)H\|_{X_{q'}} + \|\lambda^{-1/2}\bar\nabla T_h^*(\lambda)H\|_{X_{q'}}
&\leq C|\lambda|^{-1}\|H\|_{X_{q'}}, \\
\|T_h^*(\lambda)H\|_{W^1_{q'}(\HS)} + \|\lambda^{-1/2}\bar\nabla T_h^*(\lambda)H\|_{W^1_{q'}(\HS)}
&\leq C|\lambda|^{-1/2}\|H\|_{L_{q'}(\HS)}.
\end{align*}
Thus, by Theorem \ref{thm:5.2}, we have \eqref{5.2.3}. 
This completes the proof of Theorem \ref{thm:3.1}.
\end{proof}



\begin{proof}[\bf Proof of Theorem \ref{thm:hL.1}] 
Since $C^\infty_0(\HS)$ is dense in 
$B^\nu(\HS)$ for $-1+1/q < \nu < 1/q$, we may assume that 
$\bg = (g_1, \ldots, g_N)\in C^\infty_0(\HS)^N$. For any $f$ defined in $\HS$, let 
$f_e$ and $f_o$ be its even and odd extensions, which are define by 
$$f_e(x) = \begin{cases} f(x) &\quad(x_N > 0), \\ f(x', -x_N) &\quad(x_N < 0), \end{cases}
\quad
f_o(x) = \begin{cases} f(x) &\quad(x_N > 0), \\- f(x', -x_N) &\quad(x_N < 0). \end{cases}
$$
We consider the extension $\bg_e = (g_{1e}, \ldots, g_{N-1e}, g_{No})$ of $\bg$. 
Since $\bg \in C^\infty_0(\HS)^N$, so $\bg_e \in C^\infty_0(\BR^N)^N$.  Let $\CS(\lambda)$ be 
the solution operator of equations \eqref{L.1}, which is given in Theorem \ref{thm:L.1}. 
Let $\bu_1 = \CS(\lambda)\bg_e$, and then from \eqref{sol.3.1}, we see that $u_{1N}|_{x_N=0}=0$.
Here, $u_{1N}$ denotes the $N$-th component of $\bu_1$. \par
Let $\CT_h(\lambda)$ be the solution operator of the compensative  equations 
\eqref{july.1} given in Theorem \ref{thm:3.1}. Let $(\CS(\lambda)\bg_e)_i$ denote
the $i$-th component of $\CS(\lambda)\bg_e$ and set $(\CS(\lambda)\bg_e)' = ((\CS(\lambda)\bg_e)_1,
\ldots, (\CS(\lambda)\bg_e)_{N-1})$.  
Let $\bu_2 = \CT_\lambda(\lambda)\CD_\lambda(\CS(\lambda)\bg_e)'$, and then
$\bu=\bu_1-\bu_2$ is a solution of equations \eqref{hL.1}. 
\par
Let 
$$\CS_h(\lambda)\bg = \CS(\lambda)\bg_e - \CT_h(\lambda)\CD_\lambda(\CS(\lambda)\bg_e)'.$$
Notice that $\CS_h(\lambda)\bg = \bu$ is a solution of equations \eqref{hL.1}.
Our task is to prove that $\CS_h(\lambda)$ has the $(s, \sigma, q,r)$ properties.  
Since we know that  for $\CS(\lambda)$ has the $(s,\sigma,q,r)$ properties 
from Theorem \ref{thm:L.1}.   In what follows, we use Theorems \ref{thm:L.1}
to estimate $\CD_\lambda(\CS(\lambda)\bg_e)'$ and Theorem \ref{thm:3.1} to estimate
$\CT_h(\lambda)$.  Noting that $\|\bg_e\|_{B^\nu_{q,r}(\BR^N)} 
\leq C\|\bg\|_{B^\nu_{q,r}(\HS)}$, we observe that
\begin{align*}
&\|(\lambda, \lambda^{1/2}\bar\nabla, \bar\nabla^2)\CT_h(\lambda)\CD_\lambda(\CS(\lambda)\bg_e)'
\|_{B^\nu_{q,r}(\HS)} 
\leq C\|\CD_\lambda(\CS(\lambda)\bg_e)'\|_{B^\nu_{q,r}(\HS)} \\
&\quad \leq C\|(\lambda, \lambda^{1/2}\bar\nabla, \bar\nabla^2)\CS(\lambda)\bg_e\|_{B^\nu_{q,r}(\BR^N)}
\leq C\|\bg\|_{B^\nu_{q,r}(\HS)}, \\
&\|(\lambda, \lambda^{1/2}\bar\nabla, \bar\nabla^2)\CT_h(\lambda)\CD_\lambda(\CS(\lambda)\bg_e)'
\|_{B^\nu_{q,r}(\HS)} 
\leq C|\lambda|^{-\frac{\sigma}{2}}\|\CD_\lambda(\CS(\lambda)\bg_e)'\|_{B^{s+\sigma}_{q,r}(\HS)} \\
&\quad \leq C|\lambda|^{-\frac{\sigma}{2}}\|(\lambda, \lambda^{1/2}\bar\nabla, \bar\nabla^2)\CS(\lambda)
\bg_e\|_{B^{s+\sigma}_{q,r}(\HS)}
\leq C|\lambda|^{-\frac{\sigma}{2}}\|\bg\|_{B^{s+\sigma}_{q,r}(\HS)}, \\
&\|(1, \lambda^{-1/2}\bar\nabla)\CT_h(\lambda)\CD_\lambda(\CS(\lambda)\bg_e)'
\|_{B^\nu_{q,r}(\HS)} \leq C|\lambda|^{-(1-\frac{\sigma}{2})}
\|\CD_\lambda(\CS(\lambda)\bg_e)'\|_{B^{s-\sigma}_{q,r}(\HS)} \\
&\quad \leq C|\lambda|^{-(1-\frac{\sigma}{2})}\|(\lambda, \lambda^{1/2}\bar\nabla, \bar\nabla^2)\CS(\lambda)
\bg_e\|_{B^{s-\sigma}_{q,r}(\BR^N)}
\leq C|\lambda|^{-(1-\frac{\sigma}{2})}\|\bg\|_{B^{s-\sigma}_{q,r}(\HS)}, 
\end{align*}
Therefore, we see that $\CS_h(\lambda)$ has the estimates stated in \eqref{L1prop}. 
Moreover, using the relation: $\pd_\lambda \CS_h(\lambda) = -\CS_h(\lambda)\CS_h(\lambda)$,
we see that $\pd_\lambda \CS_h(\lambda)$ has the estimates stated in \eqref{L1prop}. 
Namely, we see that $\CS_h(\lambda)$ has $(s, \sigma, q,r)$ properties. 
This completes the proof of Theorem \ref{thm:hL.1}.
\end{proof}

\subsection{The bent half space case.} \label{sec.4.3}

Let $x_0 \in \pd\Omega$. As was seen in  \cite[Appendix]{ES13}
or in \cite[Subsec. 3.2.1]{S20}, 
there exist a constant $d>0$, a 
diffeomorphism of  $C^3$ class $\Phi: \BR^N \to \BR^N$, $x \mapsto y=\Phi(x)$ 
and its inverse map $\Phi^{-1}: \BR^N \to \BR^N$, $y \mapsto x=\Phi^{-1}(x)$  such that 
$\Phi(0)=x_0$, 
 $B_d(x_0) \cap \Omega \subset \Phi(\HS)$, and $B_d(x_0) \cap \pd\Omega
\subset \Phi(\BR^N_0)$  and 
\begin{equation*}\label{coef.10}
\nabla \Phi = \CA + \CB(x), \quad \nabla \Phi^{-1}(y) = \CA_- + \CB_-(y)
\end{equation*}
where  $\CA$ and $\CA_-$ are $N\times N$ orthogonal matrices of constant coefficients
such that $\CA\CA_- = \CA_-\CA = I$ and $\CB(x)$ and $\CB_-(y)$ are $N\times N$
matrices of $C^2$ functions. 
Here and in the following, we write $B_d(x_0) = \{y \in \BR^N \mid |y-x_0| < d\}$. \par
From the construction of diffeomorphisms $\Phi$ and $\Phi^{-1}$
(cf. \cite[Appendix]{ES13}
or in \cite[Subsec. 3.2.1]{S20}), 
we may assume that for any  constant  $M_1 > 0$
we can choose  $0 < d < 1$ small enough in such a way that 
\begin{equation}\label{coef.11}
\|(\CB, \CB_-)\|_{L_\infty(\BR^N)}  \leq M_1. 
\end{equation}
Furthermore, we may assume that there exist constants 
$D$ and $M_2$ such that  
\begin{equation}\label{coef.12}\begin{aligned}
\|\nabla(\CB, \CB_-)\|_{L_\infty(\BR^N)}&\leq D\\
\|\nabla^2(\CB, \CB_-)\|_{L_\infty(\BR^N)}& \leq M_2.
\end{aligned}\end{equation}
Here, $D$ is independent of choice of 
$M_1$ and $d$, but $M_2$ depends on $M_1^{-1}$ and $d$.  
We may assume that $M_1 < 1 \leq D \leq M_2$.
\par
Let 
\begin{equation}\label{domain.1}
\Omega_+ = \Phi(\HS),  \quad \Gamma_+ = \Phi(\pd\HS).
\end{equation}
$\Omega_+$ is called a bent space. 
In this section, we consider 
 Lam\'e equations in $\Omega_+$, which reads as 
\begin{equation}\label{resol.1}
\lambda \bv - \alpha\Delta\bv - \beta\nabla\dv \bv  = \bg\quad
\text{in $\Omega_+$}, \quad 
\bv|_{\Gamma_+} = 0.
\end{equation}
We shall show the following theorem.
\begin{thm}\label{thm:bent1} Let $x_0 \in \pd\Omega$. 
Let $\Phi$ and $\Phi^{-1}$ be a $C^3$ diffeomorphism on $\BR^N$ and 
 its inverse, respectively.    Let $\Omega_+$ and $\Gamma_+$ be the bent space
and its boundary defined in \eqref{domain.1}.
Let $1 < q < \infty$, $1 \leq r \leq \infty-$, and $-1+1/q < s < 1/q$. Let $\sigma$ be a small 
positive number such that $-1+1/q < s-\sigma < s+\sigma < 1/q$.  
Let $\nu \in \{s-\sigma, s, s+\sigma\}$.
Then,
there exist a small constant $d>0$, a large constant $\lambda_1 > 0$ and an operator
$\CS_p(\lambda) \in {\rm Hol}\, (\Sigma_{\epsilon, \lambda_1},
\CL(B^\nu_{q,r}(\Omega_+)^N, B^{\nu+2}_{q,r}(\Omega_+)^N))$ having 
$(s, \sigma, q, r)$ properties in $\Omega_+$ such that 
for any $\lambda \in \Sigma_{\epsilon, \lambda_1}$ and $\bg \in B^\nu_{q,r}(\Omega_+)$,
 $\bv=\CS_p(\lambda)\bg$ is a unique solution of equations \eqref{resol.1}.
\end{thm}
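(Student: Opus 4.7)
The plan is to reduce equations \eqref{resol.1} on the bent half space $\Omega_+$ to a perturbed Lam\'e system on the flat half space $\HS$ via the change of variables $y = \Phi^{-1}(x)$, and then invert using the half-space solution operator $\CS_h(\lambda)$ from Theorem \ref{thm:hL.1} via a Neumann series. Set $\tilde\bv(y) = \bv(\Phi(y))$, $\tilde\bg(y) = \bg(\Phi(y))$. The chain rule together with $\nabla\Phi^{-1} = \CA_- + \CB_-$ transforms $\pd_{x_k}$ into $\sum_j ((\CA_-)_{jk} + (\CB_-)_{jk})\pd_{y_j}$, so that the principal part becomes a constant-coefficient second order operator on $\HS$ plus variable-coefficient terms. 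Because $\CA_-$ is constant orthogonal, the constant coefficient part, after rotating the unknown vector by $\CA$, is exactly $-\alpha\Delta - \beta\nabla\dv$. Thus the pulled-back problem can be written as
\begin{equation*}
(\lambda - \alpha\Delta - \beta\nabla\dv)\bw = \tilde\bg + \CR_2(\lambda)\bw + \CR_1(\lambda)\bw \quad\text{in $\HS$}, \quad \bw|_{\pd\HS} = 0,
\end{equation*}
where $\CR_2$ collects the second-order remainders, whose coefficients are products of entries of $\CB_-$ (and hence bounded in $L_\infty$ by $CM_1$ thanks to \eqref{coef.11}), and $\CR_1$ collects first- and zeroth-order terms whose coefficients involve $\nabla\CB_-$ (bounded by $CD$ in $L_\infty$ via \eqref{coef.12}, but without smallness).

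The second step is to rewrite the system as the fixed point equation
$$\bigl(I - \CS_h(\lambda)(\CR_2(\lambda) + \CR_1(\lambda))\bigr)\bw = \CS_h(\lambda)\tilde\bg.$$
I would show that the operator norm of $\CS_h(\lambda)(\CR_2 + \CR_1)$ on $B^{s+2}_{q,r}(\HS)^N$ can be made smaller than $1/2$ by choosing $d$ small (hence $M_1$ small) and $\lambda_1$ large. Concretely, since $\CR_2\bw$ involves $\bar\nabla^2\bw$ multiplied by coefficients with $L_\infty$-norm $\leq CM_1$, the first estimate in \eqref{L1prop} for $\CS_h(\lambda)$ yields a contribution bounded by $CM_1$ times $\|\bw\|_{B^{s+2}_{q,r}}$. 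For $\CR_1\bw$, each term carries at most one derivative of $\bw$, and the estimate $\|\lambda^{1/2}\bar\nabla\CS_h(\lambda)\cdot\|_{B^s_{q,r}} \leq C\|\cdot\|_{B^s_{q,r}}$ from \eqref{L1prop} gives a contribution $\leq CD|\lambda|^{-1/2}\|\bw\|$, which becomes small for $|\lambda| \geq \lambda_1$ with $\lambda_1$ large. Taking $d$ and $\lambda_1^{-1/2}$ small enough yields the Neumann series $\CS_p(\lambda)\bg = \sum_{k=0}^\infty (\CS_h(\lambda)(\CR_1+\CR_2))^k \CS_h(\lambda)\tilde\bg$, after composing with the pullback by $\Phi$.

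The third step is to transfer the $(s,\sigma,q,r)$ properties from $\CS_h(\lambda)$ to $\CS_p(\lambda)$. This amounts to showing that the same smallness estimates on $\CS_h(\lambda)(\CR_1+\CR_2)$ hold also for the auxiliary norms $B^{s\pm\sigma}_{q,r}$, and that multiplication by the $C^2$ coefficients $\CB_-, \nabla\CB_-$ is bounded on $B^\nu_{q,r}(\HS)$ for $\nu \in \{s-\sigma, s, s+\sigma\}$ with $-1+1/q < \nu < 1/q$. Each of the four estimates in \eqref{L1prop} for $\CS_p$ then follows by summing the Neumann series term-by-term. The $\pd_\lambda\CS_p(\lambda)$ estimate is obtained by writing $\pd_\lambda\CS_p = (I - \CS_h(\CR_1+\CR_2))^{-1}\pd_\lambda(\CS_h\tilde\bg) + (I - \CS_h(\CR_1+\CR_2))^{-1}\pd_\lambda(\CS_h(\CR_1+\CR_2))\CS_p\tilde\bg$ and using the $\pd_\lambda$ estimates already built into the $(s,\sigma,q,1)$ property of $\CS_h$, together with the observation $\pd_\lambda \CR_1(\lambda)$ gains an extra $|\lambda|^{-1}$ (since the $\lambda$-dependence of $\CR_1$ enters only via multiplication by powers of $\lambda^{1/2}$ absorbed in the definition of $\CD_\lambda$-type quantities).

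The main obstacle will be step three: verifying that the coefficient multiplications $u \mapsto \CB_- u$ and $u\mapsto (\nabla\CB_-)u$ act boundedly on the Besov spaces $B^\nu_{q,r}(\HS)$ for negative $\nu$ ($-1+1/q < \nu < 0$). Since $\Phi\in C^3$ gives $\CB_- \in C^2 \subset L_\infty$ and $\nabla\CB_- \in C^1 \subset L_\infty$, multiplication is trivially bounded on $L_q$; but for $\nu < 0$ one needs a para-product argument, and the operator norm must be controllable by $M_1$ (for $\CB_-$) and by $D$ uniformly in $d$ (for $\nabla\CB_-$), \emph{not} by $M_2$, because $M_2$ depends on $d^{-1}$ and would destroy the smallness. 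This requires a careful splitting of the multiplier into its $L_\infty$ part and a remainder whose higher-regularity norm is absorbed by the low-high paraproduct. Once this is in place, the Neumann series argument closes, and uniqueness follows because any solution of the homogeneous problem must correspond to a $\bw$ in the kernel of $I - \CS_h(\CR_1+\CR_2)$, which is trivial.
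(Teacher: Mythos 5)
Your overall route is the same as the paper's: flatten by $\Phi$, rotate the unknown by the orthogonal matrix $\CA$ so that the principal part becomes the constant-coefficient Lam\'e operator on $\HS$, treat the variable-coefficient remainders $\CR_1,\CR_2$ as perturbations, close by a Neumann series built on Theorem \ref{thm:hL.1}, and pull back to $\Omega_+$. Two structural remarks: the paper inverts on the data side, i.e.\ it sets $\CR_h(\lambda)\tilde\bg=(\CR_1+\CR_2)\CS_h(\lambda)\tilde\bg$ and takes $\bu=\CS_h(\lambda)(\bI+\CR_h(\lambda))^{-1}\tilde\bg$, so that all four estimates in \eqref{L1prop} transfer by a single application of the $(s,\sigma,q,r)$ properties of $\CS_h$ once $(\bI+\CR_h(\lambda))^{-1}$ is bounded on each $B^\nu_{q,r}(\HS)$, $\nu\in\{s-\sigma,s,s+\sigma\}$; your inversion on the solution space is the same operator after regrouping, but the mixed $B^{s\pm\sigma}$ estimates then require exactly this regrouping anyway. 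Moreover, since $\CR_1,\CR_2$ do not depend on $\lambda$, uniqueness gives $\pd_\lambda\CS_p(\lambda)=-\CS_p(\lambda)\CS_p(\lambda)$, so the $\pd_\lambda$ estimates are immediate; differentiating the Neumann series (and speaking of "$\pd_\lambda\CR_1(\lambda)$") is unnecessary.

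The one point where your plan as written would stall is the requirement you impose in step three: that multiplication by $\nabla\CB_-$ on $B^\nu_{q,r}(\HS)$ with $\nu<0$ be bounded with a constant controlled by $D$ uniformly in $d$, and "not by $M_2$". Such a $d$-uniform bound is neither available (for $\nu<0$ any multiplier estimate needs some positive regularity of the coefficient, and here that regularity is only quantified through $M_2$) nor needed. The paper's Lemma \ref{lem:dif1} gives, by interpolation and duality, $\|fg\|_{B^\nu_{q,r}}\le C\|f\|_{B^\nu_{q,r}}\|g\|_{L_\infty}^{1-|\nu|}\|g\|_{W^1_\infty}^{|\nu|}$, whence with $\kappa=\max(1/q,1/q')$ the bound $\|\CR_h(\lambda)\tilde\bg\|_{B^\nu_{q,r}(\HS)}\le C\bigl(M_1^{1-\kappa}D^{\kappa}+|\lambda|^{-1/2}D^{1-\kappa}M_2^{\kappa}\bigr)\|\tilde\bg\|_{B^\nu_{q,r}(\HS)}$. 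The $M_2$-dependence does not destroy smallness because the constants are chosen in order: first $d$ (hence $M_1$) small so that $CM_1^{1-\kappa}D^{\kappa}<1/4$ (possible since $D$ is independent of $d$), and only then $\lambda_1$ large, depending on $M_2$ and hence on $d$, so that $C\lambda_1^{-1/2}D^{1-\kappa}M_2^{\kappa}<1/4$ — which is consistent with the statement, where $\lambda_1$ may depend on the domain data. Note also that the same interpolation inequality is what you need for the second-order coefficients: the honest constant there is $M_1^{1-\kappa}D^{\kappa}$, not $CM_1$ as claimed in your step two, but it still tends to $0$ with $d$. Finally, your proposal omits the pull-back step: one needs the analogue of Lemma \ref{comp.1}, i.e.\ boundedness of composition with $\Phi^{\pm1}$ on $B^\nu_{q,r}$ including negative $\nu$ (proved by duality, with the Jacobian handled again by the multiplier lemma); it is of the same nature as the estimates you flag, but it must be stated and used to pass from $\bu$ on $\HS$ to $\bv=\bu\circ\Phi^{-1}$ on $\Omega_+$.
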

\begin{proof}
First, we shall reduce problem \eqref{resol.1}
to that in the half-space $\HS$.
Let $a_{kj}$ and $b_{kj}(x)$ be the $(k, j)$th 
components of $\CA_-$ and $\CB_-(\Phi(x))$, 
and then we have
\begin{equation}\label{change.1}
\frac{\pd}{\pd y_j} = \sum_{k=1}^N(a_{kj} + b_{kj}(x))\frac{\pd}{\pd x_k}\quad(j=1, \ldots, N).
\end{equation}
Notice that 
\begin{equation}\label{change.2}
\sum_{j=1}^N a_{jk}a_{j\ell} =\sum_{j=1}^N  a_{kj}a_{\ell j} = \delta_{k\ell}.
\end{equation}
Let $\tilde \bv(x) = \bv(y)$.  We write $\tilde \bv(x) = (\tilde v_1(x), \ldots, \tilde v_N(x))^\top$ and 
$\bv(y)=(v_1(y), \ldots, v_N(y))^\top$, where $A^\top$ denotes the transposed $A$ for any
vector or matrix $A$.  
By \eqref{change.1} we have
\begin{equation}\label{dv.1}
\dv_y \bv(y) = \sum_{\ell=1}^N \frac{\pd v_\ell}{\pd y_\ell} =
\sum_{\ell, m=1}^N (a_{m\ell} +b_{m\ell}(x))\frac{\pd \tilde v_\ell}{\pd x_m}.
\end{equation}
Moreover, we set $\tilde v_\ell = \sum_{k=1}^N a_{k\ell}w_k$, 
and $\bw = (w_1, \ldots, w_N)^\top$. 
From  \eqref{change.2} and \eqref{dv.1} it follows that 
\begin{equation*}\label{dv.3}
\dv_y \bv_y = 
\sum_{\ell, m, k=1}^N (a_{m\ell} +b_{m\ell}(x))a_{k\ell}\frac{\pd w_k}{\pd x_m}
= \dv \bw + \sum_{ m, k=1}^N(\sum_{\ell=1}^N a_{k\ell} b_{m\ell}(x))\frac{\pd w_k}{\pd x_m}.
\end{equation*}
\par
For  equations \eqref{resol.1}, we observe that
\begin{align*}
&\Delta v_i  = \sum_{j=1}^N\frac{\pd^2 v_i}{\pd y_j^2} \\
& =\sum_{j, k,\ell=1}^N (a_{kj} + b_{kj}(x))\frac{\pd}{\pd x_k}((a_{\ell j} 
+ b_{\ell j}(x))\frac{\pd \tilde v_i}{\pd x_\ell})
\\
& = \sum_{j,k,\ell=1}^Na_{kj}a_{\ell j}\frac{\pd^2 \tilde v_i}{\pd x_k\pd x_\ell}
+ \sum_{j,k, \ell=1}^Nb_{kj}(x)(a_{\ell j} + b_{\ell j}(x))\frac{\pd^2\tilde v_i}{\pd x_k\pd x_\ell}
+ \sum_{j, k, \ell=1}^N(a_{kj}+b_{kj}(x))\frac{\pd b_{\ell j}}{\pd x_k}\frac{\pd \tilde v_i}{\pd x_\ell}\\
& = \Delta \tilde v_i + \sum_{j,k,\ell=1}^N(a_{kj}b_{\ell j}(x)
+ a_{\ell j}b_{kj}(x) + b_{kj}(x)b_{\ell j}(x))\frac{\pd^2 \tilde v_i} {\pd x_k\pd x_\ell}
+ \sum_{j, k, \ell=1}^N(a_{kj} + b_{lj}(x))\frac{\pd b_{\ell j}}{\pd x_k}
\frac{\pd \tilde v_i}{\pd x_\ell}\\
& = \sum_{n=1}^N a_{ni}(\Delta w_n + \sum_{j,k,\ell=1}^N(a_{kj}b_{\ell j}(x)
+ a_{\ell j}b_{kj}(x) + b_{kj}(x)b_{\ell j}(x))\frac{\pd^2 w_n} {\pd x_k\pd x_\ell}\\
&\hskip7cm+ \sum_{j, k, \ell=1}^N(a_{kj} + b_{lj}(x))\frac{\pd b_{\ell j}}{\pd x_k}
\frac{\pd w_n}{\pd x_\ell}; \\
&\frac{\pd}{\pd y_i}\dv\bv 
= \sum_{j=1}^N(a_{ji} + b_{ji}(x))\frac{\pd}{\pd x_j}(\dv\bw 
+ \sum_{k, n=1}^N(\sum_{\ell=1}^N a_{k\ell}b_{n\ell}(x))\frac{\pd w_k}{\pd x_n}).
\end{align*}
Thus, we have
\allowdisplaybreaks
\begin{align*}
g_i & = \lambda \sum_{n=1}^N a_{ni}w_n \\
& - \alpha \sum_{n=1}^N a_{ni}\bigl\{ \Delta w_n + \sum_{j,k,\ell=1}^N(a_{kj}b_{\ell j}(x)
+ a_{\ell j}b_{kj}(x) + b_{kj}(x)b_{\ell j}(x))\frac{\pd^2 w_n} {\pd x_k\pd x_\ell} \\
&\hskip3cm+ \sum_{j, k, \ell=1}^N(a_{kj} + b_{lj}(x))\frac{\pd b_{\ell j}}{\pd x_k}
\frac{\pd w_n}{\pd x_\ell}\bigr\} \\
&-\beta \sum_{j=1}^N(a_{ji} + b_{ji}(x))\frac{\pd}{\pd x_j}\bigl\{\dv\bw 
+ \sum_{k, n=1}^N(\sum_{\ell=1}^N a_{k\ell}b_{n\ell}(x))\frac{\pd w_k}{\pd x_n}\bigr\}.
\end{align*}
Noticing $\sum_{i=1}^N a_{ni}a_{mi} = \delta_{nm}$ and $\sum_{i=1}^Na_{ji}a_{mi} = \delta_{jm}$, 
where $\delta_{ij}$ denote the Koronecker delta symbols such that
$\delta_{ii}=1$ and $\delta_{ij}=0$ for $i\not=j$, we have 
\allowdisplaybreaks
\begin{align*}
&\sum_{i=1}^Na_{mi}g_i(\Phi(x)) = \lambda w_m \\
&-\alpha\bigl\{ \Delta w_m + \sum_{j,k,\ell=1}^N(a_{kj}b_{\ell j}(x)
+ a_{\ell j}b_{kj}(x) + b_{kj}(x)b_{\ell j}(x))\frac{\pd^2 w_m} {\pd x_k\pd x_\ell} \\
&\quad + \sum_{j, k, \ell=1}^N(a_{kj} + b_{lj}(x))\frac{\pd b_{\ell j}}{\pd x_k}
\frac{\pd w_m}{\pd x_\ell}\bigr\} \\
& -\beta \frac{\pd}{\pd x_m}(\dv \bw + \sum_{k,n=1}^N
(\sum_{\ell=1}^Na_{k\ell}b_{n\ell}(x))\frac{\pd w_k}{\pd x_n}) \\
&-\beta \sum_{j=1}^N(\sum_{i=1}^Na_{mi}b_{ji}(x))\frac{\pd}{\pd x_j}
(\dv \bw + \sum_{k,n=1}^N
(\sum_{\ell=1}^Na_{k\ell}b_{n\ell}(x))\frac{\pd w_k}{\pd x_n}).
\end{align*}
Let 
\allowdisplaybreaks
\begin{align*}
\tilde g_m &= \sum_{i=1}^N a_{mi}g_i(\Phi(x)), \quad
\tilde\bg(x) = (\tilde g_1(x), \ldots, \tilde g_m(x))^\top, \\
\CR_{2j}\bw & =\alpha  \sum_{j,k,\ell=1}^N(a_{kj}b_{\ell j}(x)
+ a_{\ell j}b_{kj}(x) + b_{kj}(x)b_{\ell j}(x))\frac{\pd^2 w_m} {\pd x_k\pd x_\ell} \\
& + \beta\sum_{k,n=1}^N
(\sum_{\ell=1}^Na_{k\ell}b_{n\ell}(x))\frac{\pd^2 w_k}{\pd x_m\pd x_n}) \\
&-\beta \sum_{j=1}^N\bigl\{\sum_{i=1}^Na_{mi}b_{ji}(x))
(\frac{\pd}{\pd x_j}\dv \bw + \sum_{k,n=1}^N
(\sum_{\ell=1}^Na_{k\ell}b_{n\ell}(x))\frac{\pd^2 w_k}{\pd x_j\pd x_n}\bigr\}, \\
\CR_{1j}\bw& = \alpha \sum_{j, k, \ell=1}^N(a_{kj} + b_{lj}(x))\frac{\pd b_{\ell j}}{\pd x_k}
\frac{\pd w_n}{\pd x_\ell} + \beta \sum_{k,n=1}^N 
(\sum_{\ell=1}^N a_{k\ell}\frac{\pd b_{n\ell}}{\pd x_m})\frac{\pd w_k}{\pd x_n}) \\ 
&+ \beta(\sum_{i, j=1}^Na_{mi}b_{ji}(x))\sum_{k,n=1}^N
(\sum_{\ell=1}^Na_{k\ell}\frac{\pd b_{n\ell}}{\pd x_j})\frac{\pd w_k}{\pd x_n})\\
\CR_2\bw &= (\CR_{21}\bw, \ldots, \CR_{2N}\bw), \quad 
\CR_1\bw = (\CR_{11}\bw, \ldots, \CR_{1N}\bw).
\end{align*}
Then, we have
\begin{equation}\label{benteq.1}
\lambda \bw - \alpha \Delta\bw - \beta \nabla\dv\bw
+ \CR_2\bw + \CR_1\bw  = \tilde \bg \quad\text{in $\HS$}, 
\quad \bw|_{x_N=0}=0.
\end{equation}
This is  reduced Lam\'e equations in the half-space. \par
We now solve equations \eqref{benteq.1} by using Theorem \ref{thm:hL.1}.  Let 
$\CS_h(\lambda)$ be the solution operator of equations \eqref{hL.1}
given in Theorem \ref{thm:hL.1}.  Set $\bw = \CS_h(\lambda)\tilde \bg$ and 
insert it into equations \eqref{hL.1}.  Then, setting 
$$\CR_h(\lambda)\tilde\bg = \CR_2\bw + \CR_1\bw = \CR_2\CS_h(\lambda)\tilde\bg
+ \CR_1\CS_h(\lambda)\tilde\bg,
$$
we have
\begin{equation*}\label{p.eq1}
\lambda \bw - \alpha \Delta \bw -\beta \nabla\dv \bw
+ \CR_2\bw + \CR_1\bw = (\bI +\CR_h(\lambda))\tilde\bg
\quad\text{in $\HS$}, \quad \bw|_{x_N=0} = 0. 
\end{equation*}
To estimate $\CR_h(\lambda)\tilde\bg$, we use the following lemma.
\begin{lem}\label{lem:APH} Let $1 < q < \infty$, $1 \leq r \leq \infty$, and $-1+1/q < s < 1/q$.
Let $p_2$ be an exponent such that $N < p_2 < \min(q, q')N$.  Then, we have
\begin{equation}\label{p.prod1}
\|uv\|_{B^s_{q,r}(\HS)} \leq C\|u\|_{B^s_{q,r}(\HS)}\|v\|_{B^{N/p_2}_{p_2,r}(\HS) \cap L_\infty(\HS)}.
\end{equation}
\end{lem}
\begin{proof}
By using an extension map from $\HS$ into $\BR^N$, it is sufficient to prove the lemma
in the case where the domain is $\BR^N$ instead of $\HS$.  Below, we omit $\BR^N$. 
We shall use the Abidi-Paicu theory \cite[Cor.2.5]{AP07} or
 the Haspot theory \cite[Prop. 2.3]{H11}.
According to the Abidi-Paicu-Haspot theory, we have
$$\|uv\|_{B^{s_1+s_2-N(\frac{1}{p_1}+\frac{1}{p_2}-\frac{1}{q})}_{q,r} }
\leq C\|u\|_{B^{s_1}_{q,r}}
\|v\|_{B^{s_2}_{p_2, r}\cap L_\infty}$$
provided that $1/q \leq 1/p_1+1/\lambda_1 \leq 1$, $1/q \leq 1/p_2 + 1/\lambda_2 \leq 1$, 
$1/q \leq 1/p_1 + 1/p_2$, $p_1 \leq \lambda_2$, $p_2 \leq \lambda_1$, 
$s_1+s_2+N\inf(0, 1-1/p_1-1/p_2)>0$, $s_1 + N/\lambda_2 < N/p_1$ and $s_2 + N/\lambda_1 \leq N/p_2$. 
We choose $p_1=q$, $s_1=s$ and $s_2 = N(1/p_1+1/p_2-1/q)= N/p_2$. 
In particular, $s_1+s_2-N(\frac{1}{p_1}+\frac{1}{p_2}-\frac{1}{q})=s$. Let $\lambda_1=\infty$, and then
$1/q \leq 1/q + 0 \leq 1$, $p_2 \leq \lambda_1$.  We choose $\lambda_2$ in such a way that
$1/\lambda_2 = 1/q -1/p_2$ when $1/q \geq 1/p_2$ and $\lambda_2=\infty$ when
$1/q < 1/p_2$. When $1/q < 1/p_2$, we have $s_1 + N/\lambda_2 = s < 1/q < N/q$ 
When $1/q \geq 1/p_2$, we have 
$s_1 + N/\lambda_2 = s + N(1/q-1/p_2) < N/q$ , namely we choose $p_2$ such that  
$s - N/p_2 <0$. Since $s < 1/q$, we choose $p_2$ such that $1/q \leq N/p_2$, 
that is $p_2 \leq qN$.  Thus, so far we choose $p_2$ in such a way that $N < p_2 < qN$.
Since $\lambda_1=\infty$, the condition $p_2 \leq \lambda_1$ is satisfied. When $1/q \geq 1/p_2$, 
$\lambda_2^{-1} = 1/q -1/p_2 < 1/q$, and so $q < \lambda_2$.
When $1/q < 1/p_2$, $\lambda_2=\infty$, and so $q \leq \lambda_2$.
When $1-1/q-1/p_2 \geq 0$, that is $p_2 \geq q'$, $s_1+s_2+N\inf(0, 1/p_1-1/p_2) = s+N/p_2 > 0$.
Since $s > -1+1/q = -1/q'$, we have $-N/p_2 < -1/q'$ provided that $p_2 \leq Nq'$. 
When $1-1/q-1/p_2 < 0$, that is  $p_2 < q'$, 
$s_1+s_2+N\inf(0, 1-1/p_1-1/p_2) = s+N/p_2+N/q'-N/p_2 = s+N/q' > 0$
because $s > -1/q'$.  Summing up, if $N < p_2 < \min(q,q')N$, then the Abidi-Paicu-Haspot 
conditions are all satisfied.  Thus, we have \eqref{p.prod1}.  This completes the proof of 
Lemma \ref{lem:APH}.
\end{proof}
\begin{lem}\label{lem:dif1} Let $1 < q < \infty$, $1 \leq r \leq \infty-$ and $-1+1/q < s < 1/q$.
Then, for $f \in B^s_{q,r}(\HS)$ and $g \in W^1_\infty(\HS)$, there holds 
\begin{equation}\label{dif.1}
\|fg\|_{B^s_{q,r}(\HS)} \leq C_s\|f\|_{B^s_{q,r}(\HS)}\|g\|_{L_\infty(\HS)}^{1-|s|}
\|g\|_{W^1_\infty(\HS)}^{|s|}.
\end{equation}
provided that $s\not=0$ and 
\begin{equation}\label{dif.2}
\|fg\|_{B^0_{q,r}(\HS)} \leq C_\epsilon\|f\|_{B^0_{q,r}(\HS)}\|g\|_{L_\infty(\HS)}^{1-\epsilon}
\|g\|_{W^1_\infty(\HS)}^{\epsilon}.
\end{equation}
with any small $\epsilon > 0$. Here, 
 $C_s$ and $C_\epsilon$ denote constants being independent of $f$ and $g$.
\end{lem}
\begin{proof} First, we consider the case where $0 < s < 1/q$.  Since $C^\infty_0(\HS)$ is 
dense in $B^s_{q,r}(\HS)$, we may assume that $f \in C^\infty_0(\HS)$.  We know that 
\begin{equation}\label{real.1}
(L_q(\HS), W^1_q(\HS))_{s, r} = B^s_{q,r}(\HS).
\end{equation}
Here, $(\cdot, \cdot)_{s, r}$ denotes the real interpolation functor. 
We see easily that 
$$\|fg\|_{L_q(\HS)} \leq \|f\|_{L_q(\HS)} \|g\|_{L_\infty(\HS)}, \quad
\|fg\|_{W^1_q(\HS)} \leq \|f\|_{W^1_q(\HS)} \|g\|_{W^1_\infty(\HS)}.$$	
Since $(\cdot, \cdot)_{s,r}$ is an exact interpolation functor of exponent $s$ (cf. \cite[p.41,
in the proof of Theorem 3.1.2]{BL}), we have
$$\|fg\|_{B^s_{q,r}(\HS)} \leq C\|g\|_{L_\infty(\HS)}^{1-s}
\|g\|_{W^1_\infty(\HS)}^s\|f\|_{B^s_{q,r}(\HS)}.
$$
This shows \eqref{dif.1} for $0 < s < 1/q$. \par
Next we consider the case where $-1+1/q < s < 0$.  For any $\varphi \in C^\infty_0(\HS)$, 
we have
\begin{align*}
|(fg, \varphi)_{\HS} &= |(f, g\varphi)_{\HS}| \leq \|f\|_{B^s_{q,r}(\HS)}
\|g\varphi\|_{B^{-s}_{q',r'}(\HS)} \\
& \leq C\|g\|_{L_\infty(\HS)}^{1-|s|}
\|g\|_{W^1_\infty(\HS)}^{|s|} \|f\|_{B^s_{q,r}(\HS)}\|\varphi\|_{B^{-s}_{q', r'}(\HS)}.
\end{align*}
Since $C^\infty_0(\HS)$ is dense in $B^{-s}_{q', r'}(\HS)$, we have
$$\|fg\|_{B^s_{q,r}(\HS)} \leq C\|g\|_{L_\infty(\HS)}^{1-|s|}
\|g\|_{W^1_\infty(\HS)}^{|s|} \|f\|_{B^s_{q,r}(\HS)}.$$
Since $B^0_{q,r}(\HS) = (B^\epsilon_{q,r}(\HS), B^{-\epsilon}_{q,r}(\HS))_{1/2, r}$ 
for any $\epsilon > 0$,  we have \eqref{dif.2}.
This completes the proof of Lemma \ref{lem:dif1}.
\end{proof}
{\bf Continuation of the proof of Theorem \ref{thm:bent1}.}  
Let $\lambda \in \Sigma_{\epsilon, \lambda_0}$. For $\nu \in \{s-\sigma, s, s+\sigma\}$, 
using Theorem \ref{thm:hL.1}, 
we have
\begin{equation*}\label{pert.1}
\|(\lambda, \lambda^{1/2}\bar\nabla, \bar\nabla^2)\bw\|_{B^\nu_{q,r}(\HS)}
= \|(\lambda, \lambda^{1/2}\bar\nabla, \bar\nabla^2)\CS_h(\lambda)\tilde\bg\|_{B^\nu_{q,r}(\HS)}
\leq C_{\lambda_0}\|\tilde\bg\|_{B^\nu_{q,r}(\HS)}.
\end{equation*}
Since we assume that $-1+1/q < s < 1/q$, we see that $|s| \leq \max(1/q,  1/q')$.
Let $\kappa = \max(1/q,  1/q') < 1$. Using Lemma \ref{lem:dif1} and \eqref{coef.11}
and \eqref{coef.12} and recalling that $\bw = \CS_h(\lambda)\tilde\bg$,  we have
\begin{align*}
\|\CR_h(\lambda)\tilde\bg\|_{B^\nu_{q,r}(\HS)} 
&\leq C(M_1^{1-\kappa}D^\kappa\|\nabla^2 \bw\|_{B^\nu_{q,r}(\HS)}
+ D^{1-\kappa}M_2^\kappa \|\nabla\bw\|_{B^\nu_{q,r}(\HS)}) \\
&\leq C(M_1^{1-\kappa}D^\kappa + |\lambda|^{-1/2}D^{1-\kappa}M_2^\kappa )
\|\tilde\bg\|_{B^\nu_{q,r}(\HS)}.
\end{align*}
Recall that $D$ is independent of $M_1$ and $M_2$. Choosing $M_1>0$ so small and 
$\lambda_1>0$ so large in such a way that 
\begin{gather}
CM_1^{1-\kappa}D^\kappa < 1/4, \label{small.a.1} \\
C\lambda_1^{-1/2}D^{1-\kappa}M_2^\kappa <1/4 \label{small.a.2}
\end{gather}
 we have
$$\|\CR_h(\lambda)\tilde\bg\|_{B^\nu_{q,r}(\HS)} \leq (1/2)\|\tilde\bg\|_{B^\nu_{q,r}(\HS)}
$$
for any $\lambda \in \Sigma_{\epsilon, \lambda_1}$.  Thus,  the inverse operator
$(\bI+\CR_h(\lambda))^{-1}$ exists in $\CL(B^\nu_{q,r})$ and 
$\|(\bI-\CR_h(\lambda))^{-1}\tilde\bg\|_{B^\nu_{q,r}(\HS)} \leq 2\|\tilde\bg\|_{B^s_{q,r}(\HS)}.$
Let $\bu = \CS_h(\lambda)(\bI+\CR_h(\lambda))\tilde\bg$, and then $\bu 
\in B^{\nu+2}_{q,r}(\HS)$ and $\bu$ satisfies equations \eqref{benteq.1}, that is
\begin{equation}\label{pert.eq1}
\lambda \bu - \alpha \Delta\bu - \beta\nabla\dv\bu + \CR_2\bu + \CR_1\bu =\tilde\bg
\quad\text{in $\HS$}, \quad \bu|_{x_N=0} = 0.
\end{equation}
By  Theorem \ref{thm:hL.1}, we have
\begin{equation*}\label{pert.2}
\|(\lambda, \lambda^{1/2}\bar\nabla, \bar\nabla^2)\bu\|_{B^\nu_{q,r}(\HS)}
\leq C\|(\bI+\CR_h(\lambda))^{-1}\tilde\bg\|_{B^\nu_{q,r}(\HS)} 
\leq 2C\|\tilde\bg\|_{B^\nu_{q,r}(\HS)}.
\end{equation*}
Moreover, for $\tilde\bg \in C^\infty_0(\HS)$, noting that $(\bI + \CR_h(\lambda))\tilde\bg
\in B^{s\pm\sigma}_{q,r}(\HS)$, we have
\begin{align}\label{pert.3}
\|(\lambda, \lambda^{1/2}\bar\nabla, \bar\nabla^2)\bu\|_{B^\nu_{q,r}(\HS)}
\leq C|\lambda|^{-\frac{\sigma}{2}}\|(\bI+\CR_h(\lambda))^{-1}\tilde\bg\|_{B^{s+\sigma}_{q,r}(\HS)} 
\leq 2C|\lambda|^{-\frac{\sigma}{2}}\|\tilde\bg\|_{B^{s+\sigma}_{q,r}(\HS)}, \\
\label{pert.4}
\|(1, \lambda^{-1/2}\bar\nabla)\bu\|_{B^\nu_{q,r}(\HS)}
\leq C|\lambda|^{-(1-\frac{\sigma}{2})}\|(\bI+\CR_h(\lambda))^{-1}\tilde\bg\|_{B^{s-\sigma}_{q,r}(\HS)} 
\leq 2C|\lambda|^{-(1-\frac{\sigma}{2})}\|\tilde\bg\|_{B^{s-\sigma}_{q,r}(\HS)}.
\end{align}
\par
We now define $\bv$ by $\bv = \bu\circ\Phi^{-1}$. From the definition of equations \eqref{pert.eq1}
we see that $\bu$ satisfies equations \eqref{resol.1}. \par
To estimate $\bv$, we shall use the following lemma.
\begin{lem}\label{comp.1} Let $1 < q < \infty$, $1 \leq r \leq \infty-$, and $-1 +1/q < s < 1/q$.
Let $\sigma$ be a small positive number such that $-1+1/q < s-\sigma < s+\sigma < 1/q$. 
Let $\nu \in \{s-\sigma, s, s+\sigma\}$. Then,
we have
\begin{align*}
\|(\lambda, \lambda^{1/2}\bar\nabla, \bar\nabla^2)\bv\|_{B^s_{q,r}(\Omega_+)} 
&\leq C\|(\lambda, \lambda^{1/2}\bar\nabla, \bar\nabla^2)\bu\|_{B^s_{q,r}(\HS)},
\\
\|\tilde\bg\|_{B^\nu_{q,r}(\HS)} &\leq C\|\bg\|_{B^\nu_{q,r}(\Omega_+)}.
\end{align*}
Here, $C$ denote a constant depending on $D$ in \eqref{coef.12}.
\end{lem}
\begin{proof} It is sufficient to prove that for $f \in B^\nu_{q,r}(\HS)$, 
\begin{equation}\label{dif.4}
\|f\circ\Phi^{-1}\|_{B^\nu_{q,r}(\Omega_+)} \leq C\|f\|_{B^\nu_{q,r}(\HS)}. 
\end{equation}
In fact, $\Phi$ is a diffeomorphism of $C^3$ class, and so we can also show that 
$$\|g\circ\Phi\|_{B^\nu_{q,r}(\HS)} \leq C\|g\|_{B^\nu_{q,r}(\Omega_+)} 
\quad\text{for any $g \in B^\nu_{q,r}(\Omega_+)$}.$$ \par
Since $C^\infty_0(\HS)$ is dense in $B^\nu_{q,r}(\HS)$, 
we may assume that $f \in C^\infty_0(\HS)$.
For $0 < \nu < 1/q$, we shall use \eqref{real.1}. We have
\begin{align*}
\|f\circ\Phi^{-1}\|_{L_q(\Omega_+)} &= \Bigl(\int_{\Omega}|f(x)|^q|\det \nabla\Phi(x)|\,dx\Bigr)^{1/q}
\leq \|\det(\nabla\Phi)\|_{L_\infty(\BR^N)}^{1/q}\|f\|_{L_q(\Omega)}, \\
\|\nabla (f\circ\Phi^{-1})\|_{L_q(\Omega_+)} &\leq \|\nabla\Phi^{-1}\|_{L_\infty(\BR^N)}\
\|(\nabla f)\circ\Phi^{-1}\|_{L_q(\Omega_+)}\\
&\leq C(\|\nabla\Phi^{-1}\|_{L_\infty(\BR^N)}\|\det (\nabla\Phi)\|_{L_\infty(\BR^N)}^{1/q}
\|\nabla f\|_{L_q(\HS)}.
\end{align*}
Thus, by \eqref{real.1}, we have \eqref{dif.4}, where $C$ is a constant depending on
$\|\nabla \Phi\|_{L_\infty(\BR^N)}$ and $\|\nabla \Phi^{-1}\|_{L_\infty(\BR^N)}$. \par
Let $-1+1/q < \nu < 0$.  For any $\varphi \in C^\infty_0(\Omega_+)$, we have
\begin{align*}
|(f\circ\Phi^{-1}, \varphi)_{\Omega_+}| = |(f, (\varphi\circ\Phi) (\det(\nabla \Phi)))_{\HS}|
\leq \|f\|_{B^{\nu}_{q,r}(\HS)}\|(\varphi\circ\Phi)\det(\nabla \Phi)\|_{B^{-\nu}_{q',r'}(\HS)}.
\end{align*}
In the similar manner to the proof of Lemma \ref{lem:dif1}, we see that 
$$\|(\varphi\circ\Phi)(\det(\nabla \Phi))\|_{B^{-\nu}_{q',r'}(\HS)}.
 \leq C\|\varphi\circ\Phi \|_{B^{-\nu}_{q', r'}(\HS)}
$$
with some constant $C$ depending on $\|\nabla \Phi\|_{L_\infty(\BR^N)}$ and 
$\|\nabla^2 \Phi\|_{L_\infty(\BR^N)}$. Applying \eqref{dif.4} yields 
$\|\varphi\circ\Phi \|_{B^{-\nu}_{q', r'}(\HS)} \leq C\|\varphi\|_{B^{-\nu}_{q', r'}(\Omega_+)}$. 
For any $\varphi \in C^\infty_0(\Omega_+)$, we have
$$|(f\circ\Phi^{-1}, \varphi)_{\Omega_+}|  \leq C\|f\|_{B^\nu_{q,r}(\HS)}\|\varphi\|_{B^{-\nu}_{q',r'}(\Omega_+)}.
$$
Since $C^\infty_0(\Omega_+)$ is dense in $B^{-\nu}_{q', r'}(\Omega_+)$, this shows \eqref{dif.4}
for $-1+1/q < \nu < 0$.  \par
When $\nu=0$, we use the relation: 
$B^0_{q,r}(\HS) = (B^{-\epsilon}_{q,r}(\HS), B^\epsilon_{q,r}(\HS))_{1/2, r}$
for any small $\epsilon > 0$.  Thus from the results for $\nu\not=0$, it follows that 
\eqref{dif.4} holds for $\nu=0$.  This completes the proof of Lemma \ref{comp.1}.
\end{proof}
{\bf Continuation of the proof of Theorem \ref{thm:bent1}.}  Obviously, using 
\eqref{pert.3}, \eqref{pert.4}, and Lemma \ref{comp.1}, we have 
\begin{align*}
\|(\lambda, \lambda^{1/2}\bar\nabla, \bar\nabla^2)\bv\|_{B^s_{q,r}(\Omega_+)}
&\leq C|\lambda|^{-\frac{\sigma}{2}}\|\bg\|_{B^{s+\sigma}_{q,r}(\Omega_+)}, \\
\|(1, \lambda^{-1/2}\bar\nabla)\bv\|_{B^s_{q,r}(\Omega_+)}
&\leq C|\lambda|^{-(1-\frac{\sigma}{2})}\|\bg\|_{B^{s-\sigma}_{q,r}(\Omega_+)}.
\end{align*}
Moreover, using the properties that $\pd_\lambda \CS_h(\lambda) = -\CS_h(\lambda)
\CS_h(\lambda)$, we see that $\CS_h(\lambda)$ has $(s, \sigma, q, r)$ properties
in $\Omega_+$. 
Recall that  $d>0$ has been chosen so small that the inequality \eqref{small.a.1} holds
and that $\lambda_1>0$ has been chosen so large that the inequality 
\eqref{small.a.2} holds. Thus, $d>0$ and $\lambda_1$ depend on $D$. 
Moreover, the constants appearing in the proof of Theorem \ref{thm:bent1} depend
on $D$ and $\CS_h(\lambda)$. 
But, $\CS_h(\lambda)$ is fixed, and so the constants appearing in the proof of
Theorem \ref{thm:bent1} depend only on $D$.  This completes the proof of 
Theorem \ref{thm:bent1}
\end{proof}

\subsection{On the spectral analysis of  generalized Lam\'e equations in 
$\Omega$}\label{sec.4.4}

In this subsection, we consider the following equations:
\begin{equation}\label{eq.7.1}\begin{aligned}
\eta_0\lambda \bz - \alpha\Delta \bz - \beta\nabla\dv\bz= \bg&
&\quad&\text{in $\Omega$}, \quad \bz|_{\pd\Omega}=0.
\end{aligned}\end{equation}
Here, $\eta_0=\rho_*$ or $\eta_0(x) = \rho_* + \tilde\eta_0(x)$, where 
$\rho_*$ is a positive constant, and $\tilde\eta_0(x) \in B^{N/q+1}_{q,1}(\Omega)$
is a given function.  We assume that $\rho_*$ and 
$\eta_0$ satisfies the condition \eqref{july.2}.  
 In this section, we shall show the following theorem.
\begin{thm}\label{thm:7.1} Let $\epsilon \in (0, \pi/2)$ and $1 \leq r \leq \infty-$. 
\thetag1 Assume that $\eta_0 = \rho_*$.  Let $1 < q < \infty$, $-1+1/q < s <1/q$, and 
$\sigma>0$ such that $-1+1/q < \sigma < 1/q$.
\thetag2 Assume that $\tilde\eta_0\not\equiv0$.  
Let $N-1 < q < 2N$, $1 \leq r \leq \infty-$,  $-1+N/q \leq s < 1/q$,
and $\sigma>0$ such that 
$s+\sigma < 1/q$ and $\sigma < 2N/q-1$.  
Then, there exists a large constant $\lambda_2$
and an operator $\CU_\Omega(\lambda) \in {\rm Hol}\, (\Sigma_{\epsilon, \lambda_2},
\CL(B^\nu_{q,1}(\Omega)^N, B^{\nu+2}_{q,1}(\Omega)^N)$  having 
$(s, \sigma, q, r)$ properties in $\Omega$ 
 such that for any $\lambda \in \Sigma_{\epsilon, \lambda_4}$ and 
$\bg \in B^\nu_{q,1}(\Omega)^N$, $\bz = \CU_\Omega(\lambda)\bg$ is a 
unique solution of equations \eqref{eq.7.1}. 
\end{thm}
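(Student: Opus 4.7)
I would construct $\CU_\Omega(\lambda)$ by a partition-of-unity parametrix argument, assembling local solvers on a finite cover of $\Omega$. Since $\pd\Omega$ is compact, it is covered by finitely many charts $\{U_j\}_{j=1}^M$ on which Theorem \ref{thm:bent1} provides bent half-space solvers $\CS_p^{(j)}(\lambda)$; away from the boundary a single interior chart $U_0$ reduces (after extension by zero) to the whole-space solver $\CS(\lambda)$ of Theorem \ref{thm:L.1}. Let $\{\varphi_j\}_{j=0}^M$ be a subordinate partition of unity and $\{\tilde\varphi_j\}$ slightly larger cutoffs with $\tilde\varphi_j\equiv 1$ on $\supp\varphi_j$. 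Define the parametrix
$$\Phi(\lambda)\bg := \sum_{j=0}^M \tilde\varphi_j\,\CS_j(\lambda)(\varphi_j\bg),$$
where each $\CS_j$ is the local solver for the constant-coefficient Lam\'e system with $\eta_0$ replaced by a frozen value $\eta_0(x_j)$ at a reference point $x_j\in\overline{U_j}$. Writing $A_\lambda\bz:=\eta_0\lambda\bz-\alpha\Delta\bz-\beta\nabla\dv\bz$, a direct computation produces $A_\lambda\Phi(\lambda)=\bI+R(\lambda)$, where
$$R(\lambda)\bg = \sum_j [A_\lambda,\tilde\varphi_j]\,\CS_j(\lambda)(\varphi_j\bg) \, + \, \sum_j \tilde\varphi_j(\eta_0-\eta_0(x_j))\lambda\,\CS_j(\lambda)(\varphi_j\bg).$$
Once I prove $\|R(\lambda)\|_{\CL(B^\nu_{q,r}(\Omega))}\leq 1/2$ for $|\lambda|\geq\lambda_2$, I set $\CU_\Omega(\lambda):=\Phi(\lambda)(\bI+R(\lambda))^{-1}$. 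The $(s,\sigma,q,r)$ properties are inherited from those of the local operators, with derivative bounds obtained by differentiating the resolvent identity as in the earlier subsections.

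\textbf{Case \thetag1 ($\eta_0=\rho_*$).} Dividing \eqref{eq.7.1} by $\rho_*$ reduces the problem to a pure Lam\'e system with constants $\alpha/\rho_*$, $\beta/\rho_*$, so no freezing is needed and the second sum in $R(\lambda)$ vanishes. The commutators $[\Delta,\tilde\varphi_j]$ and $[\nabla\dv,\tilde\varphi_j]$ involve only $\nabla\tilde\varphi_j$ and $\nabla^2\tilde\varphi_j$, which are smooth and compactly supported. Combined with the estimates
$$\|\bar\nabla\CS_j(\lambda)g\|_{B^\nu_{q,r}} \leq C|\lambda|^{-1/2}\|g\|_{B^\nu_{q,r}}, \qquad \|\CS_j(\lambda)g\|_{B^\nu_{q,r}} \leq C|\lambda|^{-1}\|g\|_{B^\nu_{q,r}},$$
from the $(s,\sigma,q,r)$ properties of $\CS_j(\lambda)$, each commutator term contributes $O(|\lambda|^{-1/2})$ to $\|R(\lambda)\|$, so the Neumann series converges for $\lambda_2$ sufficiently large.

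\textbf{Case \thetag2 (variable $\eta_0$).} Here one first chooses the diameters of the $U_j$ small; the commutator contribution is controlled exactly as in Case \thetag1. The freezing error is bounded via Lemma \ref{lem:prod}:
$$\|\tilde\varphi_j(\eta_0-\eta_0(x_j))\lambda\,\CS_j(\lambda)(\varphi_j\bg)\|_{B^\nu_{q,r}} \leq C\,\|\tilde\varphi_j(\eta_0-\eta_0(x_j))\|_{B^{N/q}_{q,\infty}\cap L_\infty}\,\|\varphi_j\bg\|_{B^\nu_{q,r}},$$
using $\|\lambda\,\CS_j(\lambda)g\|_{B^\nu_{q,r}}\leq C\|g\|_{B^\nu_{q,r}}$. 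The $L_\infty$ part of the coefficient bracket is made small by uniform continuity of $\eta_0$ (from $\tilde\eta_0\in B^{s+1}_{q,1}\hookrightarrow B^{N/q}_{q,1}\hookrightarrow BUC$) upon shrinking $\mathrm{diam}(U_j)$. The $B^{N/q}_{q,\infty}$ part is controlled using the higher regularity $\tilde\eta_0\in B^{s+1}_{q,1}$ and a scaling argument on the cutoffs. After the patch sizes are fixed, one takes $\lambda_2$ large to absorb the commutator terms, yielding $\|R(\lambda)\|\leq 1/2$.

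\textbf{Main obstacle.} The principal technical point is the control of $\|\tilde\varphi_j(\eta_0-\eta_0(x_j))\|_{B^{N/q}_{q,\infty}\cap L_\infty}$ by an arbitrarily small constant independent of $\lambda$: the straightforward $L_\infty$ bound alone does not yield smallness in $B^{N/q}_{q,\infty}$ because the cutoffs on small balls have non-trivial high-frequency mass. The resolution exploits the extra index of smoothness in $\tilde\eta_0\in B^{s+1}_{q,1}$, one derivative above the threshold $B^{N/q}_{q,1}$ required by Lemma \ref{lem:prod}; this is precisely the role of the exponent constraints $-1+N/q\leq s$ and $N-1<q<2N$ in part \thetag2. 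In the exterior-domain case a mild additional argument is needed for the unbounded interior patch $U_0$, either invoking a variable-coefficient whole-space solver built by the same perturbation device or exploiting the decay of $\tilde\eta_0$ at infinity inherited from its Besov regularity. Holomorphy of $\CU_\Omega(\lambda)$ is inherited from $\Phi(\lambda)$ and the Neumann series, and uniqueness is immediate from the a priori estimate applied to any two purported solutions.
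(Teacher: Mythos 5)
Your construction is essentially the paper's own proof: it localizes with a finite partition of unity (using compactness of $\pd\Omega$), freezes $\eta_0$ at reference points, invokes the whole-space and bent half-space solvers of Theorems \ref{thm:L.1} and \ref{thm:bent1}, makes the freezing error small through the smallness of $\|\tilde\varphi_{j}(\eta_0-\eta_0(x_j))\|_{B^{N/q}_{q,1}}$ on small patches (and of $\|\psi_R\tilde\eta_0\|_{B^{N/q}_{q,1}}$ for large $R$, which is precisely how the paper treats your unbounded patch $U_0$), and absorbs the commutator terms by a Neumann series for $|\lambda|$ large. The only differences are organizational: the paper first inverts the freezing error patchwise (the local Neumann series $\CS_{x_i,\infty}(\lambda)$, with smallness quoted from the appendix of \cite{DT22} and Lemma 12 of \cite{KSprep}), so that the remaining global error $U(\lambda)$ consists solely of $O(|\lambda|^{-1/2})$ commutator terms, whereas you run a single global Neumann series; and uniqueness is obtained there via solvability of the dual problem rather than from an a priori estimate for arbitrary solutions.
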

\begin{proof}
We only consider the case where $\Omega$ is an exterior domain and
$\tilde\eta_0\not\equiv0$.  Other cases can be proved analogously.  
Below, let $\nu \in \{s-\sigma, s, s+\sigma\}$. 
First, we consider the far field.  Let $\CS$ be the operator given in Theorem \ref{thm:L.1}.
Let $R>0$ be a large positive number such that
$(B_R)^c \subset \Omega$.  Replacing $\lambda$ with $\rho_*\lambda$, we see that
$\bw_R = \CS(\rho_*\lambda)(\tilde\psi_R\bg) \in B^{\nu+2}_{q,r}(\Omega)^N$ 
satisfies equations 
\begin{equation}\label{pert.8.1.1}\begin{aligned}
\rho_*\lambda \bw_R - \alpha\Delta \bw_R -\beta\nabla\dv\bw_R
= \tilde\psi_R\bg&&\quad&\text{in $\BR^N$}
\end{aligned}\end{equation}
for $\lambda \in \Sigma_{\epsilon, \lambda_3/\rho_1}$ and $\bg \in B^\nu_{q,r}(\Omega)^N$. 
Let
$$A_R =\rho_* + \psi_R(x)(\eta_0(x)-\rho_*) = \rho_* + \psi_R(x)\tilde\eta_0(x). $$
From \eqref{pert.8.1.1} it follows that 
\begin{equation*}\label{2.25}\begin{aligned}
A_R\lambda \bw_R - \alpha\Delta\bw_R -\beta\nabla\dv\bw_R 
&= \tilde\psi_R\bg - S_R(\lambda)(\tilde\psi_R\bg) &\quad&\text{in $\BR^N$},
\end{aligned}\end{equation*}
where $S_R(\lambda)$ is defined by 
$$S_R(\lambda)\bh
 = -\psi_R\tilde\eta_0\lambda\CS(\lambda)\bh
$$
for $\bh \in B^\nu_{q,1}(\BR^N)^N$. 
By Lemma \ref{lem:prod} we have
$$\|S_R(\lambda)\bh)\|_{B^s_{q,r}(\BR^N)} 
\leq C\|\psi_R\tilde\eta_0\|_{B^{N/q}_{q,1}(\BR^N)}
\|\lambda \CS(\lambda)\bh\|_{B^s_{q,r}(\BR^N)}.
$$
By Lemma 12 in \cite{KSprep},  for any $\delta > 0$ there exists an $R_0>1$ such that 
$$\|\psi_R \tilde\eta_0\|_{B^{N/q}_{q,1}(\Omega)} < \delta
$$
for any $R > R_0$. Using  Theorem \ref{thm:L.1}  we have
$$\|S_R(\lambda)\bh\|_{B^\nu_{q,r}(\BR^N)} 
\leq C\delta\|\bh\|_{B^\nu_{q,r}(\BR^N)}
$$
for $\nu \in \{s-\sigma, s, s+\sigma\}$. We choose $\delta > 0$ in such a way that 
$C\delta \leq 1/2$, we have 
\begin{equation*}\label{bound.1}\|S_R(\lambda)\bh\|_{B^\nu_{q,r}(\BR^N)} 
\leq (1/2)\|\bh\|_{B^\nu_{q,r}(\BR^N)}
\end{equation*}
for  and $R > R_0$. \par
We define $\CS_{R, \infty}$ by 
$\CS_{R, \infty}(\lambda) = \sum_{\ell=0}^\infty S_R(\lambda)^\ell$,
and then we have 
\begin{align*}
\|\CS_{R, \infty}(\lambda)\bh\|_{B^\nu_{q,r}(\BR^N)} 
&\leq \sum_{\ell=0}^\infty(1/2)^\ell\|\bh\|_{B^\nu_{q,r}(\BR^N)}
= 2\|\bh\|_{B^\nu_{q,r}(\BR^N)}.
\end{align*}
Let $\bv_R = \CS(\rho_*\lambda)S_{R, \infty}(\lambda)\tilde\psi_R\bg$.   Then, $\bv_R$
satisfies the equations:
\begin{equation}\label{pert.8.1.2}\begin{aligned}
A_R\lambda \bv_R - \alpha\Delta\bv_R- \beta\nabla \dv\bv_R
&= G_R&\quad&\text{in $\Omega$}, \quad \bv_R|_{\pd\Omega} = 0. 
\end{aligned}\end{equation}
Here, 
\begin{align*}
G_R &= S_{R,\infty}(\lambda)\tilde\psi_R\bg - S_R(\lambda)S_{R, \infty}(\lambda)\tilde\psi_R\bg
 = \tilde\psi_R\bg+\sum_{\ell=1}^\infty S_R(\lambda)^\ell\tilde\psi_R\bg
-S_R(\lambda)\sum_{\ell=0}^\infty S_R(\lambda)^\ell\tilde\psi_R\bg
= \tilde\psi_R\bg.
\end{align*}
Since $\CS$ has $(s, \sigma, q,r)$ properties, 
as follows from Theorem \ref{thm:L.1}, we have
\begin{equation}\label{est.7.2}\begin{aligned}
\|(\lambda, \lambda^{1/2}\bar\nabla, \bar\nabla^2)\CS(\rho_*\lambda)S_{R, \infty}(\lambda)
\bh\|_{B^\nu_{q,r}(\BR^N)}
&\leq C\|\bh\|_{B^\nu_{q,r}(\BR^N)}, \\
\|(\lambda^{1/2}\bar\nabla, \bar\nabla^2)
\CS(\rho_*\lambda)S_{R, \infty}(\lambda)\bh\|_{B^s_{q,r}(\BR^N)}
&\leq C|\lambda|^{-\frac{\sigma}{2}}\|\bh\|_{B^{s+\sigma}_{q,r}(\BR^N)},\\
\|(1, \lambda^{-1/2}\bar\nabla)\CS(\rho_*\lambda)S_{R, \infty}(\lambda)\bh\|_{B^s_{q,r}(\BR^N)}
& \leq C|\lambda|^{-(1-\frac{\sigma}{2})}\|\bh\|_{B^{s-\sigma}_{q,r}(\BR^N)}.
\end{aligned}\end{equation}
\par

Let $\bu_R = \psi_R(x)\bv_R$. Setting 
\begin{equation*}\label{ur.1}\begin{aligned}
U_R(\lambda)\bg &= -(2\alpha (\nabla\tilde\psi_R)\nabla \CS(\rho_*\lambda)S_{R, \infty}(\lambda)\tilde\psi_R\bg + \alpha(\Delta\tilde\psi_R) \CS(\rho_*\lambda)S_{R, \infty}(\lambda)\tilde\psi_R\bg
\\
&+\beta\nabla((\nabla\tilde\psi_R) \CS(\rho_*\lambda)S_{R, \infty}(\lambda)\tilde\psi_R\bg) + \beta(\nabla\tilde\psi_R)\nabla \CS(\rho_*\lambda)S_{R, \infty}(\lambda)\tilde\psi_R\bg).
\end{aligned}\end{equation*}
and using the facts that $\tilde\psi_R\psi_R=\psi_R$ and 
$A_R\varphi_R = \eta_0$,   from \eqref{pert.8.1.2} we see that
$\bu_R$ satisfies equations:
\begin{equation}\label{pert.8.1.3}\begin{aligned}
\eta_0 \bu_R - \alpha\Delta\bu_R-\beta\nabla \dv\bu_R
&= \psi_R\bg - U_R(\lambda)\bg&\quad&\text{in $\Omega$}, \quad
\bu_R|_{\pd\Omega}=0.
\end{aligned}\end{equation}
\par
Let $x_0 \in \Omega$ and $x_1 \in \pd\Omega$. 
Let $\lambda_1 > 0$ and $\CS_p$ be respective the constant and the operator given in
Theorem \ref{thm:bent1}.  
Let $d_{x_0}$ and $d_{x_1}$ be 
two small posive numbers  such that   
$B_{4d_{x_0}}(x_0) \subset \Omega$ and $B_{4d_{x_1}}(x_1) \cap \Omega_+ 
\subset \Omega$. Below,  $i=0$ or $1$ and in Theorem \ref{thm:L.1} we choose 
$\lambda_0=\lambda_1$. 
Let $\CS_i(\lambda)$ ($i=0,1$) 
be defined by $\CS_0(\lambda) = \CS(\lambda)$ for $i=0$ and 
$\CS_1(\lambda) = \CS_p(\lambda)$ for $i=1$.  From the assumption \eqref{july.2}
$\rho_1 \leq \eta_0(x_i) \leq \rho_2$, and so
for  $\lambda \in \Sigma_{\epsilon, \lambda_1\rho_1^{-1}}$,
$\bw_{x_i}=\CS_i(\eta_0(x_i)\lambda)(\tilde\varphi_{x_i, d_{x_i}}\bg)
\in B^{\nu+2}_{q,1}(\Omega)^N$ satisfying the equations: 
\begin{equation*}\label{pert.8.2.1}\begin{aligned}
\eta_0(x_0)\lambda \bw_{x_0} - \alpha \Delta \bw_{x_0}-\beta\nabla\dv\bw_{x_0}
= \tilde\varphi_{x_0, d_{x_0}}\bg&&\quad&\text{in $\BR^N$}, \\
\eta_0(x_1)\lambda \bw_{x_1} - \alpha\Delta \bw_{x_1} - \beta\nabla\dv\bw_{x_1}
= \tilde\varphi_{x_1, d_{x_1}}\bg&& \quad &\text{in $\Omega$}, \quad
\bw_{x_1}|_{\pd\Omega} = 0.
\end{aligned}\end{equation*}
 Let
$$A_{x_i} =\eta_0(x_i)+ \tilde\varphi_{x_i, d_{x_i}}(x)(\eta_0(x)-\eta_0(x_i)). $$
We have
\begin{equation*}\label{2.25.2}\begin{aligned}
A_{x_0}\lambda \bw_{x_0} - \alpha\Delta\bw_{x_0} -\beta\nabla\dv \bw_{x_0}
&= \tilde\varphi_{x_0, d_{x_0}}\bg - 
S_{x_0}(\lambda)\tilde\varphi_{x_0, d_{x_0}}\bg &\quad&\text{in $\BR^N$}, \\
A_{x_1}\lambda \bw_{x_1} - \alpha\Delta\bw_{x_1} -\beta\nabla\dv \bw_{x_1}
&= \tilde\varphi_{x_1, d_{x_1}}\bg - 
S_{x_1}(\lambda)\tilde\varphi_{x_1, d_{x_1}}\bg &\quad&\text{in $\Omega$}, \quad
\bw_{x_1}|_{\pd\Omega}=0, 
\end{aligned}\end{equation*}
where we have set
$$S_{x_i}(\lambda)\bh 
 = -\tilde\varphi_{x_i, d_{x_i}}(x)(\eta_0(x)-\eta_0(x_i))\lambda\CS_i(\eta_0(x_i)\lambda)\bh.
$$
By Lemma \ref{lem:prod} we have
\begin{align*}
\|S_{x_i}(\lambda)\bh\|_{B^\nu_{q,1}(D_i)} 
&\leq C\|\tilde\varphi_{x_i, d_{x_i}}(\eta_0(\cdot)-\eta_0(x_i))\|_{B^{N/q}_{q,1}(D_i)}
\|\lambda\CS_i(\eta_0(x_0)\lambda)\bh\|_{B^s_{q,1}(D_i)}.
\end{align*}
Here and in the following, $D_0=\BR^N$ and $D_1=\Omega_+$. 
By Appendix in \cite{DT22},  for any $\delta > 0$ there exists a $d_0$ 
uniformly with respect to $x_i$ such that 
\begin{align*}
\|\tilde\varphi_{x_i, d_{x_i}}(\eta_0(\cdot)-\eta_0(x_i))\|_{B^{N/q}_{q,1}(D_i)} &< \delta
\end{align*}
provided $0 < d_{x_i} \leq d_0$. By  Theorems \ref{thm:L.1} and \ref{thm:bent1},  we have
\begin{align*}
\|\lambda \CS_i(\eta_0(x_i)\lambda)\bh \|_{B^\nu_{q,r}(D_i)} 
\leq C\delta\|\bh\|_{B^\nu_{q,1}(D_i)}, \\
\end{align*}
for $\nu \in \{s-\sigma, s, s+\sigma\}$ and $0 < d_{x_0} \leq d_0$. 
We choose $\delta > 0$ in such a way that 
$C\delta \leq 1/2$, 
\begin{align*}
\|S_{x_i}(\lambda)\bh\|_{B^\nu_{q,r}(D_i)} 
&\leq (1/2)\|\bh\|_{B^s_{q,r}(D_i)}
\end{align*}
for $\nu \in \{s-\sigma, s, s+\sigma\}$ and $0 < d_{x_i} \leq d_0$. \par
We define $\CS_{x_i, \infty}$ by 
$\CS_{x_i, \infty}(\lambda) = \sum_{\ell=0}^\infty S_{x_i}(\lambda)^\ell$,
and then we have 
\begin{align*}
\|\CS_{x_i, \infty}(\lambda)\bh\|_{B^\nu_{q,r}(D_i)} \leq 2\|\bh\|_{B^\nu_{q,r}(D_i)}.
\end{align*}
Let $\bv_{x_i} = \CS_i(\eta_0(x_i)\lambda) \CS_{x_i, \infty}(\lambda)\tilde
\varphi_{x_i, d_{x_i}}\bg$.  Then, $\bv_{x_i}$
satisfies the equations:
\begin{equation}\label{pert.8.2.2}\begin{aligned}
A_{x_0}\lambda \bv_{x_i} - \alpha\Delta \bv_{x_i}- \beta\nabla\dv\bv_{x_i}
&= G_{x_i}&\quad&\text{in $\Omega$}, \quad v_{x_i}|_{\pd\Omega}=0.
\end{aligned}\end{equation}
Here, 
\begin{align*}
G_{x_i} &= \CS_{x_i,\infty}(\lambda)\tilde\varphi_{x_i, d_{x_i}}
\bg - S_{x_i}(\lambda)\CS_{x_i, \infty}(\lambda)\tilde\varphi_{x_i, d_{x_i}}\bg
\\
& = \tilde\varphi_{x_i, d_{x_i}}\bg+\sum_{\ell=1}^\infty 
S_{x_i}(\lambda)^\ell\tilde\varphi_{x_i, d_{x_i}}\bg
-S_{x_i}(\lambda)\sum_{\ell=0}^\infty S_{x_i}(\lambda)^\ell\tilde\varphi_{x_i, d_{x_i}}\bg
 = \tilde\varphi_{x_i, d_{x_i}}\bg.
\end{align*}
Noting that $\|S_{x_i, \infty}\bh\|_{B^\nu_{q,r}(D_i)}
\leq 2\|\bh\|_{B^\nu_{q,1}(D_i)}$, 
by Theorems \ref{thm:L.1} and \ref{thm:bent1}, we have
\begin{equation}\label{est.7.3}\begin{aligned}
\|(\lambda, \lambda^{1/2}\bar\nabla, \bar\nabla^2)
\CS_i(\eta_0(x_i)\lambda)S_{R, \infty}(\lambda)\bh\|_{B^\nu_{q,1}(D_i)}
&\leq C\|\bh\|_{B^\nu_{q,r}(D_i)}, \\
\|(\lambda^{1/2}\bar\nabla, \bar\nabla^2)\CS_i(\eta_0(x_i)\lambda)S_{R, \infty}(\lambda)
\bh\|_{B^s_{q,r}(D_i)}
&\leq C|\lambda|^{-\frac{\sigma}{2}}\|\bh\|_{B^{s+\sigma}_{q,1}(D_i)},\\
\|(1, \lambda^{-1/2}\bar\nabla)\CS_i(\eta_0(x_i)\lambda)S_{R, \infty}(\lambda)\bh
\|_{B^s_{q,1}(D_i)}
& \leq C|\lambda|^{-(1-\frac{\sigma}{2})}\|\bh\|_{B^{s-\sigma}_{q,1}(D_i)}.
\end{aligned}\end{equation}

Let $\bu_{x_i} = \varphi_{x_i, d_{x_i}}(x)\bw_{x_i}$.
Using the fact that $\tilde\varphi_{x_i, d_{x_i}}\varphi_{x_i, d_{x_i}} = \varphi_{x_i, d_{x_i}}$
and that 
$A_{x_i}\varphi_{x_0} = \eta_0(x)$,  and setting 
\begin{equation*}\label{ur.2}\begin{aligned}
U_{x_i}(\lambda)\bg& = -(2\alpha (\nabla\tilde\varphi_{x_i, d_{x_i}})\nabla
\CS_i(\eta_0(x_i)\lambda) \CS_{x_i, \infty}(\lambda)\tilde
\varphi_{x_i, d_{x_i}}\bg
+ \alpha(\Delta\tilde\varphi_{x_i, d_{x_i}})\CS_i(\eta_0(x_i)\lambda) \CS_{x_i, \infty}(\lambda)\tilde
\varphi_{x_i, d_{x_i}}\bg \\
&+\beta\nabla((\nabla\tilde\varphi_{x_i, d_{x_i}})\CS_i(\eta_0(x_i)\lambda) \CS_{x_i, \infty}(\lambda)\tilde
\varphi_{x_i, d_{x_i}}\bg) + 
\beta(\nabla\tilde\varphi_{x_i, d_{x_i}})\nabla\CS_i(\eta_0(x_i)\lambda) \CS_{x_i, \infty}(\lambda)\tilde
\varphi_{x_i, d_{x_i}}\bg)
\end{aligned}\end{equation*}
 from \eqref{pert.8.2.2} we see that
$\bu_{x_i}$ satisfies equations:
\begin{equation}\label{pert.8.2.3}\begin{aligned}
\eta_0\lambda \bu_{x_i} - \alpha \Delta\bu_{x_i}- \beta\nabla\dv\bu_{x_i}
&= \varphi_{x_i}\bg - U_{x_i}(\lambda)\bg&\quad&\text{in $\Omega$}, \quad 
\bw_{x_i}|_{\pd\Omega}=0.
\end{aligned}\end{equation}
\par

Now, we shall prove the theorem. 
Notice that $\Omega \cup \pd\Omega = (B_{2R})^c \cup \overline{\Omega \cap B_{2R}}.$
Since $\overline{\Omega \cap B_{2R}}$ is a compact set, 
there exist a finte set $\{x^0_j\}_{j=1}^{m_0}$ of 
points of $\Omega$ and a finite set $\{x^1_j\}_{j=1}^{m_1}$ of points of  $\pd\Omega$ such that 
$\overline{\Omega} \subset (B_{2R})^c \cup (\bigcup_{j=1}^{m_0} B_{d_{x^0_j}/2}(x^0_j)) 
\cup (\bigcup_{j=1}^{m_1}B_{d_{x^1_j}/2}(x^1_j))$. 
Let $\Phi(x) = \varphi_R(x) + (\sum_{j=1}^{m_0} \varphi_{x^0_j}(x)) + (\sum_{j=1}^{m_1}
\varphi_{x^1_j}(x))$.  Obviously, $\Phi(x) \in C^\infty(\overline{\Omega})$ 
and $\Phi(x) \geq 1$
for $x \in \overline{\Omega}$.  Thus, set 
$\omega_0(x) = \varphi_{R}(x)/\Phi(x)$, $\omega_j(x) =  \varphi_{x^0_j}(x)/\Phi(x)$ 
($j=1, \ldots, m_0$), 
and $\omega_{m_0+j}(x) = \varphi_{x^1_j}(x)/\Phi(x)$ ($j=1, \ldots, m_1$).  Then, 
$\{\omega_j\}_{j=0}^{m_0+m_1}$ is a partition of unity on $\overline{\Omega}$. 
We  define an operator $T_\Omega(\lambda)$  and $U(\lambda)$ by 
\begin{align*}
T_\Omega(\lambda)\bg &= \omega_0\CS(\rho_*\lambda)S_{R, \infty}\tilde\psi_R\bg
 + \sum_{j=1}^{m_0}\omega_j \CS(\eta_0(x^0_j)\lambda)S_{x^0_j, \infty}(\lambda)
\tilde\varphi_{x^0_j, d_{x^0_j}}\bg \\
&+\sum_{j=1}^{m_1}\omega_{m_0+j} 
\CS_p(\eta_0(x^1_{m_0+j})\lambda)S_{x^1_{m_0+j}, \infty}(\lambda)
\tilde\varphi_{x^1_{m_0+j}, d_{x^1_{m_0+j}}}\bg, \\
U(\lambda)\bg &= U_R(\lambda)\bg + \sum_{j=1}^{m_0} U_{x^0_j}(\lambda)\bg
+ \sum_{j=1}^{m_1}U_{x^1_j}(\lambda)\bg. 
\end{align*}
Then, from \eqref{pert.8.1.3} and \eqref{pert.8.2.3} we see that
$\bu= T_\Omega(\lambda)\bg$ satisfies the equations 
\begin{equation*}\label{eq:8.1*}\begin{aligned}
\lambda \bu- \alpha \Delta \bu -\beta\nabla\dv\bu = \bg - U(\lambda)\bg
&&\quad&\text{in $\Omega$},  \quad \bu|_{\pd\Omega}=0.
\end{aligned}\end{equation*}
Since the summation is finite, by \eqref{est.7.2} and 
 \eqref{est.7.3},
we see that $T_\Omega(\lambda)$ satisfies the estimates:
\begin{equation}\label{est.7.5*}\begin{aligned}
\|(\lambda, \lambda^{1/2}\bar\nabla, \bar\nabla^2)
\CT_\Omega(\lambda)\bg \|_{B^\nu_{q,1}(\Omega)}
&\leq C\|\bg\|_{B^\nu_{q,r}(\Omega)}, \\
\|(\lambda^{1/2}\bar\nabla, \bar\nabla^2)\CT_\Omega(\lambda)\bg\|_{B^s_{q,r}(\Omega)}
&\leq C|\lambda|^{-\frac{\sigma}{2}}\|\bh\|_{B^{s+\sigma}_{q,1}(\Omega)},\\
\|(1, \lambda^{-1/2}\bar\nabla)\CT_\Omega(\lambda)\bg
\|_{B^s_{q,1}(\Omega)}
& \leq C|\lambda|^{-(1-\frac{\sigma}{2})}\|\bh\|_{B^{s-\sigma}_{q,1}(\Omega)}.
\end{aligned}\end{equation}

For $U(\lambda)$, we have 
$$\|U(\lambda)\bg\|_{B^s_{q,1}(\Omega)}
\leq C(\|\bv_R\|_{B^{s+1}_{q,1}(\Omega)} + \sum_{j=1}^{m_0}
\|\bv_{x^0_j}\|_{B^{s+1}_{q,1}(\Omega)} + \sum_{j=1}^{m_1}\|\bv_{x^1_j}\|_{B^{s+1}_{q,1}(\Omega)})
\leq C|\lambda|^{-1/2}\|\bg\|_{B^s_{q,1}(\Omega)}.
$$
Choosing $\lambda_2 \geq \lambda_1\rho_1^{-1}$ in such a way that 
$C\lambda_2^{-1} \leq 1/2$, we have
$\|U(\lambda)\bg\|_{B^s_{q,1}(\Omega)} \leq (1/2)\|\bg\|_{B^s_{q,1}(\Omega)}$,
and so $(\bI - U(\lambda))^{-1}$ exists. Thus, we define an operator $\CU_\Omega(\lambda)$
by $\CU_\Omega(\lambda)= T_\Omega(\lambda)(\bI-U(\lambda))^{-1}$.  Then, 
for $\bg \in B^\nu_{q,1}(\Omega)$, $\bz = \CU_\Omega(\lambda)\bg$
 is a solution of equations \eqref{eq.7.1}.  From \eqref{est.7.5*} we see that 
$\CU_\Omega(\lambda)\bg$ satisfies estimates
\begin{align}
\|(\lambda, \lambda^{1/2}\bar\nabla, \bar\nabla^2) \CU_\Omega(\lambda)\bg
\|_{B^s_{q,1}(\Omega)} 
&\leq C\|\bg\|_{B^s_{q,r}(\Omega)}\nonumber \\
\|(\lambda, \lambda^{1/2}\bar\nabla, \bar\nabla^2) \CU_\Omega(\lambda)\bg
\|_{B^s_{q,r}(\Omega)}
& \leq C|\lambda|^{-\frac{\sigma}{2}}\|\bg\|_{B^{s+\sigma}_{q,1}(\Omega)}, \nonumber\\
\|(1, \lambda^{-1/2}\bar\nabla) \CU_\Omega(\lambda)\bg\|_{B^s_{q,1}(\Omega)} & \leq 
C|\lambda|^{-(1-\frac{\sigma}{2})}\|\bg\|_{B^{s-\sigma}_{q,1}(\Omega)}. \label{est.7.6}
\end{align}
The uniqueness of solutions follows from the existence of solutions to the dual problem. 
Differentiating equations \eqref{eq.7.1},
$$\eta_0\lambda \pd_\lambda\bz - \alpha\Delta\pd_\lambda\pd_\lambda\bz
-\beta\nabla\dv\pd_\lambda \bz = -\eta_0\bz\quad\text{in $\Omega$},
\quad \pd_\lambda\bz|_{\pd\Omega}=0.
$$
By the uniqueness of solutions, we have $\pd_\lambda \bz = -\CU_\Omega(\lambda)(
\eta_0\CU_\Omega(\lambda)\bg)$.  By \eqref{est.7.6}  and Lemma \ref{lem:prod}, we have
\begin{align*}
\|(\lambda, \lambda^{1/2}\bar\nabla, \bar\nabla^2)\pd_\lambda \CU_\Omega(\lambda)\bg
\|_{B^\nu_{q,1}(\Omega)}
& \leq C\|\eta_0\CU_\Omega(\lambda)\bg\|_{B^\nu_{q,1}(\Omega)}
\leq C(\rho_* + \|\tilde\eta_0\|_{B^{N/q}_{q,1}(\Omega)})
\|\CU_\Omega(\lambda)\bg\|_{B^\nu_{q,1}(\Omega)} \\
&\leq C(\rho_* + \|\tilde\eta_0\|_{B^{N/q}_{q,1}(\Omega)})|\lambda|^{-1}
\|\bg\|_{B^\nu_{q,1}(\Omega)} \\
\|(\lambda, \lambda^{1/2}\bar\nabla, \bar\nabla^2)\pd_\lambda \CU_\Omega(\lambda)\bg
\|_{B^\nu_{q,1}(\Omega)}
& \leq C\|\eta_0\bz\|_{B^\nu_{q,1}(\Omega)}
\leq C(\rho_* + \|\tilde\eta_0\|_{B^{N/q}_{q,1}(\Omega)})
\|\CU_\Omega(\lambda)\bg\|_{B^\nu_{q,1}(\Omega)} \\
&\leq C(\rho_* + \|\tilde\eta_0\|_{B^{N/q}_{q,1}(\Omega)})|\lambda|^{-(1-\frac{\sigma}{s})}
\|\bg\|_{B^{s-\sigma}_{q,1}(\Omega)} 
\end{align*}
Thus, we have proved that $\CU_\Omega$ has $(s, \sigma, q, r)$ properties. 
This completes the proof of Theorem \ref{thm:7.1}.
\end{proof}

\section{On the sepctral analysis of the Stokes equations in $\Omega$}
 \label{sec.5}

In view of Propositions \ref{prop:L1} and \ref{prop:L2}, to prove the $L_1$ properties
of solutions to equations \eqref{Eq:Linear}, we have to show the spectram properties
of the the resolvent problem of the Stokes equations, which read
as
\begin{equation}\label{eq.8.1}\left\{\begin{aligned}
\lambda \rho + \eta_0 \dv \bu = f&&\quad&\text{in $\Omega$},  \\
\eta_0\lambda \bu - \alpha\Delta \bu - \beta\nabla \dv\bu + \nabla(P'(\eta_0)\rho) = \bg
&&\quad&\text{in $\Omega\times(0, T)$}, \\
\bu|_{\pd\Omega}=0&.
\end{aligned}\right.\end{equation}  
 Let $\eta_0=\rho_*+\tilde\eta_0$ and we assume that  
the assumption \eqref{july.2} holds.
We shall prove the following theorem.
\begin{thm}\label{thm:8.1} Let $0 < \epsilon < \pi/2$. 
\thetag1 If $\eta_0 = \rho_*$, then  $1 < q < \infty$, and $-1+1/q < s < 1/q$.
\\
\thetag2 If $\tilde\eta_0 \not\equiv0$ and $\tilde\eta_0 \in B^{N/q+1}_{q,1}(\Omega)$,
then  $N-1 < q < 2N$, $1 \leq r \leq \infty-$, 
$-1+N/q \leq s<1/q$.   \\ Let
\begin{equation*}\label{space.8.1}\begin{aligned}
&\CH^s_{q,1}(\Omega) = B^{s+1}_{q,1}(\Omega)\times B^s_{q,1}(\Omega)^N, 
\quad
\|(f, \bg)\|_{\CH^s_{q,1}(\Omega)} = \|f\|_{B^{s+1}_{q,1}(\Omega)} + \|\bg\|_{B^s_{q,1}(\Omega)},
\\
&\CD^s_{q,1}(\Omega) = \{(\rho, \bu) \in B^{s+1}_{q,1}(\Omega)\times B^{s+2}_{q,1}(\Omega)^N
\mid \bu|_{\pd\Omega}=0\}, \quad 
\|(\rho, \bu)\|_{\CD^s_{q,1}(\Omega)} = \|\rho\|_{B^{s+1}_{q,1}(\Omega)}
+ \|\bg\|_{B^{s+2}_{q,1}(\Omega)}.
\end{aligned}\end{equation*}
Then, there exists a large positive number 
$\lambda_3$ and an operator $\CA_\Omega(\lambda) \in 
{\rm Hol}\,(\Sigma_{\epsilon, \lambda_3}, 
\CL(\CH^s_{q,1}(\Omega), \CD^s_{q,1}(\Omega))$ such that 
$(\rho, \bu) = \CA_\Omega(\lambda)(f, \bg)$ is a unique solution of equations
\eqref{eq.8.1} for any $\lambda \in \Sigma_{\epsilon, \lambda_3}$ and 
$(f, \bg) \in \CH^s_{q,1}(\Omega)$, which satisfies the estimate:
$$|\lambda|\|\CA_\Omega(\lambda)(f, \bg)\|_{\CH^s_{q,1}(\Omega)}
+ \|\CA_\Omega(\lambda)(f, \bg)\|_{\CD^s_{q,1}(\Omega)}
\leq C\|(f, \bg)\|_{\CH^s_{q,1}(\Omega)}.
$$
\par
Moreover, there exist three operators  $\CB_v(\lambda)$, $\CC_m(\lambda)$ 
and $\CC_v(\lambda)$
such that 
\begin{enumerate}
\item $\CB_v(\lambda) \in {\rm Hol}\, (\Sigma_{\epsilon, \lambda_3}, 
\CL(B^\nu_{q,r}(\Omega)^N, B^{\nu+2}_{q,r}(\Omega)^N))$, 
\quad
$\CC_m(\lambda) \in {\rm Hol}\, (\Sigma_{\epsilon, \lambda_3}, 
\CL(\CH^s_{q,r}, B^{s+1}_{q,r}(\Omega))$, \quad 
$\CC_v(\lambda) \in {\rm Hol}\, (\Sigma_{\epsilon, \lambda_3}, 
\CL(B^\nu_{q,r}(\Omega)^N, B^{\nu+2}_{q,r}(\Omega)^N))$.  And,
$\CA_\Omega(\lambda)(f, \bg) = (\CC_m(\lambda)(f, \bg), \CB_v(\lambda)\bg
+ \CC_v(\lambda)(f, \bg))$ for any $\lambda \in \Sigma_{\epsilon, \lambda_3}$
and $(f, \bg) \in \CH^s_{q,r}$.
\item $\CB_v(\lambda)$ has a $(s,\sigma,q,1)$ property in $\Omega$,
$\CC_m(\lambda)$ has generalized resolvent properties for $X = \CH^s_{q,1}(\Omega)$
and $Y= B^{s+1}_{q,1}(\Omega)$, 
and $(\lambda, \lambda^{1/2}\bar\nabla, \bar\nabla^2)\CC_v(\lambda)$ 
has  generalized resolvent properties 
for $X = \CH^s_{q,1}(\Omega)$ and $Y=B^s_{q,1}(\Omega)$, respectively. 
\end{enumerate}
\end{thm}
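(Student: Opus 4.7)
The plan is to decouple the system \eqref{eq.8.1} by eliminating the density and to treat the result as a perturbation of the Lam\'e equations \eqref{eq.7.1}, for which Theorem \ref{thm:7.1} supplies the solution operator $\CU_\Omega(\lambda)$ enjoying the $(s,\sigma,q,1)$ properties. Concretely, I would set $\bu = \CU_\Omega(\lambda)\bg + \bv$, which turns the momentum equation into one with only the pressure gradient on the right-hand side; together with the density equation this gives
\begin{align*}
\lambda \rho + \eta_0 \dv \bv &= f - \eta_0 \dv (\CU_\Omega(\lambda)\bg) =: F_\lambda(f,\bg), \\
\eta_0\lambda \bv - \alpha\Delta \bv - \beta\nabla \dv \bv + \nabla(P'(\eta_0)\rho) &= 0 \quad\text{in $\Omega$}, \quad \bv|_{\pd\Omega}=0.
\end{align*}
Using the first equation to write $\rho = \lambda^{-1}(F_\lambda - \eta_0 \dv \bv)$ and substituting into the second produces the single perturbed Lam\'e equation
\begin{equation*}
\eta_0\lambda \bv - \alpha\Delta \bv - \beta\nabla \dv \bv - \lambda^{-1}\nabla(P'(\eta_0)\eta_0 \dv \bv) = -\lambda^{-1}\nabla(P'(\eta_0) F_\lambda) \quad\text{in $\Omega$},
\end{equation*}
with $\bv|_{\pd\Omega}=0$, which is the key reduction.

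Applying $\CU_\Omega(\lambda)$ converts this to the operator equation $(I - K(\lambda))\bv = -\lambda^{-1}\CU_\Omega(\lambda)\nabla(P'(\eta_0) F_\lambda)$, where
\begin{equation*}
K(\lambda)\bv := \lambda^{-1}\CU_\Omega(\lambda)\bigl[\nabla(P'(\eta_0)\eta_0 \dv \bv)\bigr].
\end{equation*}
The $(s,\sigma,q,1)$ property of $\CU_\Omega(\lambda)$ from Theorem \ref{thm:7.1}, together with the product estimate in Lemma \ref{lem:prod} applied to the multiplier $P'(\eta_0)\eta_0 \in B^{N/q}_{q,1}\cap L_\infty$, yields
\begin{equation*}
\|K(\lambda)\bv\|_{B^{s+2}_{q,1}(\Omega)} \leq C|\lambda|^{-1}\|P'(\eta_0)\eta_0\|_{B^{N/q}_{q,1}\cap L_\infty(\Omega)}\|\bv\|_{B^{s+2}_{q,1}(\Omega)},
\end{equation*}
so that for $\lambda_3$ chosen large enough that the prefactor is at most $1/2$, $I-K(\lambda)$ is invertible by Neumann series. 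The decomposition required by the theorem is then read off as
\begin{equation*}
\CB_v(\lambda) := \CU_\Omega(\lambda), \qquad \CC_v(\lambda)(f,\bg) := \bv, \qquad \CC_m(\lambda)(f,\bg) := \lambda^{-1}(F_\lambda - \eta_0\dv\bv).
\end{equation*}

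The verification of the stated properties is then mostly bookkeeping. The $(s,\sigma,q,1)$ property of $\CB_v = \CU_\Omega$ is exactly Theorem \ref{thm:7.1}. For $\CC_v(\lambda)$, the explicit factor $\lambda^{-1}$ together with the $(\lambda,\lambda^{1/2}\bar\nabla,\bar\nabla^2)$-bound from Theorem \ref{thm:7.1} and $\|\nabla(P'(\eta_0) F_\lambda)\|_{B^s_{q,1}} \leq C\|F_\lambda\|_{B^{s+1}_{q,1}} \leq C\|(f,\bg)\|_{\CH^s_{q,1}(\Omega)}$ give
\begin{equation*}
\|(\lambda,\lambda^{1/2}\bar\nabla,\bar\nabla^2)\CC_v(\lambda)(f,\bg)\|_{B^s_{q,1}(\Omega)} \leq C|\lambda|^{-1}\|(f,\bg)\|_{\CH^s_{q,1}(\Omega)},
\end{equation*}
while differentiating the defining equation in $\lambda$ (and using $\pd_\lambda \CU_\Omega(\lambda) = -\CU_\Omega(\lambda)\eta_0\CU_\Omega(\lambda)$ from the end of the proof of Theorem \ref{thm:7.1}) supplies the extra $|\lambda|^{-1}$ needed in Definition \ref{dfn.2}. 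For $\CC_m(\lambda)$ the bound $\|\CC_m(\lambda)(f,\bg)\|_{B^{s+1}_{q,1}}\leq C|\lambda|^{-1}\|(f,\bg)\|_{\CH^s_{q,1}}$ follows from $\|\dv \CU_\Omega(\lambda)\bg\|_{B^{s+1}_{q,1}}\leq C\|\bg\|_{B^s_{q,1}}$ (which comes from the $\bar\nabla^2$-bound in \eqref{L1prop}) together with the corresponding bound on $\dv\bv$, and the $\pd_\lambda$-estimate is obtained by the same differentiation. Uniqueness in the stated class follows from the same Neumann-series argument applied to the homogeneous problem.

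The main obstacle I expect is the sharpness of the contraction estimate for $K(\lambda)$ in the Besov scale: the multiplier product $P'(\eta_0)\eta_0 \cdot \dv \bv$ must be estimated against an operator that yields two derivatives of regularity $s$, and this forces exactly the product-estimate range $N-1 < q < 2N$, $-1+N/q \le s < 1/q$ via Lemma \ref{lem:prod}. When $\tilde\eta_0 \equiv 0$ (part (1)) the multiplier is constant, Lemma \ref{lem:prod} is unneeded, and the wider range $1<q<\infty$, $-1+1/q < s < 1/q$ is available, which is precisely the dichotomy in the statement.
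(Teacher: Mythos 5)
Your overall scheme is the paper's: eliminate $\rho$ via $\rho=\lambda^{-1}(f-\eta_0\dv\bu)$, treat the resulting equation as a perturbation of the Lam\'e problem whose solution operator $\CU_\Omega(\lambda)$ (Theorem \ref{thm:7.1}) carries the $(s,\sigma,q,1)$ properties, invert by a Neumann series for $|\lambda|$ large, read off $\CB_v(\lambda)=\CU_\Omega(\lambda)$, let $\CC_v$ be the correction and $\CC_m$ the reconstructed density, and obtain the generalized resolvent properties by differentiating in $\lambda$ (using $\pd_\lambda\CU_\Omega(\lambda)=-\CU_\Omega(\lambda)\eta_0\CU_\Omega(\lambda)$). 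Whether you run the fixed point on the solution side ($I-K(\lambda)$ acting on $\bv$) or, as the paper does, on the data side ($\bI-\lambda^{-1}\CP(\lambda)$ acting on $\bh$) is only a cosmetic difference: your $\bv$ is exactly the paper's $\CC_v(\lambda)(f,\bg)=\CB(\lambda)(f,\bg)-\CU_\Omega(\lambda)\bg$.

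There is, however, a genuine flaw in the justification of the central contraction estimate. You bound $K(\lambda)$ by invoking Lemma \ref{lem:prod} with the multiplier $P'(\eta_0)\eta_0$ measured in $B^{N/q}_{q,1}\cap L_\infty$. First, on an exterior domain $P'(\eta_0)\eta_0$ tends to $P'(\rho_*)\rho_*\neq0$ at infinity and so does not belong to $B^{N/q}_{q,1}(\Omega)$ at all; one must split off the constant, $P'(\eta_0)\eta_0=P'(\rho_*)\rho_*+\CP_1(\tilde\eta_0)$ with $\CP_1(0)=0$. More seriously, what is actually needed is $\|P'(\eta_0)\eta_0\,\dv\bv\|_{B^{s+1}_{q,1}(\Omega)}\leq C\|\bv\|_{B^{s+2}_{q,1}(\Omega)}$, and Lemma \ref{lem:prod} applies only for regularity indices in $[-1+N/q,1/q)$, not $s+1$; moreover, control of the multiplier in $B^{N/q}_{q,1}\cap L_\infty$ cannot suffice once $s>-1+N/q$, since locally the product has no more regularity than the multiplier, i.e.\ only $B^{N/q}_{q,1}$, which is strictly weaker than $B^{s+1}_{q,1}$. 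The correct route --- and the reason the hypothesis $\tilde\eta_0\in B^{N/q+1}_{q,1}(\Omega)$ is imposed --- is the paper's: apply the composite-function estimate (Lemma \ref{lem:Hasp}) to place $\CP_1(\tilde\eta_0)$ in $B^{s+1}_{q,1}(\Omega)$, and then use the Banach-algebra-type bound $\|uv\|_{B^{s+1}_{q,1}}\leq C\|u\|_{B^{s+1}_{q,1}}\|v\|_{B^{s+1}_{q,1}}$, valid because $s+1\geq N/q$ and itself derived from Lemma \ref{lem:prod} after one differentiation. The same repair is needed in your estimates of $\nabla(P'(\eta_0)F_\lambda)$ and of $\eta_0\dv(\CU_\Omega(\lambda)\bg)$. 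With these corrections, the remaining bookkeeping (Neumann series, $\lambda$-derivative bounds, uniqueness via the homogeneous problem) goes through as in the paper, and your closing remark about the dichotomy between the constant and non-constant $\eta_0$ cases is accurate.
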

\begin{proof}
In what follows, we shall show the theorem only in the case \thetag2, because 
the case \thetag1 can be proved  in the same argument. 
In \eqref{eq.8.1}, setting $\rho= \lambda^{-1}(f- \eta_0\dv\bu)$
and inserting this formula into the second equations, we have
\begin{equation}\label{eq.8.3} 
\eta_0\lambda \bu - \alpha\Delta \bu - \beta\nabla\dv\bu
-\lambda^{-1}\nabla(P'(\eta_0)\eta_0\dv\bu) = \bg
-\lambda^{-1}\nabla(P'(\eta_0)f)
\quad\text{in $\Omega$}, \quad \bu|_{\pd\Omega}=0.
\end{equation}
For a while, setting $\bh = \bg - \lambda^{-1}\nabla(P'(\eta_0)f)$, we shall consider
equations:
\begin{equation}\label{eq.8.4}
\eta_0\lambda \bu - \alpha\Delta \bu - \beta\nabla\dv\bu
-\lambda^{-1}\nabla(P'(\eta_0)\eta_0\dv\bu) = \bh \quad\text{in $\Omega$}, 
\bu|_{\pd\Omega}=0.
\end{equation}
Let $\lambda_2$ and $\CU_\Omega(\lambda)$ be the constant and the operator
 given in Theorem \ref{thm:7.1}.
Set $\bu = \CU_\Omega(\lambda)\bh$ and insert this formula into \eqref{eq.8.4}
to obtain
\begin{equation*}\label{eq.8.5}
\eta_0\lambda \bu - \alpha\Delta \bu - \beta\nabla\dv\bu
= (\bI  - \lambda^{-1}\CP(\lambda))\bh
\quad\text{in $\Omega$}, \quad \bu|_{\pd\Omega}=0
\end{equation*}
where we have set
$$\CP(\lambda)\bh = -\nabla(P'(\eta_0)\eta_0\dv\CU_\Omega(\lambda)\bh).$$
We will show that 
\begin{equation}\label{est.p1}
\|\CP(\lambda)\bh\|_{B^s_{q,1}(\Omega)}
\leq C(\rho_*, \|\tilde\eta_0\|_{B^{s+1}_{q,1}(\Omega)})
\|\CU_\Omega(\lambda)\bh\|_{B^{\nu+2}_{q,1}(\Omega)}.
\end{equation}
Here and in what follows, $C(\rho_*, 
\|\tilde\eta_0\|_{B^{s+1}_{q,1}(\Omega)})$ denotes some constant 
depending on $\rho_*$
and $\|\eta_0\|_{B^{s+1}_{q,1}(\Omega)}$. \par
To this end, we shall use Lemma \ref{lem:Hasp} and the fact that
$B^{s+1}_{q,1}(\Omega)$ is a Banach algebra. 
In fact, noting that $N/q \leq s+1$, by Lemma \ref{lem:prod}, we have
\begin{align*}
\|uv\|_{B^{s+1}_{q,1}(\Omega)} &\leq \|(\nabla u)v\|_{B^s_{q,1}(\Omega)}
+\|u(\nabla v)\|_{B^s_{q,1}(\Omega)} +\|uv\|_{B^s_{q,1}(\Omega)} \nonumber \\
&\leq C(\|u\|_{B^{s+1}_{q,1}(\Omega)}\|v\|_{B^{N/q}_{q,1}(\Omega)}
+ \|u\|_{B^{N/q}_{q,1}(\Omega)}\|v\|_{B^{s+1}_{q,1}(\Omega)}
+ \|u\|_{B^s_{q,1}(\Omega)}\|v\|_{B^{N/q}_{q,1}(\Omega)} \nonumber \\
&\leq C\|u\|_{B^{s+1}_{q,1}(\Omega)}\|v\|_{B^{s+1}_{q,1}(\Omega)}.
\label{banach.1}
\end{align*}
\par
To prove \eqref{est.p1},  recalling that $\eta_0 = \rho_* +\tilde\eta_0$, 
we write  $P'(\eta_0)\eta_0 = P'(\rho_*)\rho_* + \CP_1(\lambda)$, 
where we have set
$$\CP_1(r)= P'(\rho_*)r + \int^1_0P''(\rho_* + \theta r)\,d\theta r(\rho_*+r)
$$
with $r = \tilde\eta_0$.  Note that $\CP_1(0)=0$ and 
$\rho_1-\rho_* \leq \tilde \eta_0(x) \leq \rho_2-\rho_*$ as follows 
from \eqref{july.2}.  By Lemma \ref{lem:Hasp}, we have
$$\|\CP_1(\tilde\eta_0)\|_{B^{s+1}_{q,1}(\Omega)}
\leq C\|\tilde\eta_0\|_{B^{s+1}_{q,1}(\Omega)}.
$$
Thus, 
\begin{equation}\label{est.7.1}\begin{aligned}
\|\nabla(P'(\eta_0)\eta_0\dv\bu\|_{B^s_{q,1}(\Omega)}
&\leq |P'(\rho_*)\rho_*|\|\nabla\dv\bu\|_{B^s_{q,1}(\Omega)}
+ \|\CP_1(\tilde\eta_0)\dv\bu\|_{B^{s+1}_{q,1}(\Omega)}\\
&\leq |P'(\rho_*)\rho_*|\|\bu\|_{B^{s+2}_{q,1}(\Omega)}
+ C\|\tilde \eta_0\|_{B^{s+1}_{q,1}(\Omega)}\|\dv\bu\|_{B^{s+1}_{q,1}(\Omega)}.
\end{aligned}\end{equation}
This proves \eqref{est.p1}.  \par

Since $\|\CU_\Omega(\lambda)\bh\|_{B^{s+2}_{q,1}(\Omega)} \leq 
C\|\bh\|_{B^s_{q,1}(\Omega)}$ for any $\lambda \in 
\Sigma_{\epsilon, \lambda_2}$ as follows from Theorem \ref{thm:7.1},
it follows from \eqref{est.p1}  that 
\begin{equation}\label{est.p2}\|\lambda^{-1}\CP(\lambda)\bh\|_{B^s_{q,1}(\Omega)}
\leq |\lambda|^{-1} C(\rho_*, \|\tilde\eta_0\|_{B^{s+1}_{q,1}(\Omega)})
\|\bh\|_{B^{s}_{q,1}(\Omega)}
\end{equation}
Choosing $\lambda_3 \geq \lambda_2$ so large that 
$$\lambda_3^{-1}C(\rho_*, \|\tilde\eta_0\|_{B^{s+1}_{q,1}(\Omega)})\leq 1/2,$$
we see that $\|\lambda^{-1}\CP(\lambda)\bh\|_{B^s_{q,1}(\Omega)} 
\leq (1/2)\|\bh\|_{B^s_{q,r}(\Omega)}$ for any $\lambda \in 
\Sigma_{\epsilon, \lambda_3}$.  Thus,  $(\bI -\lambda^{-1}\CP(\lambda))^{-1}$
exists as an element of $\CL(B^s_{q,1}(\Omega)^N)$ and its operator norm
does not exceed $2$.  
Obvisouly,  $\bu = \CU_\Omega(\lambda)(\bI-\lambda^{-1}
\CP(\lambda))^{-1}\bh$ solves equations \eqref{eq.8.4}
uniquely.  In fact, the uniqueness follows from the existence theorem
of the dual problem. \par
We define an operator $\CB(\lambda)$ by
$$\CB(\lambda)(f, \bg) = 
\CU_\Omega(\lambda)(\bI-\lambda^{-1}
\CP(\lambda))^{-1}
(\bg- \lambda^{-1}\nabla(P'(\eta_0)f). $$
Obvisouly, $\bu = \CB(\lambda)(f, \bg)$
is a solution of equations \eqref{eq.8.3}. 
Let $\CC_m(\lambda)$ be an operator defined by
$$\CC_m(\lambda)(f, \bg) = \lambda^{-1}(f- \eta_0\dv \bu) 
= \lambda^{-1}(f- \eta_0\dv \CB(\lambda)(f, \bg)),
$$
 then $\rho = \CC_m(\lambda)(f, \bg)$ and 
$\bu = \CB(\lambda)(f, \bg)$ are  solutions of equations \eqref{eq.8.1}
for $\lambda \in \Sigma_{\epsilon, \lambda_3}$.  The uniqueness of 
equations \eqref{eq.8.1} follows from the uniqueness of solutions of
equations \eqref{eq.8.4}. In particular, we define 
$\CA_\Omega(\lambda)$ by 
$$\CA_\Omega(\lambda)(f, \bg) = ( \CC_m(\lambda)(f, \bg), 
\CB(\lambda)(f, \bg)).
$$
Obvisously, $\CA_\Omega(\lambda) \in {\rm Hol}\, (\Sigma_{\epsilon, \lambda_5},
\CL(\CH^s_{q,1}(\Omega), \CD^s_{q,1}(\Omega))$ and $(\rho, \bu) = 
\CA_\Omega(\lambda)(f, \bg)$ is a unique solution of equations \eqref{eq.8.1}.
The uniqueness follows from the uniqueness of solutions to \eqref{eq.8.4}.
\par
We now estimate $\CA_\Omega(\lambda)$. 
Employing the similar argument as in the proof of \eqref{est.p1}, 
we have
\begin{equation}\label{est.p4}
\|\nabla(P'(\eta_0)f)\|_{B^s_{q,1}(\Omega)} 
\leq C(\rho_*, \|\tilde\eta_0\|_{B^{s+1}_{q,1}(\Omega)})\|f\|_{B^{s+1}_{q,1}(\Omega)}.
\end{equation}
Using Theorem \ref{thm:7.1} and \eqref{est.p4}, we have
\begin{equation}\label{est.b.1}\begin{aligned}
&|\lambda|\|\CB(\lambda)(f, \bg)\|_{B^s_{q,1}(\Omega)}
+ \|\CB(\lambda)(f, \bg)\|_{B^{s+2}_{q,1}(\Omega)} \\
&\leq C\|(\bI -\lambda^{-1}\CP(\lambda))^{-1}
(\bg - \lambda^{-1}\nabla(P'(\eta_0)f))\|_{B^s_{q,1}(\Omega)} \\
& \leq C(\|\bg\|_{B^s_{q,1}(\Omega)} 
+|\lambda|^{-1}\|P'(\eta_0)f\|_{B^{s+1}_{q,1}(\Omega)})
\\
& \leq C(\rho_*, \|\tilde\eta_0\|_{B^{s+1}_{q,1}(\Omega)})\|(f, \bg)\|_{\CH^s_{q,1}}.
\end{aligned}\end{equation}
Moreover, we have
\begin{equation}\label{est.p5*}\begin{aligned}
\|\CC_m(\lambda)(f, \bg)\|_{B^{s+1}_{q,1}(\Omega)}
&\leq |\lambda|^{-1}(\|f\|_{B^{s+1}_{q,1}(\Omega)} + \|\eta_0\dv\CB(\lambda)(f, 
\bg)\|_{B^{s+1}_{q,1}(\Omega)})\\
&\leq |\lambda|^{-1}C(\rho_*, \|\tilde\eta_0\|_{B^{s+1}_{q,1}(\Omega)})
\|(f, \bg)\|_{\CH^s_{q,1}}.
\end{aligned}\end{equation}
Thus, we have 
$$|\lambda|\|\CA_\Omega(\lambda)(f, \bg)\|_{\CH^s_{q,1}(\Omega)}
+\|\CA_\Omega(\lambda)(f, \bg)\|_{\CD^s_{q,1}(\Omega)}
\leq C(\rho_*, \|\tilde\eta_0\|_{B^{s+1}_{q,1}(\Omega)})\|(f, \bg)\|_{\CH^s_{q,1}}.
$$
\par
We now consider the second assertions of Theorem \ref{thm:8.1}.  
 By the Neumann series expansion, 
we have
$$(\bI - \lambda^{-1}\CP(\lambda))^{-1}
= \bI - \lambda^{-1}\CP(\lambda)(\bI - \lambda^{-1}\CP(\lambda))^{-1}.$$
In view of this formula, we define operators $\CB_v(\lambda)$ and $\CC_v(\lambda)$ by
\begin{align*}
\CB_v(\lambda)\bg &= \CU_\Omega(\lambda)\bg, \\
\CC_v(\lambda)(f, \bg) & = -\CU_\Omega(\lambda)(\lambda^{-1}\nabla (P'(\eta_0)f))
-\CU_\Omega(\lambda)(\lambda^{-1}\CP(\lambda)(\bI - \lambda^{-1}\CP(\lambda))^{-1})
(\bg - \lambda^{-1}\nabla(P'(\eta_0)f)).
\end{align*}
Then, we have $\CB(\lambda)(f, \bg) = \CB_v(\lambda)\bg + \CC_v(\lambda)(f, \bg)$.
By Theorem \ref{thm:7.1}, we see that $\CB_v(\lambda)$ has $(s,\sigma, q, 1)$ properties.
  By Theorem \ref{thm:7.1}, \eqref{est.p1},  \eqref{est.p2},
and the fact that $\|(\bI - \lambda^{-1}\CP(\lambda))^{-1}\|_{\CL(B^s_{q,1}(\Omega))} \leq 2$, 
we have
\begin{equation}\label{est.p5}
\|(\lambda, \lambda^{1/2}\bar\nabla, \bar\nabla^2)\CC_v(\lambda)(f, \bg)\|_{B^s_{q,1}(\Omega)}
\leq C(\rho_*, \|\bar\eta_0\|_{B^{s+1}_{q,1}(\Omega)})|\lambda|^{-1}\|(f, \bg)\|_{B^s_{q,1}}.
\end{equation}
Since $\pd_\lambda \CP(\lambda)\bh = \nabla(P'(\eta_0)\eta_0\dv \pd_\lambda\CU_\Omega(\lambda)
\bh)$, using the similar argument to \eqref{est.7.1}, we have
\begin{align*}
\|\pd_\lambda\CP(\lambda)\bh\|_{B^s_{q,1}(\Omega)}& \leq 
C(\rho_*, \|\tilde\eta_0)\|_{B^{s+1}_{q,1}(\Omega)}|\lambda|^{-1}\|\bh\|_{B^s_{q,1}(\Omega)}, \\
\|\pd_\lambda(\bI - \lambda^{-1}\CP(\lambda))^{-1}\bh\|_{B^s_{q,1}(\Omega)}
& \leq \|(\bI-\lambda^{-1}\CP(\lambda))^{-2}(-\lambda^{-2}\CP(\lambda)
+ \lambda^{-1}\pd_\lambda\CP(\lambda))\bh\|_{B^s{q,1}(\Omega)} \\
& \leq |\lambda|^{-2} C(\rho_*, \|\tilde\eta_0\|_{B^{s+1}_{q,1}(\Omega)})\|\bh\|_{B^s_{q,1}(\Omega)}.
\end{align*}
Since $\CU_\Omega(\lambda)$ has $(s, \sigma, q, 1)$ properties, 
and since we may asuume that 
$\lambda_3 \geq 1$,  we have 
\begin{equation}\label{est.p6}
\|(\lambda, \lambda^{1/2}\bar\nabla, \bar\nabla^2)\pd_\lambda
\CC_v(\lambda)(f, \bg)\|_{B^s_{q,1}(\Omega)}
\leq C(\rho_*, \|\tilde\eta_0\|_{B^{s+1}_{q,1}(\Omega)})
|\lambda|^{-2}\|(f, \bg)\|_{\CH^s_{q,1}}
\end{equation}
for $\lambda \in \Sigma_{\epsilon, \lambda_3}$. Combining  \eqref{est.p5} and \eqref{est.p6}, 
we see that  $(\lambda, \lambda^{1/2}\bar\nabla, \bar\nabla^2)
\CC_v(\lambda)$ has generalized resolvent properties for 
$X=\CH^s_{q,1}(\Omega)$ and 
and $Y =B^{s}_{q,1}(\Omega)$. \par
Since
$$\pd_\lambda\CC_m(\lambda)(f, \bg) = -\lambda^{-2}(f- \eta_0\dv\CB(\lambda)(f, \eta_0\bg))
-\lambda^{-1}\eta_0\dv(\pd_\lambda\CB(\lambda)(f, \eta_0\bg)),
$$
we have
\begin{equation}\label{est.b.2}\begin{aligned}
\|\pd_\lambda\CC_m(\lambda)(f, \bg)\|_{B^{s+1}_{q,1}(\Omega)} &
 \leq |\lambda|^{-2}(\|f\|_{B^{s+1}_{q,1}(\Omega)} 
+ C(\rho_*, \|\tilde\eta_0\|_{B^{s+1}_{q,1}(\Omega)})
\|\CB(\lambda)(f, \bg)\|_{B^{s+2}_{q,1}(\Omega)}) \\
&+ |\lambda|^{-1}C(\rho_*, \|\tilde\eta_0\|_{B^{s+1}_{q,1}(\Omega)})
\|\pd_\lambda \CB(\lambda)(f, \bg)\|_{B^{s+2}_{q,1}(\Omega)}).
\end{aligned}\end{equation}
Recalling that $\CB(\lambda)(f, \bg) = \CU_\Omega(\lambda)\bg 
+ \CC_v(\lambda)(f, \bg)$, by Theorem \ref{thm:7.1}, \eqref{est.p5}, and 
\eqref{est.p6}, we have
\begin{equation}\label{est.b.3}
\|\pd_\lambda \CB(\lambda)(f, \bg)\|_{B^{s+2}_{q,1}(\Omega)} \leq 
C(\rho_*, \|\tilde\eta_0\|_{B^{s+1}_{q,1}(\Omega)})|\lambda|^{-1}
\|(f, \bg)\|_{\CH^s_{q,1}(\Omega)}.
\end{equation}
Putting  \eqref{est.b.1}, \eqref{est.b.2}, and \eqref{est.b.3} gives 
$$\|\pd_\lambda \CC_m(\lambda)(f, \bg)\|_{B^{s+1}_{q,1}(\Omega)}
\leq C(\rho_*, 
\|\tilde\eta_0\|_{B^{s+1}_{q,1}(\Omega)})|\lambda|^{-2}\|(f, \bg)\|_{\CH^s_{q,1}(\Omega)}.
$$
Combining this estimate with \eqref{est.p5*}, we see that $\CC_m(\lambda)$ has
a generalized resolvent properties for $X=Y = B^{s+1}_{q,1}(\Omega)$.
This completes the proof of Theorem \ref{thm:8.1}.
\end{proof}
\section{On the $L_1$ maximal regularity of the Stokes seqmigroup in $\Omega$,
A proof of Theorem \ref{thm:main.1}}
\label{sec:6}
In this section, we consider equations \eqref{Eq:Linear}. 
We first consider equations \eqref{eq.8.1}. For $\nu \in \{s-\sigma, s, s+\sigma\}$, 
let $\CH^\nu_{q,r}(\Omega)$ and $\CD^\nu_{q,r}(\Omega)$ 
be the spaces defined in Theorem \ref{thm:8.1}. 
Let $\CA$ be an operator defined by 
\begin{equation*}\label{semi.1}
\CA(\rho, \bu) = (\eta_0\dv\bu, -\eta_0^{-1}(\alpha\Delta \bu + 
\beta\nabla\dv\bu - \nabla (P'(\eta_0)\rho))
\end{equation*}
for $(\rho, \bu) \in \CD^\nu_{q,r}$.  Then, problem \eqref{eq.8.1} is written as 
\begin{equation*}\label{eq.8.2}
(\lambda \bI + \CA)(\rho, \bu) = (f, \eta_0(x)^{-1}\bg).
\end{equation*}
When $\tilde\eta_0\not\equiv0$, the operation $\eta_0(x)^{-1}\cdot$
is guaranteed by the following lemma. 
\begin{lem}\label{lem:operation}  Assume that $\tilde\eta_0\not\equiv0$.
Let $N-1 < q < 2N$ and $-1+N/q\leq s , 1/q$. Then, for any $\bu \in B^s_{q,1}(\Omega)$, 
there holds 
\begin{equation}\label{op.1}
\|u\eta_0^{-1}\|_{B^s_{q,r}(\Omega)} \leq \rho_*^{-1}\|u\|_{B^s_{q,r}(\Omega)}
+ C\|\tilde\eta_0\|_{B^{N/q}_{q,1}(\Omega)}\|u\|_{B^s_{q,r}(\Omega)}
\end{equation}
for some constant $C>0$ depending on $\rho_*$, $\rho_1$ and $\rho_2$.
\end{lem}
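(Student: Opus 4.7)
The plan is to write $\eta_0^{-1} = \rho_*^{-1} + F(\tilde\eta_0)$ where
\begin{equation*}
F(r) = \frac{1}{\rho_* + r} - \frac{1}{\rho_*} = -\frac{r}{\rho_*(\rho_*+r)},
\end{equation*}
so that $F(0)=0$ and, for $r$ ranging in the compact interval $J=[\rho_1-\rho_*, \rho_2-\rho_*] \subset\subset I=(-\rho_*,\infty)$, $F$ belongs to $BC^\infty(J)$ with bounds depending only on $\rho_*, \rho_1, \rho_2$. Multiplying by $u$ and using the triangle inequality gives
\begin{equation*}
\|u\eta_0^{-1}\|_{B^s_{q,r}(\Omega)} \le \rho_*^{-1}\|u\|_{B^s_{q,r}(\Omega)} + \|u\,F(\tilde\eta_0)\|_{B^s_{q,r}(\Omega)},
\end{equation*}
so the task reduces to showing
\begin{equation*}
\|u\,F(\tilde\eta_0)\|_{B^s_{q,r}(\Omega)} \le C\|\tilde\eta_0\|_{B^{N/q}_{q,1}(\Omega)}\|u\|_{B^s_{q,r}(\Omega)}.
\end{equation*}

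For this I would apply the product estimate of Lemma \ref{lem:prod}, whose hypotheses are met because $N-1<q<2N$ and $-1+N/q\le s<1/q$:
\begin{equation*}
\|u\,F(\tilde\eta_0)\|_{B^s_{q,r}(\Omega)} \le C\|u\|_{B^s_{q,r}(\Omega)}\bigl(\|F(\tilde\eta_0)\|_{B^{N/q}_{q,\infty}(\Omega)} + \|F(\tilde\eta_0)\|_{L_\infty(\Omega)}\bigr).
\end{equation*}
For the $L_\infty$ factor, the pointwise identity $F(\tilde\eta_0) = -\tilde\eta_0/(\rho_*\eta_0)$ together with $\eta_0\ge\rho_1$ and the embedding $B^{N/q}_{q,1}(\Omega)\hookrightarrow L_\infty(\Omega)$ yields
\begin{equation*}
\|F(\tilde\eta_0)\|_{L_\infty(\Omega)} \le (\rho_*\rho_1)^{-1}\|\tilde\eta_0\|_{L_\infty(\Omega)} \le C\|\tilde\eta_0\|_{B^{N/q}_{q,1}(\Omega)}.
\end{equation*}

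For the Besov factor, since $F(0)=0$, $F'\in BC^{\tilde\omega}(J,\BR)$ with $\tilde\omega=\lceil N/q\rceil$, and $\tilde\eta_0$ takes values in $J$, Lemma \ref{lem:Hasp} applied with $\omega=N/q$ gives
\begin{equation*}
\|F(\tilde\eta_0)\|_{B^{N/q}_{q,1}(\Omega)} \le C\bigl(1+\|\tilde\eta_0\|_{L_\infty(\Omega)}\bigr)^{\tilde\omega}\|F'\|_{BC^{\tilde\omega}(J,\BR)}\,\|\tilde\eta_0\|_{B^{N/q}_{q,1}(\Omega)} \le C_{\rho_*,\rho_1,\rho_2}\|\tilde\eta_0\|_{B^{N/q}_{q,1}(\Omega)},
\end{equation*}
where the last bound uses the uniform $L_\infty$ bound on $\tilde\eta_0$ from \eqref{july.2}. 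Since $B^{N/q}_{q,1}(\Omega)\hookrightarrow B^{N/q}_{q,\infty}(\Omega)$, combining the two estimates yields the desired inequality. The argument is essentially routine; the only conceptual step is the splitting $\eta_0^{-1}=\rho_*^{-1}+F(\tilde\eta_0)$, which isolates the constant part so that the nonlinear composition acts only on the small perturbation $\tilde\eta_0$, and I expect no real obstacle beyond verifying the index conditions of Lemmas \ref{lem:prod} and \ref{lem:Hasp} as stated above.
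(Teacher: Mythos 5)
Your proof is correct, but it follows a genuinely different route from the paper's. The splitting $\eta_0^{-1}=\rho_*^{-1}+F(\tilde\eta_0)$ with $F(\tilde\eta_0)=-\tilde\eta_0(\rho_*\eta_0)^{-1}$ is the same as in the paper; the difference lies in how the multiplier $F(\tilde\eta_0)$ is controlled. The paper never invokes the composition Lemma \ref{lem:Hasp} here: it bounds $\|\tilde\eta_0(\rho_*\eta_0)^{-1}\|_{B^{N/q_1}_{q_1,\infty}}$ by real interpolation between elementary $L_{q_1}$ and $W^1_{q_1}$ product estimates (using $\nabla\eta_0=\nabla\tilde\eta_0$ and \eqref{july.2}), which forces the regularity index $N/q_1$ into $(0,1)$ and hence leads to a case split: for $q>N$ it works at the exponent $q$ itself, while for $q\le N$ it introduces an auxiliary exponent $q_1\in(N,qN)$, applies the mixed-exponent Abidi--Paicu--Haspot product law, and concludes via the embedding $B^{N/q}_{q,1}(\Omega)\hookrightarrow B^{N/q_1}_{q_1,\infty}(\Omega)$. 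You instead stay at the exponent $q$ throughout: the product law is the paper's own Lemma \ref{lem:prod} (whose hypotheses are literally those of Lemma \ref{lem:operation}), and the Besov bound $\|F(\tilde\eta_0)\|_{B^{N/q}_{q,1}(\Omega)}\le C\|\tilde\eta_0\|_{B^{N/q}_{q,1}(\Omega)}$ comes from Lemma \ref{lem:Hasp} with $\omega=N/q$. This is shorter and eliminates the case distinction and the auxiliary exponent; the price is that it leans on the composition lemma, whereas the paper's argument only uses interpolation of Lebesgue--Sobolev product bounds. Two small points you should tighten: Lemma \ref{lem:Hasp} asks for $F'\in BC^{\tilde\omega}(I,\BR)$ on an \emph{open} interval $I$ with $J=[\rho_1-\rho_*,\rho_2-\rho_*]\subset\subset I$, not merely on $J$ (trivially arranged by taking $I=(\rho_1-\rho_*-\delta,\rho_2-\rho_*+\delta)$ with $\delta<\rho_1$, since $F'(r)=-(\rho_*+r)^{-2}$ blows up only at $r=-\rho_*$); and the lemma is stated without a domain, so its use on $\Omega$ implicitly requires an extension/truncation argument, though the paper itself applies it on $\Omega$ in the proof of Theorem \ref{thm:8.1}, so this is consistent with the paper's conventions. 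With those remarks, your argument is complete and yields the constant depending on $\rho_*,\rho_1,\rho_2$ exactly as claimed.
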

\begin{proof}Note that 
$\eta_0(x)^{-1} = \rho_*^{-1}-\tilde\eta_0(x)(\rho_*\eta_0(x))^{-1}$. 
If $q_1 > N$, then 
\begin{equation}\label{op.2}
\|\tilde\eta_0(x)(\rho_*\eta_0(x))^{-1}\|_{B^{N/q_1}_{q_1, \infty}(\Omega)} 
\leq C\|\tilde\eta_0\|_{B^{N/q_1}_{q_1, \infty}(\Omega)}.
\end{equation}
In fact,to prove \eqref{op.2},
we use the relation $B^{N/q_1}_{q_1,\infty}(\Omega) = (L_{q_1}(\Omega), W^1_{q_1}(\Omega))_{N/q_1, \infty}$.
Since $\rho_1 < \eta_0(x) < \rho_2$ as follows from \eqref{july.2}, 
we have
$$
\|\tilde\eta_0(x)(\rho_*\eta_0(x))^{-1}\|_{L_{q_1}(\Omega)} \leq (\rho_*\rho_1)^{-1}
\|\tilde\eta_0\|_{L_{q_1}(\Omega)}.
$$
And also, 
$$\|\nabla (\tilde\eta_0(\rho_*\eta_0)^{-1})\|_{L_{q_1}(\Omega)}
\leq \|(\nabla \tilde\eta_0) (\rho_*\eta_0)^{-1}\|_{L_{q_1}(\Omega)}
+ \rho_*^{-1}\|\tilde\eta_0(\nabla \eta_0)\eta_0^{-2}\|_{L_{q_1}(\Omega)}.
$$
Noticing that $\nabla \eta_0=\nabla\tilde\eta_0$ and that 
$|\tilde\eta_0(x)| \leq |\eta_0(x)| + \rho_* \leq \rho_2+\rho_*$, we have
$$\|\nabla (\tilde\eta_0(\rho_*\eta_0)^{-1})\|_{L_{q_1}(\Omega)}
\leq ((\rho_*\rho_1)^{-1}+ \rho_*^{-1}(\rho_2+\rho_*)
\rho_1^{-2})\|\nabla\tilde\eta_0\|_{L_{q_1}(\Omega)}.
$$
Thus, there exists a constant $C$ depending 
on $\rho_*$, $\rho_1$ and $\rho_2$ such that 
\eqref{op.2} holds. \par
Now, we shall prove \eqref{op.1}. 
First, we consider the case where $N/q < 1$. Then, using Abidi-Paicu-Haspot estimate (
\cite[Cor.2.5]{AP07} and \cite[Corollary 1]{H11}), we have
\begin{align*}
\|u\eta_0^{-1}\|_{B^s_{q,r}(\Omega)} &\leq 
(\rho_*^{-1}\|u\|_{B^s_{q,r}(\Omega)} 
+ \|\tilde\eta_0(\rho_*\eta_0)^{-1}\|_{B^{N/q}_{q, \infty}(\Omega)
\cap L_\infty(\Omega)}\|u\|_{B^s_{q,r}(\Omega)})\\
&\leq (\rho_*^{-1}+ C\|\tilde\eta_0\|_{B^{N/q}_{q,1}(\Omega)})\|u\|_{B^s_{q,r}(\Omega)}.
\end{align*}
Next, we consider the case where $N/q \geq 1$.  Since $-1 + N/q \leq s < 1/q$, 
  if we choose $q_1$ in such a way that $N < q_1 < qN$, then 
$s \in (-N/q_1, N/q_1)$ and $s \in (-N/q', N/q_1)$. Thus, since $N/q_1 < 1$, 
using Abidi-Paicu-Haspot estimate
and \eqref{op.2}
we have
\begin{align*}
\|u\eta_0^{-1}\|_{B^s_{q,r}(\Omega)} &\leq 
(\rho_*^{-1}\|u\|_{B^s_{q,r}(\Omega)} + \|\tilde\eta_0(\rho_*\eta_0)^{-1}\|_{B^{N/q_1}_{q_1, \infty}(\Omega)
\cap L_\infty(\Omega)}\|u\|_{B^s_{q,r}(\Omega)}) \\
& \leq (\rho_*^{-1}\|u\|_{B^s_{q,r}(\Omega)} + C\|\tilde\eta_0\|_{B^{N/q_1}_{q_1, \infty}(\Omega)}
+ (\rho_*\rho_1)^{-1}(\rho_*+\rho_2))\|u\|_{B^s_{q,r}(\Omega)}.
\end{align*}
Notice that $1 < q \leq  N <  q_1$. 
By the embedding theorem of the Besov spaces, we have
\begin{align*}
\|\tilde\eta_0\|_{B^{N/q_1}_{q_1, \infty}(\Omega)}
\leq C\|\tilde\eta_0\|_{B^{\frac{N}{q_1}+N\left(\frac{1}{q}-\frac{1}{q_1}\right)}_{q,1}(\Omega)}
=  C\|\tilde\eta_0\|_{B^{N/q}_{q,1}(\Omega)}.
\end{align*}
This completes the proof of Lemma \ref{lem:operation}.
\end{proof}
If we consider the resolvent equation: $(\lambda\bI+\CA)(\rho, \bu) = (f, \bg)$,
then by Thereoms \ref{thm:8.1}, we see that  the resolvent set 
$\rho(\CA) \supset \Sigma_{\epsilon, \lambda_3}$ and the resolvent is written as 
$$(\lambda\bI+\CA)^{-1}(\rho, \bg) = \CA_\Omega(\lambda)(f, \eta_0\bg)$$
for any $\lambda \in \Sigma_{\epsilon, \lambda_3}$ and $(f, \bg)\in \CH^s_{q,1}(\Omega)$. 
Thus,  in view of Theorem \ref{thm:8.1} and   the standard semigroup theorem (cf. Yosida \cite{YK}), 
$\CA$ generates a $C_0$ analytic semigroup $\{T(t)\}_{t\geq 0}$, and 
for any $(\rho_0, \bu_0) \in \CH^s_{q,1}(\Omega)$, $(\rho, \bu) =T(t)(\rho_0, \bu_0)$
is a unique solution of equations \eqref{Eq:Linear} in the case where $F=0$ and $\bG=0$. 
\\
{\bf A proof of Theorem \ref{thm:main.1}.}
Let $(\theta, \bv) = (\lambda+\CA)^{-1}(f, \bg) = \CA_\Omega(\lambda)(f, \eta_0\bg)$. 
By the standard analytic semigroup theory, we see that $T(t)(f, \bg)
 = \CL^{-1}[(\lambda\bI + \CA)^{-1}]
= \CL^{-1}[\CA_\Omega(\lambda)(f, \eta_0\bg)]$.   Let $\CC_m(\lambda)$, $\CB_v(\lambda)$, 
and $\CC_v(\lambda)$ be the operators given in Theorem \ref{thm:8.1}.  Let 
$T_v(t)(f, \bg) = \CL^{-1}[\CC_m(\lambda)(f, \eta_0\bg)]$, 
$T_m^1(t)\bg = \CL^{-1}[\CB_v(\lambda)\eta_0\bg]$, and 
$T^2_v(t)(f, \bg) = \CL^{-1}[\CC_v(\lambda)(f, \eta_0\bg)]$.  By Theorem \ref{thm:8.1},
we have $T(t)(f, \bg) = T_m(t)(f, \bg) + T_v^1(t)\bg + T_v^2(t)(f, \bg)$.
Since $\CB_v(\lambda)$ has  $(s, \sigma, q,1)$ properties in $\Omega$, 
$\CC_m(\lambda)$ has  generalized resolvent properties for $X=\CH^s_{q,1}(\Omega)$ and 
$Y = B^{s+1}_{q,1}(\Omega)$, and $\bar\nabla^2\CC_v(\lambda)$ has generalized 
resolvent properties for $X=\CH^s_{q,1}(\Omega)$ and $Y = B^s_{q,1}(\Omega)^N$. 
Thus, by Propositions \ref{prop:L1} and \ref{prop:L2}, we see that 
$$\int^\infty_0 e^{-\gamma t}\|T(t)(f, \bg)\|_{\CD^s_{q,1}(\Omega)}\,dt
\leq C(\rho_*, \|\tilde\eta_0\|_{B^{N/q}_{q,1}(\Omega)})\|(f, \bg)\|_{\CH^s_q(\Omega)}.
$$
for any $\gamma > \lambda_2$. 
By the Duhamel principle, solutions $(\rho, \bv)$ 
to equations \eqref{Eq:Linear} can be written as
$$(\rho, \bv) = T(t)(\rho_0, \bv_0) + \int^t_0 T(t-\tau)(F(\cdot, \tau), \eta_0\bG(\cdot, \tau))\,d\tau.$$
Thus, by Fubini's theorem we see that
$$\int^\infty_0 e^{-\gamma t}\|\rho(t), \bv(t)\|_{\CD^s_{q,1}(\Omega)}\,dt
\leq C(\rho_*, \|\tilde\eta_0\|_{B^{N/q}_{q,1}(\Omega)})
(\|(\rho_0, \bv_0\|_{\CH^s_{q,1}(\Omega)}
+\int^\infty_0e^{-\gamma t}\|F(t), \bG(t)\|_{\CH^s_{q,1}(\Omega)}\,dt).
$$

Concerning the estimates of the time derivative, we use the equtions:
$\pd_t\rho = -\eta_0\dv\bv + F$ and $\pd_t\bv = \eta_0^{-1}(\alpha \Delta \bv
+ \beta\nabla\dv\bv - \nabla(P'(\eta_0)\rho) + \bG$, and then we have 
\begin{align*}
&\int^\infty_0e^{-\gamma t}\|(\pd_t\rho(t), \pd_t\bv(t))\|_{\CH^s_{q,1}(\Omega)}\,dt \\
&\quad \leq C(\rho_*, \|\tilde\eta_0\|_{B^{N/q}_{q,1}(\Omega)})
(\|(\rho_0, \bv_0\|_{\CH^s_{q,1}(\Omega)}
+\int^\infty_0e^{-\gamma t}\|F(t), \bG(t)\|_{\CH^s_{q,1}(\Omega)}\,dt).
\end{align*}
This completes the proof of Theorem \ref{thm:main.1}.
\\
\\
\\
{\bf Data Availability Statement:} Not applicable 
\\ \\
{\bf Conflicts of Interest:} The authors declare no conflict of interest.



\begin{thebibliography}{99}
\bibitem{AP07} H.~Abidi and M.~Paicu,
{\it Existence globale pour un fluide inhomog\`ene},
Ann. Inst. Fourier (Grenoble) {\bf 57} (3) (2007), 883--917
\bibitem{BCD} Hajer Bahouri, Jean-Yves Chemin, and Rapha\"{e}l Danchin, 
	{\it Fourier analysis and nonlinear partial differential equations},
	Grundlehren der mathematischen Wissenschaften [Fundamental
		Principles of Mathematical Sciences], Vol. 343, Springer, Heidelberg, 2011.
    DOI: 10.1007/978-3-642-16830-7.
\bibitem{BL} J.~Bergh, and J.~L\"ofstr\"om, {\it Interpolation Spaces, An Introduction},
Grundlehren der mathematischen Wissenschaften {\bf 223}, A Series of Comprehensive Studies in
Mathematics, 1976, Springer Verlag Berlin Heidelberg New York,
DOI:10.1007/978--642-66451-9
\bibitem{CMZ}
Q. Chen, C. Miao, and Z. Zhang {\it Global well-posedness for compressible Navier-Stokes equations
with highly oscillating initial velocity}, Comm. Pure Appl. Math., {\bf 63} (9) (2010).
1173--1224. 
\bibitem{CD10} F.~Charve and R.~Danchin, {\it A global existence result for the compressible Navier-Stokes
equations in the critical $L^p$ framework}, Arch. Rational Mech. Anal., {\bf 198} (2010), 233--271.
\bibitem{D00} R.~Danchin, {\it Global existence in critical spaces 
for compressible Navier-Stokes equations}, Invent. Math., {\bf 141} (3) (2000), 
579--614. DOI:10.1007/s002220000078. 
\bibitem{D10}R.~Danchin, {\it On the solvability of the compressible 
Navier-Stokes system in bounded domains}, Nonlinearity, {\bf 23} (2) (2010), 
383--407. DOI:10.1088/0951-7715/23/2/009.
\bibitem{DHMT} R.~Danchin, M.~Hieber, P.~Mucha and 
P.~Tolksdorf, 
{\it Free boundary problems via Da Prato-Grisvard theory},
Preprint: https://arxiv.org/abs/2011.07918. 
\bibitem{DM15} R.~Danchin and P.~B.~Mucha, 
{\it Critical functional framework and maximal regularity in action on systems of incompressible flows}, M\'{e}m. Soc. Math. Fr. (N.S.) {\bf 143} (2015).
	date={2015},
DOI: 10.24033/msmf.451. 
\bibitem{DT22} R.~Danchin and P.~Tolksdorf, {\it Critical regularity issues for the compressible
Navier-Stokes system in nbounded domains}, Math. Ann. {\bf 387} (2023), 1903--1959.
\bibitem{DG75} G.~Da Prato and P.~Grisvard, {\it Sommes d'op\'erateurs lin\'eaires et \'equations diff\'erentielles
op\'rationelles}, J. Math. Pures Appl. (9)  {\bf 54} (3) (1974), 305--387.
\bibitem{DX}
R. Danchin and J. Xu, {\it  Optimal time-decay estimates for the compressible
 Navier-Stokes equations in the critical $L_p$  framework},  Arch. Ration. Mech. Anal., 
{\bf 224} (1) (2017),  53--90. .
\bibitem{ES13} Y.~Enomoto and Y.~Shibata, {\it  On the $\CR$-sectoriality and 
the initial boundary value problem for the viscous compressible fluid flow},
Funk. Ekva. {\bf 56} (2013), 441--505.
DOI: 10.1619/fesi.56.441
\bibitem{G53} D.~Graffi, {\it Il teorema di unicit\'a nella dinamica dei fluidi compressibili}, 
J. Ratinonal Mech. Anal., {\bf 2} (1953), 99--106.
\bibitem{Gaudin} A.~Gaudin, {\it On homogeneous Sobolev and 
Besov spaces on the whole and the half-space}, Hal open science. hal-03850461v5, 2023
\bibitem{H11} B.~Haspot, {\it Well-posedness in critical spaces 
for the system of compressible Navier-Stokes in larger spaces},
J. Differential Equations {\bf251} (8) (2011), 2262--2295.
DOI: 10.1016/j.jde.2011.06.013. 
\bibitem{H} B.~Haspot, 
{\it Existence of global strong solutions in critical spaces for barotropic viscous fluids}, 
Arch. Ration. Mech. Anal., {\bf 202} (2) (2011), 427--460. 

\bibitem{HZ} D~Hoff and K~ Zumbrun.,
{\it Multi--dimensional diffusion waves for the Navier-Stokes equations of
compressible flow.}, Indiana Univ. Math. J., {\bf 44} (2) (1955), 603--676, 1995.


\bibitem{Hor} L.~H\"ormander, {\it Estimates for translation invariant operators in $L^p$ spaces}, Acta Math., {\bf 104} (1960), 93--140. 
DOI: 10.1512/iumj.1953.2.52004. 
\bibitem{I71} N.~Itaya, {\it On the Caucy problem for the system of fundamental equations
describing the movement of compressible viscous fluids}, 
Kodai Math. Sem. Rep, {\bf 23} (1971), 60--120.
\bibitem{I76a} N.~Itaya, {\it On the initial value problem 
of the motion of compressible viscous fluid, especially on the problem of uniqueness},  
J. Math. Kyoto Univ. {\bf 16} (2) (1976), 413--427. DOI:10.1215/kjm/1250522922,
\bibitem{KK02} Y.~Kagei and T.~Kobayashi, {\it On large-time behaviour of solutions to the 
compressible Navier-Stokes equations in the half space in $\BR^3$}, Arch. Rational Mech. Anal., {\bf 165} (2002),
89--159. 
\bibitem{KK05} Y.~Kagei and T.~Kobayashi, {\it Asymptotic behavior of solutions of the compressible
Nvaier Stokes equations on the half space}, Arch. Rational Mech. Anal., {\bf 177} (2005), 231--330, 
Arch. Rational Mech. Anal., {\bf 165} (2002),
89--159
\bibitem{K} M.~Kawashita, 
{\it On global solutions of Cauchy problems for compressible Navier-Stokes equations},
Nonlinear Anal.,{\bf 48} (8) Ser. A: Theory Methods (2002), 1087--1105. 
DOI:10.1016/S0362-546X(00)00238-8. 
\bibitem{KS1} T.~Kobayashi and Y.~Shibata,
{\it  Decay estimates of solutions for the equations of motion of compressible 
viscous and heat-conductive gases in an exterior domain in $\BR^3$}, 
Comm. Math. Phys., {\bf 200} (3) (1999), 621--659. 
\bibitem{KS2}  T. ~Kobayashi and Y. ~Shibata,
{\it  Remark on the rate of decay of solutions to linearized compressible
Navier-Stokes equations},  Pacific J. Math., {\bf 207} (1) (2002), 199--234. 
\bibitem{Kuo23} J.~Kuo, {\it Maximal $L^1$-regularity for the compressible Navier-Stokes
equations}, Preprint arXiv:2403.01424 [math.AP],
DOI:10.48550/arXiv.2403.01424
\bibitem{KSprep} Jou chung Kuo and Y.~Shibata, 
{\it $L_1$ approach to the compressible viscous fluid flows in the half-space},
Preprint arXiv:2311.15219v1 [math.AP] 26 Nov 2023 accepted to be pusblished in Algebra and Analiz. Vol 36, No3. 

\bibitem{LZ}  H.-L.~Li and T. ~Zhang. 
{\it Large time behavior of isentropic compressible Navier-Stokes system in 
$\BR^3$},  Math. Methods Appl. Sci., {\bf 34} (6) (2011), 670--682.

\bibitem{LW} T.-P.~Liu and W.~Wang. 
{\it The pointwise estimates of diffusion wave for the Navier-Stokes systems in
odd multi-dimensions},  Comm. Math. Phys., {\bf 196} (1) (1998), 145--173.

\bibitem{MN80} A.~Matsumura and T.~Nishida, {\it The initial value problem for the equations of 
motion of viscous and heat-conductive gases}, J. Math. Kyoto. Univ., {\bf 20} (1980), 67--104.
\bibitem{MN83} A.~Matsumura and T.~Nishida, {\it Initial boundary value problems for the equations of 
motion of compressible viscous and heat-conductive fluids}, Comm. Math. Phys. {\bf 89} (1983), 445--464.
\bibitem{Mih} S.~G.~Mihlin, {\it On the multipliers of Fourier integrals}, Dokl. Akad. Nauk SSSR {\bf 109} (1) (1956), 701--703. 
DOI:10.1215/kjm/1250522322.
\bibitem{MZ} P.~B.~Mucha and W.~Zajaczkowski, {\it On a $L_p$-estimate for the linearized 
compressible Navier-Stokes equations with the Dirichlet boundary conditions}. J. Diff. Eq. {\bf 186}(2)
 (2002), 377-393
\bibitem{M74} T. Muramatsu, {\it On Besov spaces and Sobolev spaces of generalized functions define on
a general region}, Rubl. RIMS, Kyoto Univ. {\bf 9} (1974), 325--396.
\bibitem{N62}  J.~Nash, {\it Le probl\`eme de Cauchy pour les \'equations differentielles
d'un fluide g\'en\'eral}, Bull. Soc. Math. France, {\bf 90} (1962), 487--497.

\bibitem{OS} T.~Ogawa and S.~Shimizu, {\it Free boundary problems of the
 incompressible Navier-Stokes equations with non-flat initial surface 
in the critical Besov space}, Math. Ann. published online 16 March 2024, 
DOI:10.1007/s00208-024-02823-x
\bibitem{O} M.~Okita,  {\it Optimal decay rate for strong solutions
 in critical spaces to the
 compressible Navier-Stokes equations.}, J. Differential Equations, 
{\bf 257} (10) (2014), 3850–--3867.
\bibitem{S59} J.~Serrin, 
{\it On the uniqueness of compressible fluid motions}, Arch. Rational Mech. Anal., 
{\bf 3} (1959), 271--288.
DOI:10.1007/BF00284180. 
\bibitem{S20} Y.~Shibata, {$\mathcal R$ boundedness, maximal regularity and free boundary
		problems for the Navier Stokes equations}, 
		Mathematical analysis of the Navier-Stokes equations, 
Edts. G.~P.~Galdi and Y.~Shibata, 
Springer Lecture Notes in Math. 2254, (2020), pp. 193--462, Springer, Cham. 
DOI: 10.1007/978-3-030-36226-3$\underbar{\phantom{a}}$3
\bibitem{S22} Y.~Shibata, {\it New thought on Matsumura-Nishida theory in the $L_p$-$L_q$
maximal regularity framework,} J. Math. Fluid Mech. {\bf 24}(3) Paper No. 66 (2022)
\bibitem{S23} Y.~Shibata, {\it Spectral analysis approach to the maximal regularity for the 
Sotkes equations and free boundary problem for the Navier-Stokes equations}, 
RIMS K\^oky\^uroku 2266, Oct. 2023. 
\bibitem{SE18} Y.~Shibata and Y.~Enomoto, {\it Global existence of classical solutions and optimal
decay rate for compressible flows via the theory of semigroups.} In: Handbook of Mathmatical Analysis in Mechanics of 
Viscous Fluids, Springer, Cham, pp. 2085--2181 (2018) 
\bibitem{ST04} Y.~Shibata and K.~Tanaka, {\it On a resolvent problem for the linearized
system from the dynamical system describing the compressible viscous fluid motion},
Math. Methods Appl. Sci., {\bf 27} (2004), 1579--1606. 
DOI:10.1002/mma.518
\bibitem{SW1} Y.~Shibata and K.~Watanabe, {\it Maximal $L_1$-regularity of the Navier-Stokes equations 
with free boundaary conditions via a generalized semigroup theory}, Preprint, arXiv:2311.04444
[math.AP] 8 Nov 2023.
\bibitem{Sol} V.~Solonnikov, {\it Solvability of the initial boundary value problem for the equations of 
motion of a viscous compressible fluid}, J. Sov. Math. {\bf 14} (1980),  1120--1132.
\bibitem{St89} G.~Str\"ohmer, {\it About a certain class of parabolic-hyperbolic 
systems of differential equations}, Analysis {\bf 9} (1989), 1--39.
\bibitem{St90} G.~Str\"ohmer, {\it About compressible viscous fluid flow
in a bounded region}, Pacific J. Math., {\bf 143} (1990)\color{black}, 359--375.
\bibitem{T77} A.~Tani, {\it On the first initial-boundary value problem of compressible 
viscous fluid motion}, Publ. Res. Inst. Math. Sci., {\bf 13} (1977), 193--253.
\bibitem{Tbook78} H.~Triebel, 
{\it Interpolation theory, function spaces, differential operators},
North-Holland Mathematical Library {\bf 18}
North-Holland Publishing Co., Amsterdam-New York, (1978).
\bibitem{Tsuda16} K. Tsuda, {\it On the existence and stability of time periodic solution to the 
compressible Navier-Stokes equation on the whole space}, Arch. Rational Mech. Anal., {\bf 219} (2016), 
637--678. 
\bibitem{Vali83} A.~Valli, {\it Periodic and stationary solutions for compressible Navier-Stokes
equations vis a stability method}, Annali della Scuola Nonrmale Superiore di Pisa, 
Classe di Scienze $4^e$ s\'erie, {\bf 10}(4) (1983), 607--647.
\bibitem{VZ86} A.~Valli and W.~Zajaczkowski, {\it Navier-Stokes equations for compressible 
fluids: global existence and qualitative properties of the solutions in the 
general case}, Commun. Math. Phys., {\bf 103} (1986), 259--296.
\bibitem{VH} L.~R.~Vol\'pert and S.~I.~Hudjaev, {\it On the Cauchy problem for composite systems of 
nonlinear differential equations}, Math. USSR. SB., {\bf 16} (1972), 514--544.
\bibitem{WT} 
Y.~Wang and Z.~Tan, {\it 
Global existence and optimal decay rate for the strong solutions in $H^2$  to the
compressible Navier-Stokes equations}, Appl. Math. Lett., {\bf 24} (11)(2011), 
1778--1784. 
\bibitem{YK} L.~Yosida, {\it Functional Analysis, Sixth Edition}, Classics in Mathematics, Springer, 1995, 
Reprint of the 1980 edition. 

\end{thebibliography}
\end{document}